\documentclass[11pt]{article}

\usepackage{amsmath,amsthm,amssymb}
\usepackage{graphicx}
\usepackage{enumitem}
\usepackage{hyperref}

\title{On ``hard stars'' in general relativity}
\author{Grigorios Fournodavlos and Volker Schlue}

\numberwithin{equation}{section}

\usepackage[raggedleft,small,bf]{titlesec}
\addtolength{\hoffset}{-1cm}
\addtolength{\textwidth}{2cm}

\newcommand{\ud}{\mathrm{d}}

\newcommand{\rhot}{\tilde{\rho}}
\newcommand{\mt}{\tilde{m}}

\theoremstyle{plain}
\newtheorem{prop}{Proposition}[section]
\newtheorem{lemma}[prop]{Lemma}
\newtheorem{cor}[prop]{Corollary}
\newtheorem{conj}[prop]{Conjecture}

\newtheorem{rem}{Remark}[section]

\allowdisplaybreaks[1]
\setcounter{tocdepth}{1}

\begin{document}

\maketitle

\begin{abstract}
  We study spherically symmetric solutions to the Einstein-Euler equations which model an idealized relativistic neutron star surrounded by vacuum. These are barotropic fluids with a free boundary, governed by an equation of state which sets the speed of sound equal to the speed of light. We demonstrate the existence of a 1-parameter family of \emph{static} solutions, or ``hard stars,'' and describe their stability properties:
  First, we show that \emph{small} stars are a local minimum of the mass energy functional under variations which preserve the total number of particles. In particular, we prove that the second variation of the mass energy functional controls the ``mass aspect function.''
  Second, we derive the linearisation of the Euler-Einstein system around small stars in ``comoving coordinates,'' and prove a uniform boundedness statement for an energy, which is exactly at the level of the variational argument. Finally, we exhibit the existence of time periodic solutions to the linearised system, which shows that energy boundedness is optimal for this problem.
\end{abstract}

\tableofcontents

\section{Introduction}

A description of the two body problem in general relativity which goes beyond approximations is challenging in part because fairly little is known about the dynamics of \emph{extended} bodies.\footnote{Many of the existing approximation schemes in the physics literature are based on the approximation of bodies by point masses; see e.g.~\cite{Blanchet:14} and references therein.}
A physically relevant example of such a body is a neutron star.\footnote{Binary systems of neutron stars are -- besides black holes -- the primary objects of interest in gravitational wave astronomy; see e.g.~the recent \cite{ligo:pulsar}.}  While the question of the correct equation of state has been subject to much debate,\footnote{Specifically for the considerations that have motivated \textsc{Christodoulou}'s model, see e.g.~\cite{zeldovich,friedman}.} we focus here on an idealized description in the context of \textsc{Christodoulou}'s two-phase model \cite{Ch:95:I}, and are interested in the dynamical stability of even \emph{one} such body in \emph{spherical symmetry}.

In this model the body is described by an \emph{irrotational barotropic fluid} governed by an equation of state which sets the speed of sound equal to the speed of light. This so-called ``hard phase'' can be thought of as the closest analogue of a relativistic fluid to a classical \emph{incompressible} fluid.\footnote{This analogue has guided the earliest investigations of this model, see~\cite{Buchdahl:68}.} When surrounded by vacuum these bodies have a boundary where the pressure vanishes and the density has a discontinuity.

Presently the local well-posedness of the resulting Einstein-Euler free-boundary problem is an \emph{open} question, at least in the absence of any symmeries.\footnote{A priori estimates were recently obtained by \textsc{Oliynyk} \cite{Oliynyk:17}, and \textsc{Ginsberg} \cite{Ginsberg:18}. The corresponding \emph{non-relativistic} problem, namely the well-posedness of an \emph{incompressible} fluid with free boundary, was solved by \textsc{Lindblad} \cite{Lindblad:05}.}  However in \emph{spherical symmetry} the relevant existence and uniqueness results have been obtained by \textsc{Kind-Ehlers} \cite{Kind:93}, and \textsc{Christodoulou} \cite{Ch:96:II}, who also addressed much more difficult questions related to \emph{gravitational collapse}, namely the continuation and termination of the boundary in the context of a genuine two phase model with \emph{phase transitions} \cite{Ch:96:III, Ch:16:IV}.

In this note we show \textbf{(in Section~\ref{sec:existence})} that this model permits a 1-parameter family of non-trivial \emph{time-independent} solutions, which represent stars in hydrostatic equilibrium. We conjecture that these ``solitons,''  when \emph{``small,''} are dynamically stable, namely that under small perturbations of the initial data these stars neither ``collapse under their own weight,'' nor ``disperse;''  see Conjecture~\ref{conj:stability} below.\footnote{Another reason for a possible breakdown of the solution is the \emph{formation of shocks} \cite{Ch:07}. However in this model the characteristics of the fluid coincide with the null geodesics of the metric, and are thus \emph{fixed}. While in spherical symmetry this prevents the characteristics from crossing it is still plausible that they accumulate after a number of reflections at the boundary.} In spherical symmetry this is a subtle issue,  because \emph{there is no mechanism by which internal energy can be radiated away}: the exterior of the star is a vacuum region, hence always isometric to a Schwarzschild solution (with a fixed mass), and has no gravitational degrees of freedom.

Indeed, based on these considerations one could expect quite the opposite: The physical boundary condition of vanishing pressure induces essentially a reflecting boundary condition for the internal oscillations, and the system then bares some resemblance to the situation in \textsl{Anti-de Sitter (AdS) with reflecting boundary conditions}, for which \textsc{Moschidis} recently showed an \emph{instability}, namely the formation of black holes from arbitrarily small perturbations of AdS data (in the context of the spherically symmetric Einstein--massless Vlasov system) \cite{Moschidis:17, Moschidis:18}; see also the numerical studies for the Einstein--scalar field system, initiated by \textsc{Bizo\'n} and \textsc{Rostworowski} \cite{Bizon:11}.

The property that supports the idea that hard stars are stable, is then the following: We show \textbf{(in Section~\ref{sec:variation})} that the equilibrium configuration of a \emph{small} star lies in a \emph{local minimum of the mass energy functional for fixed total particle number}. Both quantities, the total energy and the number of particles are \emph{conserved} in the evolution, but the norm that is controlled by the second variation is not strong enough to provide an immediate \emph{orbital stability} result following \textsc{Grillakis-Shatah-Strauss} \cite{Grillakis:90}.\footnote{An example of a self-gravitating soliton which was proven to be orbitally stable in spherical symmetry, with a related approach due to \textsc{Cazenave} and \textsc{Lions}, are the ``galactic clusters'' in the context of the classical Vlasov-Poisson system studied in \cite{Raphael:11, Raphael:12}.} In fact, the second variation just \emph{fails} to give control on the relevant quantities in \emph{bounded variation}, and thus lies below the critical norm for the existence theory for the Einstein-scalar field system in spherical symmetry \cite{Ch:93}. However, we prove \textbf{(in Section~\ref{sec:variation:second})} that the second variation controls (pointwise) the so-called ``mass aspect'' function at the center, and thus gives control precisely of the quantity relevant to a \emph{continuation criterion} for this system \cite{Ch:93}.\footnote{We elaborate below, and in Section~\ref{sec:existence} on the relation to the Einstein-scalar field model.} This gives hope that despite the weakness of the norm a full non-linear stability result can still be obtained by a continuation argument.

Finally \textbf{(in Section~\ref{sec:linear})} we study the linearized system in spherical symmetry and prove a uniform boundedness result for an energy which turns out to be \emph{precisely} at the level of norm controlled by the second variation of the static solutions. Moreover, we exhibit the existence of time periodic solutions, which shows that boundedness (as opposed to decay) of the energy is indeed optimal. We emphasize that this is achieved by a reduction of the system to a ``master equation'' for a single quantity. We expect that these linear oscillations can be integrated to ``non-linear breathers,''\footnote{In a related setting, the integration of linear to ``non-linear hair'' was carried out by \textsc{Chodosh} and \textsc{Shlapentokh-Rothman}, who showed the existence of periodic solutions to the Einstein--Klein-Gordon equations \cite{Chodosh:15}. See also \cite{Kichen:08}.} which would show that an \emph{orbital} stability (as opposed to \emph{asymptotic} stability) result is in fact optimal.


\subsection{Description of the fluid model}

The matter model considered in this note is a \emph{perfect fluid} whose energy-momentum tensor is given by
\begin{equation}
  T=\rho\: u\otimes u+p\,(g+u\otimes u)
\end{equation}
where $\rho$ is the energy density, $u$ the fluid velocity (a \emph{unit} time-like vectorfield), and $p$ the pressure.
Moreover $g$ is the spacetime metric, which is \emph{an unknown} coupled to the fluid variables via the Einstein equations, schematically denoted by $E(g)=8\pi T$, where $T$ enters on the right hand side; see \cite{Ch:95:I}. By virtue of the Bianchi identities, the Einstein equations imply the conservation laws
\begin{equation}\label{eq:nabla:T}
  \nabla\cdot T=0
\end{equation}
which are the \emph{relativistic Euler equations}. This system is not closed, unless supplemented by a thermodynamic equation of state. For \emph{barotropic} fluids the missing equation is provided by a direct functional dependence of the  pressure on the density,
\begin{equation}\label{eq:f}
  p=f(\rho)
\end{equation}
and if $f$ is strictly increasing, one defines
\begin{equation}\label{eq:F}
 F=\int_0^p\frac{\ud p}{f^{-1}(p)+p}
\end{equation}
The Euler equations can then be expressed in terms of the future-directed time-like vectorfield
\begin{equation}
  V=e^F u\,;
\end{equation}
see e.g.~(1.13a) and (1.13c) in \cite{Ch:95:I}.


For an \emph{irrotational} fluid $V$ is the gradient of a function $\phi$,
\begin{equation}\label{eq:V:phi}
  V^\mu=-g^{\mu\nu}\partial_\nu \phi
\end{equation}
and the Euler equations \eqref{eq:nabla:T} reduce, in general, to a \emph{non-linear} wave equation of the form
\begin{equation}\label{eq:wave:G}
  \nabla^\mu(G(\lVert \ud \phi \rVert)\partial_\mu \phi)=0
\end{equation}
where $G(\sigma)=(\rho+p)/\sigma^2$.
Note since $V$ is time-like, $\phi$ is always increasing towards the future, hence we can think of $\phi$ as  a \emph{time function}.

In \textsc{Christodoulou}'s two phase-model the fluid is in a \emph{hard phase} if the density is above a critical density $\rho_0$, and in a \emph{soft phase} if the density falls below it. The former is characterised by a \emph{linear} relationship of the pressure and the density in \eqref{eq:f}, and in the latter the pressure vanishes identically.\footnote{The ``soft phase'' thus coincides with the so-called \emph{dust} model, which by itself is an insufficient model for gravitational collapse \cite{Ch:84}. More recently, also an ``elastic'' model has been considered, which consists only of the ``hard phase'', but allows the pressure to become \emph{negative}; see \cite{costa:18}.}
In summary,

\begin{description}
\item[Two-phase model:] We consider a barotropic fluid which satisfies
  \begin{equation} \label{eq:model}
  p=
  \begin{cases}
    \rho-\rho_0 & \rho\geq \rho_0 \\
    0 & \rho<\rho_0
  \end{cases}
\end{equation}

\end{description}

In this note however, all solutions have the property that $\rho\geq \rho_0$. (The exterior of the star can be viewed as a trivial solution to the soft phase with $\rho=0$.)
Moreover, by a choice of units we can take 
\begin{equation}\label{eq:units}
  \rho_0=1\,.
\end{equation}

The important implication of the choice \eqref{eq:model}, for $\rho>\rho_0$, is that the non-linear wave equation \eqref{eq:wave:G} reduces to the \emph{linear wave equation}\footnote{Of course, given that the Einstein-Euler equations are a \emph{system} for $(g,\phi)$, equation \eqref{eq:wave:intro} is not truly linear. However, for \emph{fixed} $g$, this is a linear equation for $\phi$, which of course plays an important role for the linearisation of the system.}
\begin{equation}\label{eq:wave:intro}
  \Box_g \phi=0\,.
\end{equation}

In the resulting theory, which has been formulated and studied in \cite{Ch:95:I,Ch:96:II,Ch:96:III,Ch:16:IV},
the ``hard phase'' is thus similar to the \emph{massless scalar field model in spherical symmetry} (see \cite{Ch:86,Ch:93});
except that the solution $\phi$ is subject to the constraint
\begin{equation}\label{eq:d:phi}
  \lVert \ud \phi \rVert := \sqrt{-g^{\mu\nu}\partial_\mu\phi\partial_\nu \phi}\geq 1
\end{equation}
and that the energy momentum tensor differs as follows:\footnote{In particular the additional term in the energy momentum tensor \emph{breaks the scale invariance} of the system which is crucially exploited in the existence theory for bounded variations solutions in \cite{Ch:93}.}
\begin{equation}
\label{Tmunu}  T_{\mu\nu}=\partial_\mu\phi\,\partial_\nu\phi+\frac{1}{2}\bigl(\lVert \ud \phi\rVert^2-1\bigr)g_{\mu\nu}
\end{equation}

\begin{rem}
  We emphasize that only for the equation of state \eqref{eq:model} the wave equation for $\phi$ is ``linear,'' which is related to the fact that in the hard phase the speed of sound equals the speed of light, $c_s=1$, where \begin{equation}c_s^2=\frac{\ud p}{\ud \rho}\,.\end{equation}
  Indeed, even for an equation of state which maintains linearity
  \begin{equation}\label{eq:model:c}
    \rho=c_s^2(\rho-\rho_0)\,,\qquad 0<c_s<1\,,
  \end{equation}
  but introduces a second characteristic surface, corresponding to a speed of sound which is strictly smaller than the speed of light,
  the corresponding wave equation \eqref{eq:wave:G} is \emph{non-linear}:
  Using \eqref{eq:F} we compute
  \begin{equation}
    G( \| \ud \phi \| ) = \| \ud \phi \|^\frac{1-c_s^2}{c_s^2}\,.
  \end{equation}
  As opposed to \eqref{eq:wave:intro} we expect shocks to develop for solutions to \eqref{eq:wave:G}, and thus while physically more realistic, \eqref{eq:model:c} provides a less tenable model for the stability analysis discussed below.
\end{rem}

We refer the reader to Section~3 of \cite{Ch:95:I} for a formulation of the equations in various coordinate systems, to Section~6 of \cite{Ch:95:I} for the basic theorems regarding the Cauchy problem, and to Section~2 of \cite{Ch:96:II} for the relevant local existence theory of the free-boundary problem.

\subsection{Summary}

In this note we study spherically symmetric solutions $(\mathcal{M},g,\phi)$ to the \textsl{two phase model}, namely the Einstein-Euler equations with a barotropic equation of state \eqref{eq:model}. We are interested in ``compactly supported'' solutions with the following properties:
\begin{enumerate}[label=(\alph*)]
\item In the quotient $\mathcal{Q}=\mathcal{M}/\mathrm{SO}(3)$ there is a time-like curve $\mathcal{B}$ which separates the ``interior'' from the ``exterior''. The interior contains the central geodesic $\Gamma$, the center of symmetry; c.f.~Fig.~\ref{fig:Q}.
\item  In the interior the fluid is irrotational and the velocity potential $\phi$ satisfies \eqref{eq:d:phi}, or equivalently $\rho\geq 1$.
\item On the boundary $\mathcal{B}$ the physical boundary condition $p=0$ is satisfied.
The density has a discontinuity across $\mathcal{B}$: its induced value is $\rho=1$ from the interior, and $\rho=0$ from the exterior.
  \item In the exterior $\rho\equiv 0$, and $g$ is isometric to an exterior domain of the Schwarzschild solution with a fixed mass $M>0$.
\end{enumerate}

\begin{figure}[tb]
  \centering
  \includegraphics{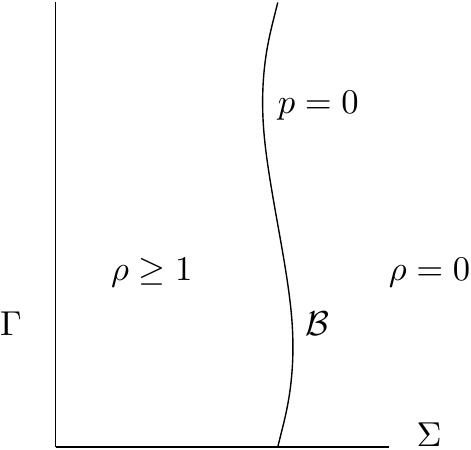}
  \caption[Spherically symmetric stars.]{Interior and exterior of the star seperated by the boundary $\mathcal{B}$ in the quotient manifold $\mathcal{Q}$.}
  \label{fig:Q}
\end{figure}

We demonstrate the existence of a 1-parameter family of \emph{static} solutions with these properties:\footnote{In this paper we call the static solutions ``hard stars.'' However beyond the scope of this paper, a solution to the Einstein-Euler equations with the properties (a-d)  may be called a ``dynamical hard star'', which explains our occasional use of the term ``static hard stars.''}

\begin{description}
\item[Existence of ``\textsl{small} hard stars'' (Prop.~\ref{prop:exist:stars})] {\itshape For any $0<R\ll \sqrt{\frac{3}{4\pi}}$ there exists a \emph{static} solution to the two phase model with the properties~(a-d) above, where $M=\mathcal{O}(R^3)$.}
\end{description}

The proof, which is based on a well-known reduction to the \textsc{Tolman-Oppenheimer-Volkoff} (TOV) equations is carried out in Section~\ref{sec:existence}, where also a more detailed description of their qualitative properties can be found. The earliest discussion of these solutions appears to be given by \textsc{Buchdahl} \cite{Buchdahl:68}.

\begin{rem}
  The above star solutions are ``small'' both in terms of the radius $R$ of the boundary, \emph{and the central density} $\rho\rvert_\Gamma$, in the sense that $0\leq \rho\rvert_\Gamma-1=\mathcal{O}(R^2)$. In the study of static fluid bodies, as initiated by \textsc{Rendall-Schmidt} \cite{Rendall:91} and \textsc{Makino} \cite{Makino:98}, the equilibria are typically parametrized by their central density.
  It is expected that the existence of static stars with \emph{arbitrarily large} central density (yet \emph{finite} extent $R>0$, and total mass $M>0$) can be established, for example by adapting the framework  of \textsc{Ramming-Rein} \cite{Rein:13}, or \textsc{Heinzle-Uggla} \cite{Heinzle:03} to the equation of state \eqref{eq:model}. Their properties would be of great interest with regard to the following conjecture, which is expected to be \emph{false} for stars with \emph{large} central density $\rho\rvert_\Gamma \gg 1$.\footnote{An exciting recent result of \textsc{Had\v zi\'c-Lin-Rein} \cite{Hadzic:18} confirms the scenario that with increasing central density a growing mode instability appears in the linearised Euler-Einstein system around such a star.}
\end{rem}

We expect that in the context of spherical symmetry \emph{small} hard stars are orbitally stable.\footnote{A similar conjecture can of course be formulated outside spherical symmetry. In fact, in the absence of symmetries the emission of gravitational waves may provide an additional stability mechanism on the basis of which one might expect an \emph{asymptotic stability} result. However, not even the \emph{local} existence theory for this problem is available at this stage, and the \emph{a priori} possible formation of shocks may provide another serious obstacle.}

\begin{conj} [Orbital stability of ``\emph{small} hard stars'']
  \label{conj:stability} Consider initial data --- for the Einstein-Euler equations with \eqref{eq:model} ---  which have the properties $(b-d)$ above, and are $\epsilon$-close to a static hard star solution in a suitable norm. Then the solution exists globally in time with the properties $(a-d)$ above and remains $\epsilon$-close to the static star.
\end{conj}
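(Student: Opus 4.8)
The plan is to run a Lyapunov-type stability argument built on the two conserved quantities of the evolution --- the total mass energy $M$ (read off from the Schwarzschild exterior) and the total particle number $N$ --- in the spirit of \textsc{Grillakis-Shatah-Strauss} \cite{Grillakis:90}, but substituting a \emph{continuation criterion} for the coercivity that is missing at the critical (bounded variation) level. Concretely, given initial data with properties $(b-d)$ that are $\epsilon$-close to a fixed small static star $(\mathcal{M}_0,g_0,\phi_0)$ of radius $R\ll\sqrt{3/4\pi}$, I would first invoke the spherically symmetric local existence theory for the free-boundary problem (\textsc{Kind-Ehlers} \cite{Kind:93}, \textsc{Christodoulou} \cite{Ch:96:II}) to obtain a maximal development with a timelike boundary curve $\mathcal{B}$ and $\rho\geq 1$ in the interior, on which both $M$ and $N$ are constant and $\epsilon$-close to their static values. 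The variational result of Section~\ref{sec:variation} --- that the static star is a strict local minimum of the mass energy functional at fixed $N$, with the second variation controlling the relevant weak norm --- then furnishes, by the standard conservation/convexity argument, an \emph{a priori} bound: the solution stays in a small sublevel set, so the weak norm --- in particular the mass aspect function at the center, by Section~\ref{sec:variation:second}, as well as $\rho\rvert_\Gamma-1$ --- remains $\mathcal{O}(\epsilon)$ for as long as the solution exists.

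The second step is to upgrade this \emph{a priori} control into \emph{global} existence with properties $(a-d)$. Here one exploits that the hard phase is governed by the \emph{linear} wave equation $\Box_g\phi=0$ and is thus structurally close to the Einstein--massless-scalar-field system in spherical symmetry, for which \textsc{Christodoulou} \cite{Ch:93} showed that boundedness of the mass aspect function at the center is exactly the obstruction to first singularity formation. Since our variational bound controls precisely this quantity, one expects no singularity to form along $\Gamma$; combined with the fixed Schwarzschild exterior (no gravitational degrees of freedom outside $\mathcal{B}$) this should rule out collapse, while the smallness of the mass aspect and of $\rho\rvert_\Gamma-1$ should prevent dispersion. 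One then sets up a continuation argument: let $T_\ast$ be the supremum of times up to which the solution exists with $(a-d)$ and with weak norm below a threshold $\delta(\epsilon)$; the local theory together with the conserved-quantity bound should allow extension past $T_\ast$, forcing $T_\ast=\infty$. The time-\emph{independent} nature of this bound is consistent with, and in fact predicted by, the linear analysis of Section~\ref{sec:linear}, which shows energy boundedness (not decay) is optimal --- hence \emph{orbital} rather than \emph{asymptotic} stability is the correct target, presumably realized through the full nonlinear analogues of the time-periodic linear solutions.

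The main obstacle is the mismatch of norms: the coercivity supplied by the second variation ``just fails'' to reach bounded variation, the natural regularity class for the Einstein-scalar-field existence theory, so one cannot simply quote \cite{Ch:93} but must show that control of the mass aspect function \emph{alone}, propagated by the conserved functionals, suffices to run the continuation argument in this setting, adapting Christodoulou's BV machinery to the modified energy-momentum tensor \eqref{Tmunu} (which, crucially, breaks the scale invariance that the BV theory exploited). A second, related difficulty is the \emph{free boundary} $\mathcal{B}$: one must show it remains timelike and regular for all time, that $p\geq 0$ with the prescribed $\rho=1$ jump, and --- most delicately --- that the null characteristics, which here coincide with the fixed null geodesics of $g$, do not accumulate after infinitely many reflections at $\mathcal{B}$, the scenario flagged in the introduction as the analogue of the AdS reflecting-wall instability of \textsc{Moschidis} \cite{Moschidis:17}. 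Establishing that smallness of the star genuinely suppresses this turbulent cascade of reflected waves, rather than merely postponing it, is where the argument is expected to be hardest, and is why the statement is recorded only as Conjecture~\ref{conj:stability}.
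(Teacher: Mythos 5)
The statement you are trying to prove is recorded in the paper as Conjecture~\ref{conj:stability} precisely because no proof is known; the paper itself offers only partial results in its direction (the variational minimality of Section~\ref{sec:variation}, the mass-aspect bound of Proposition~\ref{moverrprop}, and the linear theory of Section~\ref{sec:linear}). Your proposal is a faithful articulation of the strategy the authors themselves sketch, and you honestly flag the obstacles --- but those obstacles are genuine gaps, not technicalities, and your argument does not close them.

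The most concrete gap is in your first step. The ``standard conservation/convexity argument'' requires that the conserved functional $M$ (at fixed $N$) be coercive, near the static star, in a norm for which (i) the local existence theory holds and (ii) the neighborhood of coercivity is itself invariant. Here Proposition~\ref{ddotMenergy} shows $\ddot{M}$ controls only the weak energy $\int [\dot{r}^2/r_0^2 + (\partial_\phi\dot{r})^2 + r_0^4(\partial_\chi\dot{r})^2]\,\ud\chi$, which (as the paper notes) does not even control $\|\dot{r}\|_{L^\infty}$ near the center and lies below the bounded-variation threshold of \cite{Ch:93}. Without coercivity in a well-posedness norm, the conclusion ``the solution stays in a small sublevel set'' does not follow: one cannot guarantee the solution remains in the region where the second variation is positive, so the argument is circular. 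Relatedly, Proposition~\ref{moverrprop} bounds the \emph{first variation} $(m/r^{1+\epsilon})\dot{}$ pointwise by $\ddot{M}$ --- a statement about the linearisation along a family through the static star --- and does not by itself yield an a priori bound on the nonlinear quantity $m/r$ along an arbitrary perturbed evolution; upgrading it would require a genuinely nonlinear coercivity estimate that is not established. Your second step then quotes the continuation criterion of \cite{Ch:93}, but that criterion is proven for the Einstein--scalar-field system, not for the hard-phase free-boundary problem: the energy-momentum tensor \eqref{Tmunu} breaks the scale invariance exploited there, and the free boundary $\mathcal{B}$ (its continued timelike character, the reflection of characteristics off it, and the possible accumulation of characteristics after infinitely many reflections --- the Moschidis-type scenario) has no analogue in \cite{Ch:93}. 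Each of these is an open problem; until they are resolved the statement remains a conjecture, and your proposal should be read as a research programme rather than a proof.
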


\subsection{Main results }

This paper contains a variational characterisation of small hard stars, and infers an a priori bound on the mass aspect function.
Moreover our analysis of the linearised system shows that the linearised dynamics can be reduced to a ``master equation'' which admits time periodic solutions, and for which the associated energy remains uniformly bounded.

\subsubsection{Variational properties}

In spherical symmetry, an important role is played by the \textsc{Hawking} mass $m$, which is associated to each sphere $q\in\mathcal{Q}$ of radius $r(q)$; see Section~\ref{sec:existence} below, and Section~3 in \cite{Ch:95:I}. It turns out that by virtue of the boundary condition $(c)$ above the Hawking mass is \emph{constant} on the boundary $\mathcal{B}$, and represents the ``total mass energy'' of the star:
\begin{equation}
  M=m\rvert_{\mathcal{B}}
\end{equation}

In fact, in ``comoving coordinates'' $(\phi,\chi)$  the Hawking mass satisfies the equations
\begin{subequations}
  \begin{align}
\frac{\partial m}{\partial \phi}&=-4\pi r^2p\frac{\partial r}{\partial \phi} \label{eq:m:phi}\\
    \label{eq:m:chi}\frac{\partial m}{\partial \chi}&=4\pi r^2 \rho\frac{\partial r}{\partial \chi}
  \end{align}
\end{subequations}
Recall that $\phi$ always satisfies \eqref{eq:d:phi}, and is thus a \emph{time function} whose level sets can be viewed as a curve of ``simultaneous events'' relative to the observers defined by the flow lines of the fluid; the latter are assigned constant values of $\chi$.

Thus at a given time $\phi$ we can view $M$ as a functional of the induced data $(\rho,r)$:
\begin{equation}\label{eq:M:intro}
  M[\rho,r](\phi)= \int_{\Sigma_\phi} \rho \: 4\pi r^2 \ud r
\end{equation}

As already remarked above we have $\partial_\phi M=0$ because $p=0$ on $\mathcal{B}$. Another fundamental conserved quantity is the \emph{total particle number} $N(\phi)$.

We show that the above static solutions can be characterised as follows:
\begin{description}
\item[Variational properties of ``small hard stars''] \textbf{(Prop.~\ref{prop:critical} and Prop.~\ref{moverrprop})}
  {\itshape \emph{Small} hard stars are local minimizers of the mass energy functional $M$ under variations of the data which preserve the total number of particles $N$. Moreover the second variation of the mass functional controls the variation of the mass aspect function:}
    \begin{equation*}
      \lvert \,\Bigl(\frac{m}{r}\Bigr)\dot{} \,\rvert \leq  C r^\frac{1}{2}\:\ddot{M}
    \end{equation*}
\end{description}

More precise statements are given in Section~\ref{sec:variation}. Variational characterisations of ``stars in hydrostatic equilibrium in general relativity'' are not new, and were first given by \textsc{Harrison, Thorne, Wakano,  and Wheeler}; see Chapter~3 in \cite{Thorne:65}.
The novel observation here is the stated inequality whose significance mainly lies in its relevance to the continuation criterion proven in \cite{Ch:93}. It is precisely in the proposed ``hard phase'' model that the Einstein-Euler equations reduce in the irrotational case to a system of equations which is formally similar to the Einstein -- scalar field system. For the latter \textsc{Christodoulou} showed in \cite{Ch:93} that if \emph{the mass aspect $m/r$ is small} at the center, then $\mathrm{C}^1$ solutions to the Einstein -- scalar field system can be extended locally; see Section~5 therein.


\subsubsection{Linearised system}

While the Einstein-Euler equations in spherical symmetry form a \emph{system of equations} (consisting of the Hessian equations for $r$, and the conservation laws \eqref{eq:nabla:T}, see e.g.~Section~\ref{sec:existence} below) we show \textbf{(in Section~\ref{sec:linear:equations})} that essentially due to direct link between the linearised mass $\dot{m}$ and radius $\dot{r}$ in the neighbourhood of small stars (c.f.~Corollary~\ref{cor:dot:m}) the system can be reduced to a single ``master equation'' for $\dot{r}$.

\begin{description}
\item[Decoupled linearised equations (Prop~\ref{prop:mastereq} and Lemma~\ref{lemma:H})]
  {\itshape Consider the linearisation of the Einstein-Euler equations in comoving coordinates $(\phi,\chi)$ around a small star solution $(r_0,\rho_0)$, for fixed particle number N. Then the linearised radius $\dot{r}$ satisfies a \emph{decoupled} wave equation with \emph{mixed boundary conditions} of the form
    \begin{equation*}
      -\partial_\phi^2\dot{r}=H\dot{r}\,,\qquad \partial_\chi \dot{r}\rvert_{\mathcal{B}}=f\,\dot{r}\rvert_{\mathcal{B}}\,,
    \end{equation*}
    where $H$ is a perturbation of $H_0:=-\partial_{r_0}^2-(2/r_0)\partial_{r_0}+2/r_0^2$.
    Moreover, all other linearised quantities can be inferred from explicit formulas relating them to $\dot{r}$.
  }
\end{description}

The structure of the decoupled equation allows us to derive directly a uniform boundedness statement for the associated energy \textbf{(Section~\ref{sec:linear:stability})}. It turns out that the natural energy that arises in the context of the master equation for $\dot{r}$ is exactly at the same level as the norm controlled by the variational argument discussed above. Thus we give two independent proofs of the uniform boundedness statement, and in particular of the pointwise control of the mass aspect function.

\begin{description}
\item[Linear \emph{orbital} stability of small hard stars (Prop.~\ref{prop:linenest}, \ref{prop:highenest})]
  {\itshape Consider a solution $(\dot{r},\dot{\rho})$ to the linearised Einstein-Euler system in comoving coordinates $(\phi,\chi)$ around a small star $(r_0,\rho_0)$.
    If the initial energy $\mathcal{E}(0)$ is finite, then the energy
    \begin{equation*}
      \mathcal{E}(\phi):=\int_0^B \biggl[ \Bigl(\frac{\dot{r}}{r_0}\Bigr)^2+\Bigl(\frac{\partial \dot{r}}{\partial \phi}\Bigr)^2+r_0^4\Bigl(\frac{\partial\dot{r}}{\partial\chi}\Bigr)^2\biggr] \ud \chi
    \end{equation*}
    remains uniformly bounded in time, for all  $\phi\geq 0$.
    Similar uniform boundedness statements hold for all other linearised quantities, and higher order energies.  }
\end{description}

Finally the \emph{linearised} Einstein-Euler system admits time periodic solutions which are $\mathrm{C}^1$-regular at the center $\Gamma$.
This is true because the decoupled wave equation for $\dot{r}$ admits time periodic solutions which satisfy a suitable boundary condition at the center $r_0=0$. This shows that the above uniform boundedness statement is \emph{optimal}.

\begin{description}
\item[Existence of time periodic solutions (Prop.~\ref{prop:periodsol})]
  {\itshape The decoupled equation for $\dot{r}$ admits solutions of the form $\dot{r}=e^{i\sqrt{\lambda_j} \phi}h(r_0)$ which are \emph{regular} at the center and satisfy the vanishing pressure condition at the boundary, where $\lambda_j\to\infty$ is a series of \emph{real} numbers, and $\sqrt{\lambda_1}\sim r_0\rvert_{\mathcal{B}}^{-1}$.}
\end{description}

The existence of \emph{non-linear} periodic solutions to the Einstein-Euler equations in a neighborhood of small hard stars remains a difficult open problem.\footnote{Approximate time periodic solutions to the Euler-Poisson system near Lane-Emden stars have recently been constructed in \cite{Jang:16}.}


\begin{quote}
\textbf{Acknowledgements.} {\small We would like to thank Robert Wald for a useful discussion at the trimester program on ``Mathematical General Relativity'' at the IHP, in Paris (2015). G.F.~was supported by the \texttt{EPSRC grant EP/K00865X/1} on ``Singularities of Geometric Partial Differential Equations'' and partially by the \texttt{ERC grant 714408 GEOWAKI} under the European Union's Horizon 2020 research and innovation program. V.S.~gratefully acknowledges the supported of \texttt{ERC consolidator Grant 725589 EPGR}, and \texttt{ERC advanced grant 291214 BLOWDISOL}.}
\end{quote}

\section{Existence of hard stars and their properties}
\label{sec:existence}


In spherical symmetry the metric takes the form
\begin{equation}
  g=-\Omega^2\ud u\ud v+r^2\mathring{\gamma}
\end{equation}
where $(u,v)$ are double null coordinates on $\mathcal{Q}$. Moreover the \textsc{Hawking} mass is defined by:
\begin{equation}\label{eq:m}
  1-\frac{2m}{r}=-\frac{4}{\Omega^2}\frac{\partial r}{\partial u}\frac{\partial r}{\partial v}
\end{equation}

Recall from \cite{Ch:95:I} that with the notation
\begin{equation}\label{eq:sigma}
  \sigma=\lVert \ud \phi \rVert
\end{equation}
we have the relation
\begin{equation}
  2\rho-1=\sigma^2 \label{eq:rho:sigma}
\end{equation}
in the ``hard phase.''

Moreover in null coordinates, the equation \eqref{eq:wave:intro} can be expressed as
\begin{equation}\label{eq:wave}
  \frac{\partial^2\phi}{\partial u\partial v}+\frac{1}{r}\frac{\partial r}{\partial u}\frac{\partial \phi}{\partial v}+\frac{1}{r}\frac{\partial r}{\partial v}\frac{\partial \phi}{\partial u}=0
\end{equation}
and the Hessian equations for the radius function read, c.f.~(3.48) in \cite{Ch:95:I}:
\begin{subequations}\label{eq:hessian}
  \begin{align}
    \frac{\partial^2 r}{\partial u^2}-\frac{2}{\Omega}\frac{\partial \Omega}{\partial u}\frac{\partial r}{\partial u}&=-4\pi r\bigl(\frac{\partial \phi}{\partial u}\bigr)^2\\
    \frac{\partial^2 r}{\partial u\partial v}+\frac{1}{r}\frac{\partial r}{\partial u}\frac{\partial r}{\partial v}&=\frac{\Omega^2}{4r}\bigl(4\pi r^2-1\bigr) \label{eq:hessian:uv}\\
    \frac{\partial^2 r}{\partial v^2}-\frac{2}{\Omega}\frac{\partial \Omega}{\partial v}\frac{\partial r}{\partial v}&=-4\pi r\bigl(\frac{\partial \phi}{\partial v}\bigr)^2
  \end{align}
\end{subequations}

For future reference we also note the mass equations, c.f.~(3.50) in \cite{Ch:95:I},
\begin{subequations}\label{eq:mass}
  \begin{align}
    \frac{\partial r}{\partial u}\frac{\partial m}{\partial u} &= 2\pi r^2\Bigl[\Bigl(\frac{\partial r}{\partial u}\Bigr)^2+\Bigl(1-\frac{2m}{r}\Bigr)\Bigl(\frac{\partial \phi}{\partial u}\Bigr)^2\Bigr]\label{eq:mass:u}\\
    \frac{\partial r}{\partial v}\frac{\partial m}{\partial v} &= 2\pi r^2\Bigl[\Bigl(\frac{\partial r}{\partial v}\Bigr)^2+\Bigl(1-\frac{2m}{r}\Bigr)\Bigl(\frac{\partial \phi}{\partial v}\Bigr)^2\Bigr]
  \end{align}
\end{subequations}

\begin{rem}
  The above system is \emph{not} identical to the Einstein -- scalar field system. Note in particular the additional term on the right hand side of \eqref{eq:hessian:uv}, which breaks the scale invariance of the system; c.f.~(1.4a) in \cite{Ch:93}, and Section~2 therein. 
\end{rem}

\subsection{Derivation of the hydrostatic equations in the hard phase}

Here we are interested in \emph{static} solutions,
for which the fluid velocity
\begin{equation}
   V^\mu=\sigma u^\mu=-g^{\mu\nu}\partial_\nu \phi
\end{equation}
generates an isometry, (and is orthogonal to the level sets of $\phi$).

Let the quotient $\mathcal{Q}$ be covered by null coordinates $(u,v)$ such that $u=-v$ on the initial hypersurface $\phi=0$, and $u=v$ at the center. Then the fluid velocity potential $\phi$ is a function of $(u,v)$, and 
we make the ansatz
\begin{equation}\label{eq:star:phi:ansatz}
  \phi=u+v
\end{equation}
so that
\begin{equation}
  V=\frac{2}{\Omega^2}\Bigl(\frac{\partial}{\partial u}+\frac{\partial}{\partial v}\Bigr)
\end{equation}

For the solution we derive all fluid variables, including $\phi$ and $V$, are in fact compactly supported in $v-u$. We will discuss the yet to be determined boundary below, and in the surrounding vacuum region the spacetime is always isometric to Schwarzschild.

With $\phi$ given by \eqref{eq:star:phi:ansatz} we have
\begin{equation}
  \lVert \ud\phi\rVert^2=\frac{4}{\Omega^2}
\end{equation}
which implies in view of \eqref{eq:sigma} and \eqref{eq:rho:sigma} that
\begin{equation}\label{eq:rho:Omega}
  2\rho-1=\frac{4}{\Omega^2}\,.
\end{equation}

Now the wave equation \eqref{eq:wave} implies
\begin{equation}
  \frac{1}{r}\frac{\partial r}{\partial u}+\frac{1}{r}\frac{\partial r}{\partial v}=0\ \text{, or}\quad V\cdot r=0
\end{equation}
which says that $r$ is just a function of
\begin{equation}
  x=v-u
\end{equation}
(we will denote differentiation by $x$ by $'$).
Substituting into the mass equations \eqref{eq:mass} further implies that
\begin{equation}
  V\cdot m=0
\end{equation}
hence also $m=m(x)$.

The Hessian equation \eqref{eq:hessian:uv} now implies 
\begin{equation}
  -r''-\frac{1}{r}r'r'=\frac{\Omega^2}{4r}\bigl(4\pi r^2-1\bigr)
\end{equation}
and from \eqref{eq:mass:u} we infer
\begin{equation}
  r'm'=2\pi r^2\Bigl[(r')^2+\bigl(1-\frac{2m}{r}\bigr)\Bigr]
\end{equation}
Moreover, recall the defining equation for the mass function \eqref{eq:m} which tells us
\begin{equation}\label{eq:m:def}
  1-\frac{2m}{r}=\frac{4}{\Omega^2}(r')^2
\end{equation}
These two equations together give us the o.d.e for $m(r)$:
\begin{equation}
      \frac{\ud m}{\ud r}=\frac{m'}{r'}=2\pi r^2\Bigl[1+\frac{4}{\Omega^2}\Bigr]
\end{equation}
or simply
\begin{equation}
    \frac{\ud m}{\ud r}=4\pi r^2 \rho\label{eq:m:ode}
\end{equation}

Next we have to derive an equation for $\rho$. Substituting in \eqref{eq:rho:Omega} for $\Omega^2$ from \eqref{eq:m:def}, and differentiating  with respect to $x$, we find
\begin{subequations}
  \begin{gather}
    2\rho=1+\frac{1}{(r')^2}\Bigl(1-\frac{2m}{r}\Bigr)\\
    2\rho'=-\frac{1}{r'}\frac{2}{r}\Bigl[4\pi r^2\bigl(\rho-1\bigr)+\frac{m}{r}\Bigr]
  \end{gather}
\end{subequations}
  or simply
\begin{equation}
      \frac{\ud \rho}{\ud r}=-\frac{2\rho-1}{r-2m}\Bigl[4\pi r^2(\rho-1)+\frac{m}{r}\Bigr]\label{eq:rho:ode}
\end{equation}
The latter is a special case of the \textsc{Tolman-Oppenheimer-Volkoff} equation \cite{Oppenheimer:39}.

Note in particular that
\begin{equation}
  \frac{\ud \rho}{\ud r}\leq 0
\end{equation}
because $\rho\geq 1$ in the hard phase, and $r>2m$ in absence of trapped surfaces.
Solutions are now obtained by solving the o.d.e.'s \eqref{eq:m:ode}, and \eqref{eq:rho:ode} from the center $r=0$ to the boundary where
\begin{equation}
  p=0\,,\qquad \rho=1\,.
\end{equation}

\subsection{Existence of stars}
\label{sec:exist:stars}

The system of o.d.e.'s for $m$, and $\rho$, with the boundary values $m(0)=0$ and $\rho(R)=1$ has a continuous solution for all $R>0$ sufficiently small, which shows the existence of a family of \emph{small static stars} in the two phase model, parametrised by the radius of the boundary $R$.

\begin{prop} \label{prop:exist:stars}
  For any $0<R<R_0$, $R_0\ll \sqrt{\frac{3}{4\pi}}$, there exists a continuous solution $(m_R,\rho_R)$ to the system of o.d.e.'s (\ref{eq:m:ode},\ref{eq:rho:ode}), $\rho$ monotone decreasing, and $m$ monotone increasing as a function of $r\in[0,R]$, with the properties that for some $C>0$, independent of $R$, 
  \begin{align*}
    1\leq\rho_R(r)\leq 1+C R^2  &\qquad \rho_R(R)=1 \\
     m_R(0)=0 &\qquad 1\leq\frac{3}{4\pi}\frac{m_R(r)}{r^3}\leq 1+C R^2
  \end{align*}

\end{prop}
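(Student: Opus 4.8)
The plan is to solve the two-point problem for \eqref{eq:m:ode}--\eqref{eq:rho:ode} directly by a contraction mapping, using that for $R$ small the desired solution is a small perturbation of the constant configuration $\rho\equiv 1$, $m=\tfrac{4\pi}{3}r^3$. First I would observe that \eqref{eq:m:ode}--\eqref{eq:rho:ode} together with $m_R(0)=0$ and $\rho_R(R)=1$ is equivalent to the coupled system of integral equations
\begin{equation*}
  m[\rho](r)=\int_0^r 4\pi s^2\rho(s)\,\ud s\,,\qquad
  \rho(r)=1+\int_r^R\frac{2\rho(s)-1}{s-2m[\rho](s)}\Bigl[4\pi s^2\bigl(\rho(s)-1\bigr)+\frac{m[\rho](s)}{s}\Bigr]\,\ud s\,,
\end{equation*}
where the first relation encodes $m_R(0)=0$ and the second encodes $\rho_R(R)=1$. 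I would then fix a constant $K$ (to be chosen, e.g. $K=2\pi$), set
\begin{equation*}
  X_{K,R}=\Bigl\{\rho\in C([0,R])\,:\,\rho\geq 1,\ \lVert\rho-1\rVert_{C([0,R])}\leq KR^2\Bigr\}\,,
\end{equation*}
and study the map $T:\rho\mapsto\tilde\rho$ defined by the right-hand side of the second equation with $m=m[\rho]$.

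The next step is to check that $T$ maps $X_{K,R}$ into itself and is a contraction there for $R$ sufficiently small. For $\rho\in X_{K,R}$ one has $0\leq m[\rho](s)\leq\tfrac{4\pi}{3}s^3(1+KR^2)$, so $s-2m[\rho](s)\geq s\bigl(1-\tfrac{8\pi}{3}R^2(1+KR^2)\bigr)\geq s/2$ once $R$ is small --- this is the point where the smallness condition $R\ll\sqrt{3/(4\pi)}$ (absence of trapped surfaces) is used. Since the apparent singularities $m[\rho]/s$ and $s^2(\rho-1)$ at $s=0$ vanish, the integrand is continuous and bounded on $(0,R]$, whence $\tilde\rho\in C([0,R])$ with $\tilde\rho(R)=1$; moreover $\tilde\rho\geq 1$ automatically because the bracket and the prefactor $(2\rho-1)/(s-2m[\rho])$ are nonnegative. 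A crude bound on the integral gives
\begin{equation*}
  \lVert\tilde\rho-1\rVert_{C([0,R])}\leq\int_0^R\frac{2(1+2KR^2)}{s}\Bigl[4\pi s^2 KR^2+\tfrac{4\pi}{3}s^2(1+KR^2)\Bigr]\,\ud s\leq C_0 R^2\,,
\end{equation*}
where $C_0$ can be made as close to $\tfrac{4\pi}{3}$ as desired by shrinking $R$; choosing $K=2\pi$ and $R$ small enough that $C_0\leq K$ gives the self-mapping property. An analogous estimate for $T\rho_1-T\rho_2$ produces a factor $\mathcal{O}(R^2)\lVert\rho_1-\rho_2\rVert_{C([0,R])}$, so $T$ is a contraction for $R<R_0$ with $R_0$ small. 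Banach's fixed point theorem then provides a unique $\rho_R\in X_{K,R}$, and I set $m_R:=m[\rho_R]$; these are continuous on $[0,R]$, $\mathrm{C}^1$ on $(0,R]$, and solve \eqref{eq:m:ode}--\eqref{eq:rho:ode} with $m_R(0)=0$, $\rho_R(R)=1$.

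It then remains to read the stated quantitative properties off the equations. Since $\rho_R\geq 1$ and $r>2m_R$ on $[0,R]$, the right-hand side of \eqref{eq:rho:ode} is $\leq 0$, so $\rho_R$ is monotone decreasing, giving $1=\rho_R(R)\leq\rho_R(r)\leq 1+KR^2$; likewise $m_R'=4\pi r^2\rho_R>0$, so $m_R$ is increasing with $m_R(0)=0$. For the last inequality, writing $w(r):=\tfrac{3}{4\pi}m_R(r)r^{-3}=\tfrac{3}{r^3}\int_0^r s^2\rho_R(s)\,\ud s$ exhibits $w(r)$ as a weighted average of $\rho_R$ over $[0,r]$, and since $\rho_R$ is decreasing this yields $\rho_R(r)\leq w(r)\leq\rho_R(0)$, hence $1\leq\tfrac{3}{4\pi}m_R(r)r^{-3}\leq 1+KR^2$, i.e. the claim with $C=K$. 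The main obstacle in making this rigorous is the bookkeeping near the coordinate singularity at $r=0$: one must confirm that the singular factors in \eqref{eq:rho:ode} are indeed integrable and, more importantly, that all constants in the self-mapping and contraction estimates are uniform in $R$, and that smallness of $R$ genuinely keeps $r-2m_R$ bounded away from $0$. (Alternatively one could run the classical shooting argument in the central density $\rho_c=\rho_R(0)$: the map $\rho_c\mapsto\rho(R;\rho_c)$ is continuous, equals $1-\tfrac{2\pi}{3}R^2+o(R^2)<1$ at $\rho_c=1$ and exceeds $1$ at $\rho_c=1+KR^2$ for $K>\tfrac{2\pi}{3}$, so the intermediate value theorem supplies the central density; the local solvability of the initial value problem at $r=0$ is then precisely the analogue of the difficulty noted above, cf.~\cite{Rendall:91,Makino:98}.)
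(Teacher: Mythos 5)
Your proposal is correct and follows essentially the same route as the paper: a contraction mapping for the integral form of the two-point boundary value problem, viewed as a small perturbation of the configuration $\rho\equiv 1$, $m=\tfrac{4\pi}{3}r^3$, with the smallness of $R$ supplying both the bound $r-2m\geq r/2$ and the contraction factor $\mathcal{O}(R^2)$. The only (harmless) differences are that you eliminate $m=m[\rho]$ and contract in $\rho$ alone, where the paper contracts in the pair $(\tilde m,\tilde\rho)$ with the weighted norm $\tfrac{3}{4\pi}\lVert m/r^3\rVert_\infty+2\lVert\rho\rVert_\infty$, and that you obtain the bound $1\leq\tfrac{3}{4\pi}m_R r^{-3}\leq 1+CR^2$ a posteriori from the weighted-average observation rather than building it into the function space.
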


\begin{rem}
  We include here for completeness the existence proof for \emph{small} stars, which is based on a simple contraction mapping, and suffices for the analysis of the linearised equations in Section~\ref{sec:linear}.
  Alternatively, this could be approached using the formulation \cite{Rein:13}, which has the advantage that their proof likely extends to the \emph{large central density} regime; see also \cite{Heinzle:03} for a dynamical systems formulation. The analysis of hard stars with large central densities lies outside the scope of this paper, but given their drastically different properties, their further investigation is an interesting topic; c.f.~\cite{Makino:98}.
\end{rem}

We write
\begin{subequations}
\begin{align}
  m&=\frac{4\pi}{3} r^3+\tilde{m}\\
  \rho&=1+\tilde{\rho}
\end{align}
\end{subequations}
which is motivated by an approximation of $\rho$ by its value on the boundary, where $\rho=1$.
Then $\mt$, and $\rhot$ satisfy,
\begin{subequations}\label{eq:static:tilde}
\begin{align}
  \frac{\ud \mt }{\ud r}&=4\pi \rhot r^2\\
  \frac{\ud \rhot}{\ud r}&=-\frac{1+2\rhot}{1-\frac{8\pi}{3}r^2-\frac{2\mt}{r}}\frac{4\pi r}{3}\Bigl[3\rhot+1+\frac{3\mt}{4\pi r^3}\Bigr]\label{eq:static:rho:tilde}
\end{align}
\end{subequations}
with the boundary values:
\begin{equation}\label{eq:static:tilde:boundary}
  \mt(0)=0\qquad \rhot(R)=0
\end{equation}


We will give a standard proof of the existence of solutions to this system of o.d.e.'s by invoking the contraction mapping principle in the space:
\begin{multline}
  \mathcal{C}_R^0:= \Bigl\{ [0,R] \longrightarrow \mathbb{R}^2 \text{ continuous}, r\mapsto (m,\rho)  :\\ 0\leq m\leq \frac{4\pi }{3}\rho_0(R) r^3, 0\leq \rho\leq \rho_0(R)\Bigr\}
\end{multline}
where
\begin{equation}
  \rho_0(R) = \frac{16\pi}{3} R^2
\end{equation}

For this purpose define
\begin{subequations}
\begin{align}
  T[m,\rho]&=(M[\rho],P[m,\rho])\\
              M[\rho](r)&= \int_0^r 4\pi r^2 \rho(r) \ud r\\
              P[m,\rho](r)&= \int_r^R\frac{1+2\rho}{1-\frac{8\pi}{3}r^2-\frac{2m}{r}}\frac{4\pi r}{3}\Bigl[1+3\rho+\frac{3m}{4\pi r^3}\Bigr]             
\end{align}
\end{subequations}

First we show that $T$ is a bounded map on $\mathcal{C}_R^0$ endowed with the norm
\begin{equation}
  \lVert (m,\rho) \rVert := \frac{3}{4\pi}\lVert \frac{m}{r^3} \rVert_\infty + 2 \lVert \rho \rVert_\infty
\end{equation}

\begin{lemma} For all $0<R<\frac{1}{2}\sqrt{\frac{3}{8\pi}}$, we have  $T:\mathcal{C}_R^0\to\mathcal{C}_R^0$.
\end{lemma}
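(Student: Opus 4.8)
The plan is to check the two defining inequalities of $\mathcal{C}^0_R$ --- namely $0\le m\le\tfrac{4\pi}{3}\rho_0(R)r^3$ and $0\le\rho\le\rho_0(R)$ --- separately for the two components $M[\rho]$ and $P[m,\rho]$ of $T[m,\rho]$, and to verify continuity on $[0,R]$. The inputs are only the membership bounds for the argument $(m,\rho)\in\mathcal{C}^0_R$ together with the smallness of $R$; since $\rho_0(R)=\tfrac{16\pi}{3}R^2$, the hypothesis $R<\tfrac12\sqrt{3/(8\pi)}$ gives at once $\delta:=\tfrac{8\pi}{3}R^2<\tfrac14$ and $\rho_0(R)=2\delta<\tfrac12$, which is the form in which I will use it.

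The bound for the first component is immediate: integrating $4\pi s^2\rho(s)$ over $[0,r]$ and using $0\le\rho\le\rho_0(R)$ gives $0\le M[\rho](r)\le\tfrac{4\pi}{3}\rho_0(R)r^3$, and $M[\rho]$ is $C^1$ with $M[\rho](0)=0$. For the second component I would first control the denominator $D(s):=1-\tfrac{8\pi}{3}s^2-\tfrac{2m(s)}{s}$ of the integrand of $P$: from $m(s)\le\tfrac{4\pi}{3}\rho_0(R)s^3$ one has $\tfrac{2m(s)}{s}\le\tfrac{8\pi}{3}\rho_0(R)s^2$, so $\tfrac{8\pi}{3}s^2+\tfrac{2m(s)}{s}\le\tfrac{8\pi}{3}R^2(1+\rho_0(R))<\tfrac14\cdot\tfrac32<1$ and hence $D(s)\ge\tfrac58$ on $[0,R]$. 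Every factor of the integrand is then nonnegative, so $P[m,\rho](r)\ge0$; and since the integrand is $O(s)$ as $s\to0$ (note $\tfrac{4\pi s}{3}\cdot\tfrac{3m(s)}{4\pi s^3}=m(s)/s^2$, bounded by $\tfrac{4\pi}{3}\rho_0(R)s$), it is integrable down to $0$ and $P[m,\rho]$ extends to a continuous (indeed $C^1$) function on $[0,R]$ with $P[m,\rho](R)=0$.

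The one delicate step is the upper bound $P[m,\rho](r)\le\rho_0(R)$. I would bound the two numerator factors by $1+2\rho(s)\le1+2\rho_0(R)$ and $1+3\rho(s)+\tfrac{3m(s)}{4\pi s^3}\le1+4\rho_0(R)$, and bound $D(s)$ from below by $1-\tfrac{8\pi}{3}(1+\rho_0(R))s^2$ while keeping its $s$-dependence. The estimate then reduces to the elementary integral $\int_r^R \tfrac{4\pi s}{3}\bigl(1-as^2\bigr)^{-1}\ud s=\tfrac{1}{4(1+\rho_0(R))}\log\tfrac{1-ar^2}{1-aR^2}\le-\tfrac{1}{4(1+\rho_0(R))}\log(1-aR^2)$ with $a=\tfrac{8\pi}{3}(1+\rho_0(R))$. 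Using $\rho_0(R)=2\delta$, $\tfrac{2\pi}{3}R^2=\tfrac18\rho_0(R)$ and $aR^2=\delta(1+2\delta)$, the desired inequality becomes the scalar inequality $(1+4\delta)(1+8\delta)\bigl(-\log(1-\delta(1+2\delta))\bigr)\le8\delta(1+2\delta)$ for $0<\delta<\tfrac14$, which is easily checked (both sides are increasing and at $\delta=\tfrac14$ equal roughly $2.82$ and $3$). This yields $P[m,\rho](r)\le\rho_0(R)$ and finishes the proof.

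The step I expect to be the real obstacle is exactly this last numerical closure. The cruder estimate that replaces $D(s)$ by its minimum over $[0,R]$ only gives $\tfrac{(1+4\delta)(1+8\delta)}{8(1-\delta(1+2\delta))}\le1$, which is \emph{violated} for $\delta$ near $\tfrac14$; so one must either retain the $s$-dependence in the denominator and integrate the logarithm, as above, or else shrink the constant in $R_0=\tfrac12\sqrt{3/(8\pi)}$ by a harmless factor. Everything else --- the bound on $M$, the lower bound on the denominator, positivity, and continuity --- is routine.
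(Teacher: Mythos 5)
Your argument has the same skeleton as the paper's: both verify the two membership bounds componentwise, bound the denominator $1-\tfrac{8\pi}{3}s^2-\tfrac{2m}{s}$ from below using $m\le\tfrac{4\pi}{3}\rho_0(R)s^3$, and bound the numerator factors by $1+2\rho_0(R)$ and $1+4\rho_0(R)$. The one genuine divergence is the final closure of the upper bound on $P$: the paper replaces the denominator by the constant lower bound $\tfrac12$ and asserts $\tfrac{4\pi}{3}(1+4\rho_0(R))^2(R^2-r^2)\le\tfrac{16\pi}{3}R^2$, which as written needs $\rho_0(R)\le\tfrac14$ rather than the $\rho_0(R)\le\tfrac12$ that the stated hypothesis supplies --- so the paper's constants only close after shrinking $R_0$ by a harmless factor, which is exactly the alternative you flag at the end. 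Your logarithmic integration, which keeps the $s$-dependence of the denominator, is sharper and does close at the stated threshold; this is a legitimate improvement rather than a different method. One small caveat: justifying the scalar inequality $(1+4\delta)(1+8\delta)\bigl(-\log(1-\delta(1+2\delta))\bigr)\le 8\delta(1+2\delta)$ by ``both sides are increasing, compare at $\delta=\tfrac14$'' is not by itself a proof. The clean fix is to note that the ratio of left to right side equals $\tfrac{(1+4\delta)(1+8\delta)}{8}\cdot\tfrac{-\log(1-x)}{x}$ with $x=\delta(1+2\delta)$, a product of positive increasing functions of $\delta$, hence increasing, and approximately $0.94<1$ at $\delta=\tfrac14$.
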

\begin{proof}
  Suppose $(m,\rho)\in\mathcal{C}_R^0$,  then $M[\rho]\geq 0$, and
  \begin{equation*}
    M[\rho]\leq \frac{4\pi}{3}\rho_0 r^3\,.
  \end{equation*}

  Moreover,
  \begin{equation*}
    1-\frac{8\pi}{3}r^2-\frac{2m}{r}\geq 1-\frac{8\pi}{3}(\rho_0(R)+1)R^2\geq \frac{1}{2}
  \end{equation*}
  and therefore $P[m,\rho]\geq 0$ is decreasing in $r$, and
  \begin{equation*}
    \begin{split}
      P[m,\rho]&\leq 2(1+2\rho_0)\int_r^R\frac{4\pi r}{3}(1+4\rho_0)\ud r\leq \frac{4\pi}{3}(1+4\rho_0(R))^2\bigl(R^2-r^2\bigr)\\
      &\leq \frac{16\pi}{3}R^2
    \end{split}
  \end{equation*}
because with the assumed bound on $R$, and choice of $\rho_0(R)$: $\rho_0\leq \frac{1}{2}$.

\end{proof}

Second we show the contraction property:

\begin{lemma} For $0<R<\frac{1}{2}\sqrt{\frac{3}{4\pi}}$ sufficiently small, $T$ is a contraction on $(\mathcal{C}_R^0,\lVert \cdot \rVert)$,
  \begin{equation*}
    \lVert T[m_1,\rho_1]-T[m_2,\rho_2] \rVert \leq \frac{3}{4} \lVert (m_1-m_2,\rho_1-\rho_2) \rVert
  \end{equation*}
  
\end{lemma}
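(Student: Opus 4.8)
The plan is to establish the contraction estimate by working componentwise, writing $T[m_1,\rho_1]-T[m_2,\rho_2]=(M[\rho_1]-M[\rho_2],\,P[m_1,\rho_1]-P[m_2,\rho_2])$ and bounding each piece separately in the norm $\lVert(m,\rho)\rVert=\frac{3}{4\pi}\lVert m/r^3\rVert_\infty+2\lVert\rho\rVert_\infty$. The first component is linear in $\rho$, so directly
\begin{equation*}
  \lvert M[\rho_1](r)-M[\rho_2](r)\rvert=\Bigl\lvert\int_0^r 4\pi r^2(\rho_1-\rho_2)\,\ud r\Bigr\rvert\leq \frac{4\pi}{3}r^3\,\lVert\rho_1-\rho_2\rVert_\infty\,,
\end{equation*}
so $\frac{3}{4\pi}\lVert (M[\rho_1]-M[\rho_2])/r^3\rVert_\infty\leq \lVert\rho_1-\rho_2\rVert_\infty\leq\tfrac12\lVert(m_1-m_2,\rho_1-\rho_2)\rVert$; this term is already comfortably below $3/4$ independently of how small $R$ is, and the room to spare is what will absorb the $P$-contribution.

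Next I would handle $P[m,\rho]$, which is the delicate part since it is nonlinear in both arguments through the factor $\bigl(1-\tfrac{8\pi}{3}r^2-\tfrac{2m}{r}\bigr)^{-1}$ and the bracket $1+3\rho+\tfrac{3m}{4\pi r^3}$. The strategy is to add and subtract: write the integrand difference as a sum of three terms where only one factor changes at a time — the numerator $1+2\rho$, the denominator $(1-\tfrac{8\pi}{3}r^2-\tfrac{2m}{r})^{-1}$, and the bracket — using the identity $\frac{1}{A_1}-\frac{1}{A_2}=\frac{A_2-A_1}{A_1A_2}$ for the denominator term, together with the uniform lower bound $1-\tfrac{8\pi}{3}r^2-\tfrac{2m}{r}\geq\tfrac12$ established in the previous lemma (valid on $\mathcal C_R^0$ for $R<\tfrac12\sqrt{3/(8\pi)}$) so that $A_1A_2\geq\tfrac14$. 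Each of the three resulting terms contains either $\lVert\rho_1-\rho_2\rVert_\infty$ or $\lVert (m_1-m_2)/r^3\rVert_\infty$ (for the $m/r$ piece note $\lvert m_1/r-m_2/r\rvert\leq r^2\lVert(m_1-m_2)/r^3\rVert_\infty\leq R^2\lVert\cdots\rVert_\infty$) multiplied by a coefficient that is $O(1)$ in the uniform bounds for $(m,\rho)\in\mathcal C_R^0$ — recall $\rho\leq\rho_0(R)\leq\tfrac12$ and $m/r^3\leq\tfrac{4\pi}{3}\rho_0$ — and, crucially, by an extra factor $\int_r^R \tfrac{4\pi r}{3}\,\ud r\leq\tfrac{2\pi}{3}R^2$. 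Thus every contribution to $P[m_1,\rho_1]-P[m_2,\rho_2]$ carries a gratuitous power of $R^2$.

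Collecting, one obtains a bound of the shape $\lvert P[m_1,\rho_1]-P[m_2,\rho_2]\rvert\leq C R^2\,\lVert(m_1-m_2,\rho_1-\rho_2)\rVert$ with $C$ an absolute constant coming from the various $O(1)$ coefficients above; since the norm on $P$ only enters through the $\lVert\rho\rVert_\infty$ slot with weight $2$, the total is $\leq (\tfrac12 + 2CR^2)\lVert(m_1-m_2,\rho_1-\rho_2)\rVert$, and choosing $R$ small enough that $2CR^2\leq\tfrac14$ gives the claimed factor $\tfrac34$. I expect the main obstacle to be purely bookkeeping: keeping track of the several add-and-subtract terms in the $P$-estimate and verifying that each one genuinely produces the saving factor $R^2$ rather than an $O(1)$ term (in particular the denominator term, where one must confirm that $A_2-A_1 = \tfrac{2m_1}{r}-\tfrac{2m_2}{r}$ contributes the right power of $r$ after dividing by the $O(1)$ product $A_1A_2$ and integrating against $\tfrac{4\pi r}{3}\,\ud r$). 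No conceptual difficulty arises; the contraction constant can be made as small as desired by shrinking $R$, which is exactly why the statement is phrased ``for $R$ sufficiently small.''
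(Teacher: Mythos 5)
Your proposal is correct and follows essentially the same route as the paper: the linear $M$-component contributes at most $\tfrac12\lVert(m_1-m_2,\rho_1-\rho_2)\rVert$, while the $P$-difference is expanded term by term (the paper does this by putting everything over the common denominator $\prod_i(1-\tfrac{8\pi}{3}r^2-\tfrac{2m_i}{r})$, which is just the explicit bookkeeping version of your add-and-subtract) and each piece gains the factor $R^2$ from $\int_r^R\tfrac{4\pi r}{3}\,\ud r$ together with the uniform bounds on $\mathcal{C}_R^0$ and the lower bound $1-\tfrac{8\pi}{3}r^2-\tfrac{2m}{r}\geq\tfrac12$. The paper then concludes exactly as you do, choosing $R$ so that $CR^2\leq\tfrac14$.
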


\begin{proof}
  For short let $M_i=M[\rho_i]$, $i=1,2$, and $P_i=P[m_i,\rho_i]$, $i=1,2$. Then
  \begin{equation*}
    M_1(r)-M_2(r)=\int_0^r 4\pi r^2(\rho_1-\rho_2)\ud r\leq \frac{4\pi}{3}\lVert \rho_1-\rho_2\rVert_\infty r^3
  \end{equation*}
  hence
  \begin{equation*}
    \frac{3}{4\pi}\lVert \frac{M_1-M_2}{r^3} \rVert_\infty \leq \lVert \rho_1-\rho_2 \rVert_\infty\,.
  \end{equation*}

  Moreover
  \begin{align*}
    P_1(r)-P_2(r) &= \int_r^R \frac{4\pi r}{3}\Pi_{i=1}^2\Bigl(1-\frac{8\pi}{3}r^2-\frac{2m_i}{r}\Bigr)^{-1}\times\\
                  &\times\biggl[\Bigl(1-\frac{8\pi}{3}r^2\Bigr)\Bigl(5(\rho_1-\rho_2)+\frac{3}{4\pi r^3}(m_1-m_2)+6(\rho_1^2-\rho_2^2)\\
                  &\qquad +6(\rho_1-\rho_2)\frac{m_1}{4\pi r^3}+6\rho_2\bigl(\frac{m_1}{4\pi r^3}-\frac{m_2}{4\pi r^3}\bigr)\Bigr)\displaybreak[0]\\
                  &-\frac{2(m_2-m_1)}{r}-\frac{2m_2}{r}(2\rho_1-2\rho_2)+\frac{2(m_1-m_2)}{r}2\rho_2\\
                  &-\frac{2m_2}{r}\Bigl(3\rho_1+\frac{3m_1}{4\pi r^3}\Bigr)+\frac{2m_1}{r}\Bigl(3\rho_2+\frac{3m_2}{4\pi r^3}\Bigr)\\
                  &-\frac{2m_2}{r}2\rho_1\Bigl(3\rho_1+\frac{3m_1}{4\pi r^3}\Bigr)+\frac{2m_1}{r}2\rho_2\Bigl(3\rho_2+\frac{3m_2}{4\pi r^3}\Bigr)\biggr]
  \end{align*}
hence
\begin{equation*}
  2 \lVert P_1-P_2 \rVert_\infty \leq C R^2 \Bigl[ 2 \lVert \rho_1-\rho_2 \rVert_\infty + \frac{3}{4\pi} \lVert \frac{m_1-m_2}{r^3} \rVert_\infty\Bigr]
\end{equation*}
for some numerical constant $C>0$,
where we used that 
\begin{equation*}
  2 \lVert \rho_i \rVert_\infty + \frac{3}{4\pi} \lVert \frac{m_i}{r^3} \rVert_\infty \leq 4\rho_0(R)\leq  2
\end{equation*}
Since for $R$ sufficiently small, $C R^2\leq\frac{1}{4}$, the statement follows.

\end{proof}

This yields by the Banach fixed point theorem the existence of $(\mt,\rhot)\in \mathcal{C}_R^0$, for any $R>0$ sufficiently small, such that
\begin{equation}
  M[\mt]=\mt\qquad P[\mt,\rhot]=\rhot
\end{equation}
namely a family of solutions $(\mt_R,\rhot_R)$ to \eqref{eq:static:tilde}, satisfying the boundary conditions \eqref{eq:static:tilde:boundary} parametrized by $0<R\ll \sqrt{\frac{3}{4\pi}}$.
This completes the proof of Proposition~\ref{prop:exist:stars}.

\subsection{Discussion of the solutions}

While the properties of hard stars given in the existence proof in Section~\ref{sec:exist:stars} are sufficient to proceed with the variational characterisation in Section~\ref{sec:variation}, it is nonetheless interesting to point out a number of qualitative features of these solutions.

\subsubsection{Remarks on the size of the stars}

In this section we include the natural constants $G$, and $c$, and recall that $[G/c^2]=\text{Length}/\text{Mass}$.

\begin{rem}
 A ``regular'' hard star is no larger than
\begin{equation}
  R<R_\circ := \sqrt{\frac{c^2}{G}\frac{3}{8\pi}\frac{1}{\rho_0}} \,.
\end{equation}

\end{rem}

Indeed, on one hand since $\rho\geq \rho_0$, we have
\begin{equation}\label{eq:m:lower}
  m\geq \frac{4\pi}{3} \rho_0 r^3
\end{equation}
On the other hand, if we  require that none of the spherical shells is trapped -- this is what we mean by ``regular'' -- then
\begin{equation}
  \frac{2 G m}{c^2 r}<1\,,
\end{equation}
and we obtain the above \emph{upper bound on the size of the star}.

For typical nuclear densities of say $\rho_0\simeq 2.3\times10^{17} kg/m^3$ we find
\begin{equation}
  R_0\simeq 26\, km\,.
\end{equation}

\begin{rem}
Finally ``small'' stars  have \emph{no photonsphere}. Indeed it follows from the bound on $m$ in Prop.~\ref{prop:exist:stars} that the surface radius satisfies 
\begin{equation}
  R \geq \frac{3}{4\pi}\frac{1}{1+CR^2}\frac{1}{R^2}m_R(R)>3m(R)
\end{equation}
for $R$ sufficiently small.
\end{rem}

\subsubsection{Qualitative behaviour of the density profile}
\label{sec:star:approximate}

We remark that a qualitative picture of the density profile can be inferred from an approximate o.d.e.~derived below, which formally yields an expansion for small stars near the boundary of the form
\begin{equation}\label{eq:rho:approx}
  \rho\simeq 1+\frac{2\pi}{3}\bigl(R^2-r^2)
\end{equation}
as depicted in Figure~\ref{fig:star:graph}.

In Section~\ref{sec:exist:stars} we have seen $\rho=1$ and $m=(4\pi/3) r^3$ are good approximations of the static solutions in the sense that all deviations are $\mathcal{O}(R^2)$. Now observe that inserting this approximation for $m$ in the equation for $\rhot$, $\rho=1+\rhot$, the o.d.e.~becomes explicitly integrable. Indeed, setting $\mt=0$ in \eqref{eq:static:rho:tilde}, the approximate o.d.e. for $\rhot$ reads
\begin{equation}
    \frac{\ud \rhot}{\ud r}=-\frac{1+2\rhot}{1-\frac{8\pi}{3}r^2}4\pi r\Bigl[\rhot+\frac{1}{3}\Bigr]
\end{equation}
which separates into
\begin{equation}
  \int_{0}^{\rhot}\frac{\ud \rho}{\bigl(\rho+\frac{1}{2}\bigr)\bigl(\rho+\frac{1}{3}\bigr)}=\int_r^R\frac{8\pi r}{1-\frac{8\pi}{3}r^2}\ud r
\end{equation}
where we have taken into account the boundary condition $\rhot(R)=0$.
Thus
\begin{equation}
  6\log\Bigl\lvert\frac{\rho+\frac{1}{3}}{\rho+\frac{1}{2}}\Bigr\rvert^{\rhot}_0=-\frac{3}{2}\log\Bigl\lvert 1-\frac{8\pi}{3}r^2\Bigr\rvert^R_r
\end{equation}
or
\begin{equation}
  \frac{3\rho_1+1}{2\rho_1+1}=\Bigl\lvert\frac{1-\frac{8\pi}{3}r^2}{1-\frac{8\pi}{3}R^2}\Bigr\rvert^\frac{1}{4}
\end{equation}
Since $\rhot=\mathcal{O}(R^2)$ an expansion on both sides is justified for small $R\ll R_0$, which gives:
\begin{equation}\label{eq:rho:approx}
  \rhot(r)\simeq \frac{2\pi}{3}\bigl(R^2-r^2\bigr)
\end{equation}

\begin{figure}[tb]
  \centering
  \includegraphics{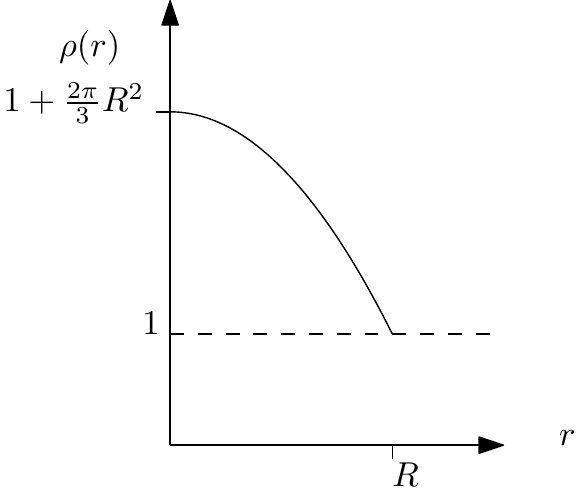}
  \caption{Density distribution of an approximate solution to the o.d.e. system for hard stars.}
  \label{fig:star:graph}
\end{figure}

Moreover from \eqref{eq:rho:Omega} we also obtain
\begin{equation}
  \frac{4}{\Omega^2}\simeq 1+\frac{4\pi}{3}\bigl(R^2-r^2\bigr)
\end{equation}
and  from \eqref{eq:m:def} it now follows that, for small $R\ll R_\circ$:
\begin{equation}
  (r')^2=\frac{\Omega^2}{4}\Bigl(1-\frac{2m}{r}\Bigr)\simeq 1-\frac{4\pi}{3}\bigl(R^2+r^2\bigr)
\end{equation}
or
\begin{equation}\label{eq:r:prime:approx}
  r'\simeq  1-\frac{2\pi}{3}\Bigl(R^2+r^2\Bigr)
\end{equation}
In particular $\frac{1}{2}\leq r'\leq 1$ for all $0\leq r\leq R$.

\begin{rem}
  As we have seen above the static solutions constructed here have finite extend, and the density has a jump at the boundary (Figure~\ref{fig:star:graph}). We point out that this is not the case for all static solutions to the Einstein-Euler equations, but this behaviour depends on the equation of state. See for instance \cite{andersson:18} for solutions with drastically different asymptotic behaviour.
\end{rem}


\section{Variational properties of hard stars}
\label{sec:variation}

In this Section we will characterize the above family of static solutions as local minimizers of the general relativistic mass energy functional.
It is known --- as already described in the book of \textsc{Harrison, Thorne, Wakano, and Wheeler} \cite{Thorne:65} --- that among all ``momentarily static and spherically symmetric configurations which contain a specified number of baryons that configuration which extremizes the mass satisfies the \textsc{Tolman-Oppenheimer-Volkoff} equation of hydrostatic equilibrium.'' The latter is \emph{identical} to \eqref{eq:rho:ode} in the present setting. We follow their approach for the calculation of the first variation, and then proceed to show that the second variation is positive for small hard stars.

As discussed in the introduction, a suitable coordinate system for the variational treatment are the ``comoving-coordinates'' $(\phi,\chi)$ relative to which the metric takes the form
\begin{equation}\label{eq:metric:com}
  g=-e^{2\psi}\ud \phi^2+e^{2\omega}\ud\chi^2+r^2\mathring{\gamma}\,,
\end{equation}
see also Section~3 in \cite{Ch:95:I}. In these coordinates the wave equation \eqref{eq:wave:intro} reduces to the conservation law:
\begin{align}\label{eq:wave:com}
\partial_\phi(r^2e^{-\psi} e^\omega)=0,
\end{align}
where 
\begin{align}\label{sigmacom}
e^{-\psi}=\|d\phi\|=\sigma
\end{align}
By (\ref{eq:rho:sigma}) we also have the relation
\begin{align}\label{psi:rho}
2\rho-1=e^{-2\psi}.
\end{align}

\subsection{First variation}
\label{sec:variation:first}

In short, the idea is to consider variations of the \emph{total mass-energy} 
\begin{equation}
  M=\int \rho \, 4\pi r^2 \ud r
\end{equation}
while keeping the \emph{total number of particles} fixed:
\begin{equation}
  N=\int n \ud \mu 
\end{equation}

A priori $M$ is a functional of  $r$, and $\rho$, which can be expressed with respect to any coordinate $\chi\in [0,B]$:
\begin{equation}\label{def:M}
  M[\rho,r]=\int_0^{B} \rho(\chi) \, 4\pi r^2(\chi) \frac{\partial r}{\partial \chi} \ud \chi
\end{equation}
However, in view of the equation of state, the density is a function of the particle density:
In fact, recall that in the hard phase \cite[1.23a]{Ch:95:I}
\begin{equation}\label{eq:n:sigma}
  \sigma=\frac{1}{v}=n,
\end{equation}
yielding the relation\footnote{here we have set the constant $m_0\equiv 1$ ; c.f.~\cite[1.10b]{Ch:95:I} where $m(s)$ is introduced as a function of the entropy only, which is constant in the present setting.}
\begin{equation}\label{eq:rho:n}
  \rho=\frac{1}{2}\bigl(n^2+1\bigr)
\end{equation}
Moreover, in specific coordinates we can find a formula for $n$ in terms of $r$, and $m$:
We choose $\chi$ to label the spherical shells in such a way that precisely $\chi$ particles are contained in the sphere of radius $r(\chi)$, namely:
  \begin{equation}\label{eq:chi:number}
    \int_0^\chi n\,\ud\mu(\chi)=\chi
  \end{equation}
where
  \begin{equation}
  \ud\mu =4\pi r^2 e^\omega d\chi 
\end{equation}
is the proper volume element. Then we obtain the formula
 \begin{equation}\label{def:n}
   n=\frac{1}{4\pi r^2}e^{-\omega}
 \end{equation}
 where $\omega$ can be determined from the mass equation
 \begin{align}\label{eq:mass:comoving}
    1-\frac{2m}{r}=-e^{-2\psi}\bigl(\frac{\partial r}{\partial \phi}\bigr)^2 +e^{-2\omega}\bigl(\frac{\partial r}{\partial \chi}\bigr)^2,
 \end{align}
 which for static solutions reads:
 \begin{equation}\label{eq:mass:static}
   1-\frac{2m}{r}=e^{-2\omega}\bigl(\frac{\partial r}{\partial \chi}\bigr)^2
 \end{equation}
In this case it follows that $M=M[r]$ is in fact only a functional of $r$.
Note also that in these coordinates $B=N$.
\begin{rem}
The coordinate $\chi$ defined initially via \eqref{eq:chi:number} is extended by the condition $[\partial_\phi,\partial_\chi]=0$. However, it is important to note that the formula \eqref{eq:chi:number} remains valid for all positive $\phi$. Indeed, from \eqref{eq:rho:n}, \eqref{sigmacom} it follows that 
\begin{align}\label{n:psi}
n=e^{-\psi}
\end{align}
Hence, the wave equation (\ref{eq:wave:com}) implies 
\begin{align}\label{dphi:chi}
\partial_\phi(nd\mu)=0\qquad\Rightarrow\qquad\partial_\phi\int^\chi_0nd\mu =0,
\end{align}
which shows that the number of particles enclosed by the sphere of radius $r(\chi,\phi)$ is the same for all $\phi$. 
\end{rem}
\begin{prop}
Let $r(\chi),\rho[r(\chi)]$ be an initial configuration  on $\{\phi=0\}\times[0,B]$, with $\frac{\partial r}{\partial\phi}=0$, for a ``hard star'' with $N$ particles and non-negative pressure which vanishes only at the boundary $p=0$, $\chi=B$.
Then $r$ is a critical point of the mass functional
\begin{equation}
  M[r]=\int_0^{B}\rho(\chi)\, 4\pi r^2 \frac{\partial r}{\partial \chi}\ud \chi
\end{equation}
under variations which preserve the \emph{total number of particles} $N$, 
if and only if the associated density $\rho$ solves the equation
\begin{equation}\label{eq:drhodr}
  \frac{\ud \rho}{\ud r}=-\frac{2\rho-1}{r-2m}\Bigl(\frac{m}{r}+4\pi r^2 (\rho-1)\Bigr)\,.
\end{equation}

\end{prop}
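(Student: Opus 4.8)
The plan is to compute the first variation of $M[r]$ directly, using that in the comoving coordinates the density is determined by $r$ alone. First I would make the dependence explicit: from \eqref{def:n}, \eqref{eq:rho:n} we have $\rho = \frac{1}{2}(n^2+1)$ with $n = \frac{1}{4\pi r^2}e^{-\omega}$, and $e^{-\omega}$ is fixed by the static mass relation \eqref{eq:mass:static}, namely $e^{-\omega} = (1-2m/r)^{1/2}(\partial r/\partial\chi)^{-1}$, where $m(\chi) = \int_0^\chi 4\pi r^2 \rho \,\partial_\chi r\,\ud\chi'$ by \eqref{eq:m:chi} (equivalently $dm/dr = 4\pi r^2\rho$). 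The key point is that the constraint $N = \int_0^B n\,\ud\mu = B$ is automatically built into the choice of $\chi$: varying $r\mapsto r+\epsilon\, \delta r$ with endpoints fixed appropriately keeps the total particle number $N=B$ fixed by construction, so this is the ``particle-number-preserving'' class of variations. Thus I would treat $M$ as an unconstrained functional of $r(\chi)$ (with $m$ a nonlocal functional of $r$) and set $\delta M = 0$.

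The main computational step is to differentiate
\[
  M[r] = \int_0^B \rho\, 4\pi r^2\, \partial_\chi r\, \ud\chi
\]
treating $\rho = \rho(r,\partial_\chi r; m[r])$. Since $4\pi r^2 \rho\, \partial_\chi r = \partial_\chi m$, one actually has $M[r] = m(B) - m(0) = m(B)$, so $\delta M = \delta m\rvert_{\chi=B}$; this is the slick route. To compute $\delta m(B)$ I would linearise the relation defining $m$ together with \eqref{def:n}, \eqref{eq:rho:n}: from $2\rho - 1 = n^2 = \frac{1}{(4\pi r^2)^2}e^{-2\omega} = \frac{1-2m/r}{(4\pi r^2)^2 (\partial_\chi r)^2}$ one gets an equation relating $\rho$, $r$, $m$ and $\partial_\chi r$ pointwise; differentiating this in $\chi$ and using $dm/dr = 4\pi r^2\rho$ reproduces exactly the TOV equation \eqref{eq:rho:ode}, which suggests the cleanest path: show that the Euler–Lagrange equation for $m(B)$ under variations of $r$ (with $\rho$ eliminated via the algebraic constraint) is precisely \eqref{eq:drhodr}. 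Concretely, I would write $\delta(\partial_\chi m) = \partial_\chi(\delta m)$, express $\delta(\partial_\chi m)$ in terms of $\delta r$, $\delta(\partial_\chi r)$ and $\delta m$ using the constraint, obtain a linear first-order ODE for $\delta m$ along $\chi$, integrate from $0$ to $B$ with $\delta m(0) = 0$, and demand that the boundary term $\delta m(B)$ vanish for all admissible $\delta r$.

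For the converse direction I would simply note that each step is reversible: if $\rho$ satisfies \eqref{eq:drhodr} then the linear ODE for $\delta m$ integrates to give a total-derivative (boundary-only) structure, and the boundary contribution vanishes because $p = \rho - 1 = 0$ at $\chi = B$ — this is where the hypothesis $p\rvert_{\chi=B}=0$ enters crucially, since the boundary term in the first variation is proportional to $p\, r^2\, \delta r\rvert_B$ (compare \eqref{eq:m:phi}, which shows $\partial_\phi m \propto p\, r^2 \partial_\phi r$). So vanishing pressure at the boundary is exactly what kills the only obstruction to criticality. I expect the main obstacle to be bookkeeping: carefully tracking the nonlocal dependence of $m$ on $r$ through the integral, and making sure the variation $\delta r$ is taken in the correct class (fixed at $\chi=0$ where $r=0$, and consistent with $N=B$ at $\chi=B$) so that no spurious boundary terms at $\chi=0$ appear and the $\chi=B$ boundary term is genuinely proportional to $p$. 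Once the variation is organised as $\delta M = \delta m(B) = \int_0^B (\text{EL expression})\,\delta r\,\ud\chi + (\text{boundary term})$, matching the bracketed expression to \eqref{eq:drhodr} and invoking $p\rvert_B = 0$ finishes both directions.
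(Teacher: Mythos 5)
Your proposal is correct and follows essentially the same route as the paper: expressing $n$ (hence $\rho$) through the mass relation in comoving coordinates, reducing $\dot M$ to $\dot m(B)$, deriving a linear first-order ODE for $\dot m$ in $\chi$, integrating with an integrating factor and by parts, and observing that the resulting boundary term is proportional to $p=\rho-1$ at $\chi=B$ so that criticality is equivalent to the TOV equation. The points you flag as potential obstacles (integrability of the integrating factor near the centre, and the role of the fixed interval $[0,N]$ in encoding the particle constraint) are exactly the ones the paper's proof addresses.
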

\smallskip

We remark that by assumption $\rho=1+p>1$, $\chi\in[0,B)$ and $r>2m$.\footnote{This is equivalent to the assumption that there  are no trapped shells.} 
Hence, we observe that the density of critical points to the mass functional satisfying \eqref{eq:drhodr} is manifestly decreasing.

By {\it variation} of $r(\chi)$ above we mean a 1-parameter family $r_\lambda,\rho_\lambda,\frac{\partial r_\lambda}{\partial\phi}$ of initial configurations, with $\lambda\in[-1,1]$, $r_0=r(\chi),\rho_0=\rho[r(\chi)]$, such that $\frac{\partial r_0}{\partial\phi}=0$.
Denote by
\begin{equation}
  \dot{M}:=\frac{\ud}{\ud \lambda} M[r_\lambda]\rvert_{\lambda=0}
\end{equation}
the first variation of the total mass and similarly for all other quantities, such as $\dot{\rho}$; in particular we set
\begin{equation}
  \dot{r}:=\frac{\ud}{\ud \lambda}r_\lambda(\chi)\rvert_{\lambda=0}.
\end{equation}
Note that $\chi$ and $\lambda$ are \emph{independent}, i.e., 
 $[\frac{\partial}{\partial \lambda},\frac{\partial}{\partial \chi}]=0$.
Also, this construction implies that all functions are defined on the fixed interval $[0,N]$ and it is here where the constraint on the total particle number is crucially used. We suppress the subscript $\lambda$ below for covenience.
\begin{proof}
Let $n$ be the number density of the given configuration. Choose the coordinate $\chi$ along $\phi=0$ according to (\ref{eq:chi:number}). The formulas \eqref{def:n}, \eqref{eq:mass:comoving}, \eqref{psi:rho} then give
\begin{equation}\label{eq:n:chi}
  n(\chi)=\frac{\sqrt{1-\frac{2m}{r}+(2\rho-1)(\frac{\partial r}{\partial\phi})^2}}{4\pi r^2\frac{\partial r}{\partial\chi}},
\end{equation}
where by assumption $\frac{\partial r}{\partial\chi }>0$. Since the background configuration is static, the first variation of $n$ reads
\begin{equation}\label{eq:ndot}
  \dot{n}=\frac{1}{\sqrt{1-\frac{2m}{r}}}\frac{-\frac{\dot{m}}{r}+\frac{m}{r^2}\dot{r}}{4\pi r^2 \frac{\partial r}{\partial \chi}}-2\frac{\sqrt{1-\frac{2m}{r}}}{4\pi r^3 \frac{\partial r}{\partial \chi}}\dot{r}-\frac{\sqrt{1-\frac{2m}{r}}}{4\pi r^2\bigl( \frac{\partial r}{\partial \chi} \bigr)^2}\frac{\partial \dot{r}}{\partial \chi}
\end{equation}
To avoid confusion we note that all the `undotted' functions in first variation formulas are evaluated at $\lambda=0$.
We compute $\dot{m}$ by taking the variation of  the equation:
  \begin{subequations}
  \begin{gather}
    \frac{\partial  m}{\partial \chi}=4\pi r^2 \rho \frac{\partial r}{\partial \chi}\\
    \frac{\partial \dot{m}}{\partial \chi}=8\pi r\dot{r}(\chi) \rho\frac{\partial r}{\partial \chi}+4\pi r^2 \dot{\rho}\frac{\partial r}{\partial \chi}+4\pi r^2 \rho\frac{\partial \dot{r}}{\partial \chi}
  \end{gather}
\end{subequations}
From \eqref{eq:rho:n} we also have
\begin{equation}
  \dot{\rho}=n\dot{n}
\end{equation}
Combining the above we obtain the following o.d.e.~for the variation of $\dot{m}$:
\begin{align}\label{eq:mdot}
      \frac{\partial \dot{m}}{\partial \chi}+\frac{1}{4\pi r^2 \frac{\partial r}{\partial \chi}}\frac{\dot{m}}{r}=&\,8\pi r\dot{r} \rho\frac{\partial r}{\partial \chi}+\frac{-2+5\frac{m}{r}}{4\pi r^3 \frac{\partial r}{\partial \chi}}\dot{r}+4\pi r^2 \bigl(\rho-n^2\bigr)\frac{\partial \dot{r}}{\partial \chi}\\
      \tag{by (\ref{eq:rho:n}}=&\,8\pi r\dot{r} \rho\frac{\partial r}{\partial \chi}+\frac{-2+5\frac{m}{r}}{4\pi r^3 \frac{\partial r}{\partial \chi}}\dot{r}-4\pi r^2 \bigl(\rho-1\bigr)\frac{\partial \dot{r}}{\partial \chi}
\end{align}
which we can solve for $\dot{m}$ using integrating factors and integrating by parts the $\frac{\ud\dot{r}}{\ud\chi}$ term:
\begin{align}\label{eq:m:dot:int}
  &\dot{m}(\chi) \exp\big\{\int_0^\chi\frac{\ud \chi}{4\pi r^3 \frac{\ud r}{\ud\chi}}\big\}\\
\notag  =&\int_0^\chi\bigg[8\pi r\dot{r} \rho\frac{\ud r}{\ud \chi}+\frac{-2+5\frac{m}{r}}{4\pi r^3 \frac{\partial r}{\partial \chi}}\dot{r}-4\pi r^2 \bigl(\rho-1\bigr)\frac{\ud \dot{r}}{\ud \chi}\bigg] \exp\big\{\int_0^{\chi'}\frac{\ud \chi}{4\pi r^3 \frac{\ud r}{\ud \chi}}\big\}\ud\chi'\\
 \notag =&\int_0^\chi\bigg[8\pi r (2\rho-1)\frac{\ud r}{\ud \chi}+\frac{-2+5\frac{m}{r}}{4\pi r^3 \frac{\ud r}{\ud \chi}}+4\pi r^2 \frac{\ud \rho}{\ud\chi}+\frac{\rho-1}{r\frac{\ud r}{\ud \chi}}\bigg] \dot{r}\exp\big\{\int_0^{\chi'}\frac{\ud \chi}{4\pi r^3 \frac{\ud r}{\ud \chi}}\big\}\ud\chi'\\
\notag&-4\pi r^2(\rho-1)\dot{r}\exp\big\{\int_0^\chi\frac{\ud \chi}{4\pi r^3 \frac{\ud r}{\ud \chi}}\big\}\bigg|_{\chi}
\end{align}
A remark is in order here for the integrability of the exponent of the integrating factor. Since $n=2\rho-2\ge1$, $r>2M$, from \eqref{eq:n:chi} it follows that
$\inf r^4(\frac{\ud r}{\ud \chi})^2>0$. Hence, the exponentiated integral is finite and in fact of order $r^2$, so the previous derivation is legitimate.

Evaluating both sides of \eqref{eq:m:dot:int} at $\chi=B$, the boundary term vanishes by virtue of the boundary condition:
\begin{equation}
  \rho-1=p,\qquad p=0\quad\text{: on the boundary.}
\end{equation}
Since $m(B)=M$ is the total mass, we see that the given distribution is a critical point of the mass functional, $\dot{M}=0$ for all $\dot{r}$, if and only if
\begin{align} \label{eq:dp:critical}
0=&\,8\pi r (2\rho-1)\frac{\ud r}{\ud \chi}+\frac{-2+5\frac{m}{r}}{4\pi r^3 \frac{\ud r}{\ud \chi}}+4\pi r^2 \frac{\ud \rho}{\ud\chi}+\frac{\rho-1}{r\frac{\ud r}{\ud \chi}}\\
\tag{using \eqref{eq:n:chi}}    \frac{\ud \rho}{\ud r}=&-\frac{2}{r} (2\rho-1)-n^2\frac{-2+5\frac{m}{r}}{r(1-\frac{2m}{r}) }-n^24\pi r^2\frac{\rho-1}{r(1-\frac{2m}{r})}\\
\tag{$n^2=2\rho-1$}  =&-\frac{2\rho-1}{r-2m}\bigg(2\frac{r-2m}{r}-2+5\frac{m}{r}+4\pi r^2(\rho-1)\bigg)\\
\notag=&-\frac{2\rho-1}{r-2m}\bigg(\frac{m}{r}+4\pi r^2(\rho-1)\bigg)
\end{align}
as asserted. Note that
\begin{equation}
  \dot{M}=\dot{m}(B)=\frac{\partial m}{\partial \lambda}(\chi=B)\rvert_{\lambda=0}
\end{equation}
is only true in the chosen coordinates under the particle constraint, because then  $\chi=B=N$ is indeed the  boundary of the family of solutions.
\end{proof}

Returning to \eqref{eq:m:dot:int} and using \eqref{eq:dp:critical} we see that for any critical point of the mass functional the following variational formula for $\dot{m}$ holds pointwise.
\begin{cor}\label{cor:dot:m}
  Let $\{(r,\rho)\}$ be a family of solutions to the hard phase equations with constant particle number $N$, 
  and assume that $(r=r_0,\rho=\rho_0)$ is a \emph{critical point} of the mass functional. Then
\begin{align}\label{dotmformula}
\dot{m}(\chi)=-4\pi r^2_0(\rho_0-1)\dot{r}(\chi), \qquad \chi\in [0,N]
\end{align}
where  $\chi$ is defined as in \eqref{eq:chi:number}.
\end{cor}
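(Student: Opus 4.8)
The plan is to read the identity off the already-derived integral formula \eqref{eq:m:dot:int} for $\dot m$, specialised to a critical point. That formula was obtained for an \emph{arbitrary} member of a particle-number-preserving family, purely by solving the linear transport equation \eqref{eq:mdot} for $\dot m$ with an integrating factor and then integrating the $\partial_\chi\dot r$ term by parts; in particular it is valid pointwise in $\chi\in[0,N]$, not only at the endpoint $\chi=B$. The sole place where criticality enters is that, by the preceding Proposition, the static background $(r_0,\rho_0)$ satisfies the TOV equation \eqref{eq:drhodr}, which is precisely the assertion that the coefficient of $\dot r$ inside the integral on the right-hand side of \eqref{eq:m:dot:int} --- i.e.\ the right-hand side of \eqref{eq:dp:critical} --- vanishes \emph{identically as a function of $\chi$}.

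Consequently, evaluating \eqref{eq:m:dot:int} along the static solution, the entire integral term drops out and only the boundary term at the upper limit survives (the contribution at $\chi=0$ vanishing since $r_0(0)=0$), leaving
\begin{equation*}
  \dot m(\chi)\,\exp\Big\{\int_0^\chi\frac{\ud\chi}{4\pi r_0^3\,\partial_\chi r_0}\Big\}
  = -4\pi r_0^2(\rho_0-1)\,\dot r(\chi)\,\exp\Big\{\int_0^\chi\frac{\ud\chi}{4\pi r_0^3\,\partial_\chi r_0}\Big\}.
\end{equation*}
It then remains only to cancel the common integrating factor, which is legitimate by the remark following \eqref{eq:m:dot:int}: since $n\ge 1$ and $r_0>2m_0$, the formula \eqref{eq:n:chi} for $n$ gives $\inf_\chi r_0^4(\partial_\chi r_0)^2>0$, so the exponent is a finite integral and the exponential is a strictly positive, bounded quantity (indeed comparable to $r_0^2$). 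Dividing through yields \eqref{dotmformula}.

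This argument requires essentially no new computation. The one point deserving a line of care is the claim that the coefficient in \eqref{eq:dp:critical} vanishes as an identity in $\chi$, rather than merely at $\chi=B$ as used in the Proposition; but this is immediate, since \eqref{eq:dp:critical} was derived by rewriting the TOV relation \eqref{eq:drhodr} using the static mass equation \eqref{eq:mass:static} together with $n^2=2\rho_0-1$, all of which hold at every shell $\chi\in[0,N]$ of the static star. Hence no genuine obstacle arises, and the corollary follows.
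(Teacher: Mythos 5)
Your proposal is correct and is essentially identical to the paper's own (one-line) justification: the paper likewise obtains \eqref{dotmformula} by returning to the pointwise integral formula \eqref{eq:m:dot:int}, noting that at a critical point the coefficient \eqref{eq:dp:critical} of $\dot r$ in the integrand vanishes identically by the TOV equation, and cancelling the integrating factor so that only the boundary term $-4\pi r_0^2(\rho_0-1)\dot r$ survives. Your added remarks on the vanishing of the $\chi=0$ contribution and the positivity of the exponential are sound and merely make explicit what the paper leaves implicit.
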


\subsection{Second variation}
\label{sec:variation:second}

We will show that hard  static stars with sufficiently small radius $R$ lie in a local minimum of the mass energy functional.
\begin{prop}\label{prop:critical}
The static  critical points of the mass functional \eqref{def:M} are local minima, i.e., $\ddot{M}[r]\ge0$, for $R=r(B)>0$ sufficiently small. Moreover $\ddot{M}[r]=0$ if and only if $\dot{r}=\partial_\phi\dot{r}=0$.
\end{prop}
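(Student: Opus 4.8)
The plan is to compute $\ddot{M}$ by taking a second variation of the mass functional $M[r]=\int_0^B\rho\,4\pi r^2\,\partial_\chi r\,\ud\chi$ and showing the resulting quadratic form in $(\dot r,\partial_\phi\dot r)$ is non-negative for small $R$. The natural starting point is $\dot M=\dot m(B)$ together with the pointwise identity $\dot m=-4\pi r_0^2(\rho_0-1)\dot r$ from Corollary~\ref{cor:dot:m}. Differentiating once more in $\lambda$ and evaluating at the critical point, I would express $\ddot M=\ddot m(B)$ and then return to the o.d.e.\ \eqref{eq:mdot} for $\dot m$, differentiate it once more to obtain a transport equation for $\ddot m$ along $\chi$, and integrate it, exactly mimicking the integrating-factor computation in the proof of the first variation. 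The boundary term at $\chi=B$ will again be killed by the vanishing-pressure condition $\rho_0-1=p=0$, so $\ddot M$ reduces to a $\chi$-integral of a quadratic expression in $\dot r$, $\partial_\chi\dot r$, $\partial_\phi\dot r$ and $\dot\rho$, with coefficients depending on the static background $(r_0,\rho_0,m_0)$.

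The key simplification is that on the static background $\rho_0-1=\mathcal O(R^2)$, $m_0/r_0^3\sim 4\pi/3$, $r_0'\in[\tfrac12,1]$ (all from Prop.~\ref{prop:exist:stars} and \eqref{eq:r:prime:approx}), and $1-2m_0/r_0\geq\tfrac12$. So in the quadratic form for $\ddot M$ the ``dangerous'' indefinite cross terms all carry a factor $\rho_0-1=\mathcal O(R^2)$ or similar smallness, while the manifestly positive terms — which I expect to be of the form $\int(\partial_\phi\dot r)^2(\cdots)\,\ud\chi$ coming from the $(\partial_\phi r)^2$ dependence in \eqref{eq:n:chi}/\eqref{eq:mass:comoving}, plus a positive contribution from the ``spatial'' part governed by the operator $H_0=-\partial_{r_0}^2-(2/r_0)\partial_{r_0}+2/r_0^2$ hinted at in the linearised-system section — come with $\mathcal O(1)$ coefficients. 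A Hardy/Poincaré-type inequality on $[0,B]$ (using the regularity $\dot r(0)=0$ at the center, or rather $\dot r/r_0$ bounded) will be needed to absorb $\int(\dot r/r_0)^2$ against $\int(\partial_\chi\dot r)^2$; then for $R$ small enough the positive part dominates and $\ddot M\geq 0$. I would be careful to change variables between $\chi$ and $r_0$ freely using $\ud r_0=r_0'\,\ud\chi$ with $r_0'$ comparable to $1$.

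For the rigidity statement $\ddot M=0\iff\dot r=\partial_\phi\dot r=0$: the $\Leftarrow$ direction is immediate since the quadratic form vanishes on the zero variation. For $\Rightarrow$, once $\ddot M$ is written as a sum of a strictly positive definite part (after the smallness absorption, something like $c\int[(\dot r/r_0)^2+(\partial_\phi\dot r)^2+r_0^4(\partial_\chi\dot r)^2]\,\ud\chi$, matching the energy $\mathcal E$ of Prop.~\ref{prop:linenest}) plus genuinely non-negative leftover terms, vanishing of $\ddot M$ forces each term to vanish, hence $\partial_\phi\dot r\equiv0$ and $\partial_\chi\dot r\equiv0$, and then $\dot r/r_0\equiv0$ gives $\dot r\equiv0$. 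One should double-check that the particle-number constraint has been correctly used so that no zero-mode (e.g.\ a pure rescaling of $\chi$) sneaks in; this is precisely why the domain $[0,N]$ is fixed and $\dot r$ is genuinely free only subject to regularity at $0$ and at $B$.

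The main obstacle I anticipate is organizing the second-variation computation so that the smallness in $R$ is manifest: naively differentiating \eqref{eq:m:dot:int} produces many terms, and one must track which coefficients are $\mathcal O(1)$ versus $\mathcal O(R^2)$, handle the terms involving $\dot\rho=n_0\dot n$ (which itself contains $\partial_\chi\dot r$ and $\partial_\phi\dot r$ via \eqref{eq:ndot}) and integrate by parts to trade derivatives, and finally invoke the right weighted Hardy inequality with the correct boundary behaviour at $r_0=0$ and at $\chi=B$. Getting the positive definite ``core'' to come out in exactly the form of the energy $\mathcal E(\phi)$ is the crux, and is what ties this proposition to the linear stability result later in the paper.
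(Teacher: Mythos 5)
Your overall route is exactly the paper's: differentiate the mass transport equation \eqref{eq:mdot} once more in $\lambda$, integrate with the same integrating factor, observe that the double-dotted terms cancel by the TOV equation \eqref{eq:drhodr} and that the boundary term dies because $\rho_0-1=p=0$ at $\chi=B$, and thereby reduce $\ddot M$ to an explicit quadratic form in $\dot r$, $\partial_\chi\dot r$, $\partial_\phi\dot r$ (the paper's \eqref{ddotMformula}), whose positivity is then read off using the small-star asymptotics and a single Cauchy--Schwarz on the cross term $\dot r\,\partial_\chi\dot r$. The rigidity statement is handled exactly as you describe.

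The one step you should not rely on is the Hardy/Poincar\'e inequality you propose for absorbing $\int(\dot r/r_0)^2$ against the gradient term. In the relevant weights that inequality is \emph{false}: the gradient term in \eqref{ddotMformula} carries the coefficient $\frac{1-2m/r}{4\pi r^2(\partial_\chi r)^3}\sim r_0^4$, and the estimate $\int_0^B(\dot r/r_0)^2\,\ud\chi\leq C\int_0^B r_0^4(\partial_\chi\dot r)^2\,\ud\chi$ has no uniform constant (test with $\dot r$ rising to $1$ on $[0,\delta]$ and constant thereafter --- the boundary condition at $\chi=B$ is Robin, not Dirichlet, so this is admissible). Fortunately it is also unnecessary: when you actually carry out the second-variation computation, the quadratic form contains the manifestly positive zeroth-order term
\begin{equation*}
2\,\frac{1-\frac{2m}{r}}{4\pi r^4\frac{\partial r}{\partial\chi}}\,(\dot r)^2\;\sim\;\frac{\dot r^2}{r_0^2}\,,
\end{equation*}
whose coefficient dominates (for $r_0\leq R\ll1$) the $\mathcal O(1)$-coefficient negative $(\dot r)^2$ terms such as $-6\frac{4\pi r(\rho-1)+m/r^2}{4\pi r^3\partial_\chi r}(\dot r)^2$, as well as the $\varepsilon$-loss from Cauchy--Schwarz applied to the single mixed term; the $(\partial_\phi\dot r)^2$ term is positive outright since $\rho\geq1$. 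So the coercive ``core'' $c\int[(\dot r/r_0)^2+(\partial_\phi\dot r)^2+r_0^4(\partial_\chi\dot r)^2]\,\ud\chi$ you correctly anticipate (matching Proposition~\ref{ddotMenergy}) comes directly out of the formula, not from a functional inequality.
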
 
\begin{proof}
Unlike the derivations for the first variation in the previous subsection, we have take into account the contribution of $\frac{\partial r}{\partial\phi}$ in $\ddot{M}$. 
Since $\frac{\partial r}{\partial\phi}$ enters as a square in \eqref{eq:n:chi} and $\frac{\partial r}{\partial\phi}$ is zero for the static background, from the terms containing derivatives in $\partial_\phi$ only a kinetic term containing $(\frac{\partial \dot{r}}{\partial\phi})^2$ will survive in the second variation.  

We compute
\begin{align}
\frac{\partial \ddot{m}}{\partial \chi}=&\;8\pi(\dot{r})^2\rho\frac{\partial r}{\partial \chi}+8\pi r\ddot{r}\rho\frac{\partial r}{\partial \chi}+\frac{2}{r}\sqrt{1-\frac{2m}{r}}\dot{r}\dot{n}+16\pi r\dot{r}\rho\frac{\partial \dot{r}}{\partial \chi}\\
\notag&+\frac{\sqrt{1-\frac{2m}{r}}}{\frac{\partial r}{\partial\chi}}\frac{\partial\dot{r}}{\partial\chi}\dot{n}+4\pi r^2\rho\frac{\partial\ddot{r}}{\partial\chi}
+\frac{4\pi r(\rho-1)+\frac{m}{r^2}}{\sqrt{1-\frac{2m}{r}}}\dot{r}\dot{n}+\sqrt{1-\frac{2m}{r}}\ddot{n}
\end{align}
and
\begin{align}
\ddot{n}=&\;\frac{1}{\sqrt{1-\frac{2m}{r}}}\frac{-\frac{\ddot{m}}{r}+\frac{m}{r^2}\ddot{r}}{4\pi r^2 \frac{\partial r}{\partial \chi}}-2\frac{\sqrt{1-\frac{2m}{r}}}{4\pi r^3 \frac{\partial r}{\partial \chi}}\ddot{r}-\frac{\sqrt{1-\frac{2m}{r}}}{4\pi r^2\bigl( \frac{\partial r}{\partial \chi} \bigr)^2}\frac{\partial \ddot{r}}{\partial \chi}\\
\notag&-\frac{(4\pi r(\rho-1)+\frac{m}{r^2})^2}{(1-\frac{2m}{r})^\frac{3}{2}4\pi r^2 \frac{\partial r}{\partial \chi}}(\dot{r})^2-6\frac{4\pi r(\rho-1)+\frac{m}{r^2}}{\sqrt{1-\frac{2m}{r}}4\pi r^3 \frac{\partial r}{\partial \chi}}(\dot{r})^2
+4\frac{\sqrt{1-\frac{2m}{r}}}{4\pi r^3(\frac{\partial r}{\partial\chi})^2}\dot{r}\frac{\partial\dot{r}}{\partial\chi}\displaybreak[0]\\
\notag&-2\frac{4\pi r(\rho-1)+\frac{m}{r^2}}{\sqrt{1-\frac{2m}{r}}4\pi r^2(\frac{\partial r}{\partial\chi})^2}\dot{r}\frac{\partial\dot{r}}{\partial\chi}
+6\frac{\sqrt{1-\frac{2m}{r}}}{4\pi r^4\frac{\partial r}{\partial\chi}}(\dot{r})^2
+2\frac{\sqrt{1-\frac{2m}{r}}}{4\pi r^2(\frac{\partial r}{\partial\chi})^3}(\frac{\partial \dot{r}}{\partial\chi})^2\\
\notag&+\frac{1}{\sqrt{1-\frac{2m}{r}}}\frac{2\rho-1}{4\pi r^2\frac{\partial r}{\partial\chi}}(\frac{\partial \dot{r}}{\partial\phi})^2
\end{align}
By \eqref{eq:ndot} we also have
\begin{align}
\dot{r}\dot{n}=\frac{4\pi r(\rho-1)+\frac{m}{r^2}}{\sqrt{1-\frac{2m}{r}}4\pi r^2 \frac{\partial r}{\partial \chi}}(\dot{r})^2-2\frac{\sqrt{1-\frac{2m}{r}}}{4\pi r^3 \frac{\partial r}{\partial \chi}}(\dot{r})^2-\frac{\sqrt{1-\frac{2m}{r}}}{4\pi r^2\bigl( \frac{\partial r}{\partial \chi} \bigr)^2}\dot{r}\frac{\partial \dot{r}}{\partial \chi}
\end{align}
Hence, the equation for $\frac{\partial \ddot{m}}{\partial \chi}$ reads:
\begin{align}\label{eq:mddot}
\frac{\partial \ddot{m}}{\partial \chi}=&\;8\pi r\ddot{r}\rho\frac{\partial r}{\partial \chi}
+\frac{-\frac{\ddot{m}}{r}+\frac{m}{r^2}\ddot{r}}{4\pi r^2 \frac{\partial r}{\partial \chi}}-2\frac{1-\frac{2m}{r}}{4\pi r^3 \frac{\partial r}{\partial \chi}}\ddot{r}-4\pi r^2(\rho-1)\frac{\partial\ddot{r}}{\partial\chi}\\
\notag&+8\pi\rho\frac{\partial r}{\partial \chi}(\dot{r})^2+8\pi r\rho\frac{\partial}{\partial\chi}(\dot{r})^2-6\frac{4\pi r(\rho-1)+\frac{m}{r^2}}{4\pi r^3 \frac{\partial r}{\partial \chi}}(\dot{r})^2\\
\notag&+2\frac{1-\frac{2m}{r}}{4\pi r^4\frac{\partial r}{\partial\chi}}(\dot{r})^2-2\frac{4\pi r(\rho-1)+\frac{m}{r^2}}{4\pi r^2(\frac{\partial r}{\partial\chi})^2}\dot{r}\frac{\partial\dot{r}}{\partial\chi}
+\frac{1-\frac{2m}{r}}{4\pi r^2(\frac{\partial r}{\partial\chi})^3}(\frac{\partial \dot{r}}{\partial\chi})^2\\
\notag&+\frac{2\rho-1}{4\pi r^2\frac{\partial r}{\partial\chi}}(\frac{\partial \dot{r}}{\partial\phi})^2
\end{align}
Note that the double dotted terms in \eqref{eq:mddot} are exactly analogous to the ones in the first variation equation \eqref{eq:mdot} of $\frac{\partial \dot{m}}{\partial\chi}$. Using integrating factors as in \eqref{eq:m:dot:int} and integrating by parts, the resulting expression containing double dotted terms vanishes identically due to the fact that $\rho$ satisfies \eqref{eq:drhodr}. Thus, we arrive at the equation:
\begin{align}\label{ddotMformula}
&\ddot{M}\exp\bigg[\int_0^B\frac{d\chi}{4\pi r^3\frac{\partial r}{\partial\chi}}\bigg]\\
\notag=&\int_0^{B}\bigg[8\pi\rho\frac{\partial r}{\partial \chi}(\dot{r})^2+16\pi r\rho\dot{r}\frac{\partial\dot{r}}{\partial\chi}-6\frac{4\pi r(\rho-1)+\frac{m}{r^2}}{4\pi r^3 \frac{\partial r}{\partial \chi}}(\dot{r})^2
+2\frac{1-\frac{2m}{r}}{4\pi r^4\frac{\partial r}{\partial\chi}}(\dot{r})^2\\
\notag&-2\frac{4\pi r(\rho-1)+\frac{m}{r^2}}{4\pi r^2(\frac{\partial r}{\partial\chi})^2}\dot{r}\frac{\partial\dot{r}}{\partial\chi}
+\frac{1-\frac{2m}{r}}{4\pi r^2(\frac{\partial r}{\partial\chi})^3}(\frac{\partial \dot{r}}{\partial\chi})^2+\frac{2\rho-1}{4\pi r^2\frac{\partial r}{\partial\chi}}(\frac{\partial \dot{r}}{\partial\phi})^2\bigg]\cdot\\
\notag&\cdot\exp\bigg[\int^{\chi'}_0\frac{d\chi'}{4\pi r^3\frac{\partial r}{\partial\chi}}\bigg]d\chi
\end{align}
Now we make use of the \emph{smallness condition} for the stars by appealing to the behaviour at the center as given in  Proposition \ref{prop:exist:stars}: For $0\leq r\leq R\ll1$ we have the asymptotics $m=O(r^3)$, $1-\frac{2m}{r}\sim 1,4\pi
r(\rho-1)+\frac{m}{r^2}\sim r$. Observe that as $r\rightarrow0$ the dominant zeroth order dot term in the last integral is 
\begin{align*}
2\frac{1-\frac{2m}{r}}{4\pi r^4\frac{\partial r}{\partial\chi}}(\dot{r})^2\sim \frac{\dot{r}^2}{r^2}
\end{align*}
and having a positive sign. We use the latter to absorb the negative $(\dot{r})^2$ terms and we handle the mixed dotted term $\dot{r}\frac{\partial\dot{r}}{\partial\chi}$ by applying Cauchy's inequality
\begin{align}\label{mixeddotCauchy}
-2\frac{4\pi r(\rho-1)+\frac{m}{r^2}}{4\pi r^2\frac{\partial r}{\partial\chi}}\dot{r}\frac{\partial\dot{r}}{\partial\chi}\geq -\frac{\varepsilon}{r^2(\frac{\partial r}{\partial\chi})^2}(\frac{\partial\dot{r}}{\partial\chi})^2- \frac{C}{\varepsilon}(\dot{r})^2
\end{align}
Notice that the term containing $(\frac{\partial \dot{r}}{\partial\phi})^2$ is manifestly positive, since $\rho\ge1$. This term represents the ``kinetic energy'' of deviations from equilibrium to second order. We conclude that $\ddot{M}[r]\ge0$, for static hard stars with small radius. Finally the case $\ddot{M}[r]=0$ holds if and only if $\dot{r}=0=\partial_\phi\dot{r}$.
\end{proof}

\subsection{Quantities controlled by the second variation}

{\it Notation}: We denote all the variables of the static solution to the hard phase by a $\rho_0,r_0,m_0$ etc.  Relations of the form $a\sim b$ below mean that there exist constants $c,C>0$ such that $cb\leq a\leq Cb$. 
\begin{prop}\label{ddotMenergy}
The second variation of the mass functional $M[r]$ of small hard stars is equivalent to the following energy:
\begin{align}\label{2ndvaren}
\ddot{M}[r_0]\sim\int_0^{B}\frac{\dot{r}^2}{r_0^2}+r_0^4(\frac{\partial\dot{r}}{\partial\chi})^2+(\frac{\partial\dot{r}}{\partial\phi})^2d\chi
\end{align}
\end{prop}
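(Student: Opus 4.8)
The plan is to argue directly from the closed formula \eqref{ddotMformula}, estimating its integrand term by term with the help of the quantitative asymptotics of the background from Proposition~\ref{prop:exist:stars}. First I would record the scalings that will be used repeatedly. From the static mass relation \eqref{eq:mass:static} together with \eqref{def:n} one gets $\partial_\chi r_0=\sqrt{1-2m_0/r_0}\,/(4\pi r_0^2 n_0)$ where $n_0=\sqrt{2\rho_0-1}$; hence $\partial_\chi r_0\sim r_0^{-2}$, and likewise $m_0\sim r_0^3$, $1-2m_0/r_0\sim 1$, $\rho_0\sim 1$, $0\le\rho_0-1\lesssim R^2$, $n_0\sim 1$, and (from the TOV equation \eqref{eq:drhodr}) $|\mathrm{d}\rho_0/\mathrm{d}r_0|\sim r_0$, so that $|\partial_\chi\rho_0|\sim r_0^{-1}$. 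Moreover the integrating factor $E(\chi):=\exp[\int_0^\chi \mathrm{d}\chi'/(4\pi r_0^3\partial_\chi r_0)]$ equals $\exp[2\pi r_0^2(1+O(R^2))]$ after the change of variables $\mathrm{d}\chi'=\mathrm{d}r_0/\partial_\chi r_0$ (the observation already made after \eqref{eq:m:dot:int}), so $1\le E\le E(B)\le C$ uniformly in the family for $R$ small. It therefore suffices to bound the bracketed integrand of \eqref{ddotMformula}, call it $I$, both above by and below by a fixed multiple of $J:=\dot r^2/r_0^2+r_0^4(\partial_\chi\dot r)^2+(\partial_\phi\dot r)^2$.

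The upper bound $\ddot M[r_0]\lesssim\int_0^B J\,\mathrm{d}\chi$ is obtained by the triangle inequality, estimating the seven terms of $I$ pointwise. The four manifestly non-negative terms, those with coefficients $8\pi\rho_0\partial_\chi r_0$, $2(1-2m_0/r_0)/(4\pi r_0^4\partial_\chi r_0)$, $(1-2m_0/r_0)/(4\pi r_0^2(\partial_\chi r_0)^3)$ and $(2\rho_0-1)/(4\pi r_0^2\partial_\chi r_0)$, are by the scalings above comparable to $\dot r^2/r_0^2$, $\dot r^2/r_0^2$, $r_0^4(\partial_\chi\dot r)^2$ and $(\partial_\phi\dot r)^2$ respectively; the pure $\dot r^2$-term with coefficient $-6(4\pi r_0(\rho_0-1)+m_0/r_0^2)/(4\pi r_0^3\partial_\chi r_0)$ has coefficient of size $1$, so it is $\lesssim\dot r^2=r_0^2\cdot\dot r^2/r_0^2\le R^2\,\dot r^2/r_0^2$; and the two mixed terms are controlled by $|\dot r\,\partial_\chi\dot r|\le\tfrac12(r_0^{-2}\dot r^2+r_0^2(\partial_\chi\dot r)^2)$, which after inserting their coefficient scalings ($\sim r_0$ and $\sim r_0^3$) bounds them by $\lesssim J$ and $\lesssim R^2 J$ respectively. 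Summing and using $E\sim 1$ gives the claim.

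For the lower bound the only subtlety is the mixed term $16\pi r_0\rho_0\dot r\,\partial_\chi\dot r$: its natural positive partners $8\pi\rho_0(\partial_\chi r_0)\dot r^2$ and $2(1-2m_0/r_0)\dot r^2/(4\pi r_0^4\partial_\chi r_0)$ carry essentially the same numerical constant as the amount of $(\partial_\chi\dot r)^2$ that Young's inequality would force one to borrow, so a crude Cauchy--Schwarz does not close. The remedy is that these terms share the coefficient $\rho_0$ and hence assemble into a total derivative, $8\pi\rho_0(\partial_\chi r_0)\dot r^2+16\pi r_0\rho_0\dot r\,\partial_\chi\dot r=\partial_\chi(8\pi\rho_0 r_0\dot r^2)-8\pi(\partial_\chi\rho_0)r_0\dot r^2$. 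Integrating $I\cdot E$ over $[0,B]$ and integrating this piece by parts produces the boundary term $8\pi\rho_0(B)\,r_0(B)\,\dot r(B)^2 E(B)\ge 0$ (the endpoint $\chi=0$ contributes nothing, since $r_0=\dot r=0$ on the central worldline) together with $-8\pi(\partial_\chi\rho_0)r_0\dot r^2E$ and $-8\pi\rho_0 r_0\dot r^2E'$; since $|\partial_\chi\rho_0|\sim r_0^{-1}$ and $0<E'=E\,n_0/(r_0\sqrt{1-2m_0/r_0})\sim r_0^{-1}$, both of these are of size $\dot r^2=r_0^2\cdot\dot r^2/r_0^2\le R^2\,\dot r^2/r_0^2$. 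Discarding the non-negative boundary term and absorbing all the $O(R^2)$ errors --- namely these two, the negative pure $\dot r^2$-term, and the Young remainders from the last mixed term --- into the three positive-definite terms $2(1-2m_0/r_0)\dot r^2/(4\pi r_0^4\partial_\chi r_0)\sim\dot r^2/r_0^2$, $(1-2m_0/r_0)(\partial_\chi\dot r)^2/(4\pi r_0^2(\partial_\chi r_0)^3)\sim r_0^4(\partial_\chi\dot r)^2$ and $(2\rho_0-1)(\partial_\phi\dot r)^2/(4\pi r_0^2\partial_\chi r_0)\sim(\partial_\phi\dot r)^2$ (legitimate once $R$ is small enough that each error is at most, say, half of the corresponding positive term), and finally using $1\le E\le C$, yields $\ddot M[r_0]\gtrsim\int_0^B J\,\mathrm{d}\chi$. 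Together with the upper bound this is \eqref{2ndvaren}. The one real obstacle is this borderline mixed term, which is exactly why the total-derivative observation rather than a bare Cauchy--Schwarz is essential; everything else is bookkeeping with the $R$-scalings of Proposition~\ref{prop:exist:stars}.
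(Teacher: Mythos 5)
Your proposal is correct, and it follows the same overall route as the paper --- a direct term-by-term estimation of the explicit formula \eqref{ddotMformula} using the center asymptotics of Lemma~\ref{lemma:stars:asymptotics} --- but it supplies a step that the paper's own (very terse) proof omits and that is genuinely needed for the coercive direction. The paper disposes of the mixed terms by pointing to the Cauchy inequality \eqref{mixeddotCauchy}, which only treats the term with coefficient $\sim r_0^3$; that term is subcritical (its coefficient carries two extra powers of $r_0$ relative to the critical scaling $\sqrt{r_0^{-2}\cdot r_0^{4}}=r_0$) and is indeed harmless. The other mixed term, $16\pi r_0\rho_0\,\dot{r}\,\partial_\chi\dot{r}$, is exactly critical: with $\partial_\chi r_0=(4\pi r_0^2)^{-1}(1+O(R^2))$ the leading-order quadratic form assembled from it and its positive partners is the degenerate perfect square
\begin{equation*}
  \frac{4\dot{r}^2}{r_0^2}+16\pi r_0\,\dot{r}\,\partial_\chi\dot{r}+16\pi^2r_0^4\bigl(\partial_\chi\dot{r}\bigr)^2=\Bigl(\frac{2\dot{r}}{r_0}+4\pi r_0^2\,\partial_\chi\dot{r}\Bigr)^2,
\end{equation*}
so no pointwise Cauchy--Schwarz leaves room to absorb the remaining $O(R^2)$ errors, let alone to recover the two positive terms of \eqref{2ndvaren} separately. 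Your total-derivative identity $8\pi\rho_0(\partial_\chi r_0)\dot{r}^2+16\pi r_0\rho_0\dot{r}\,\partial_\chi\dot{r}=\partial_\chi(8\pi\rho_0r_0\dot{r}^2)-8\pi(\partial_\chi\rho_0)r_0\dot{r}^2$, integrated by parts against the bounded integrating factor, converts this critical contribution into a non-negative boundary term at $\chi=B$ plus genuine $O(R^2)\,\dot{r}^2/r_0^2$ errors; this Hardy-type mechanism, which makes the integral (though not the integrand) coercive, is exactly the right fix, and it is what turns the paper's one-line assertion into a proof. Two small points. First, in the upper bound your weights for this same term are off: with $|\dot{r}\,\partial_\chi\dot{r}|\le\tfrac12\bigl(r_0^{-2}\dot{r}^2+r_0^2(\partial_\chi\dot{r})^2\bigr)$ the coefficient $\sim r_0$ produces $r_0^3(\partial_\chi\dot{r})^2=r_0^{-1}\cdot r_0^4(\partial_\chi\dot{r})^2$, which is not $\lesssim J$ near the center; split instead as $(\dot{r}/r_0)\cdot(r_0^2\partial_\chi\dot{r})$, which gives the bound $8\pi\bigl(\dot{r}^2/r_0^2+r_0^4(\partial_\chi\dot{r})^2\bigr)$ directly. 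Second, the vanishing of the boundary term at $\chi=0$ requires $\dot{r}(0)=0$; you state this, and it is consistent with the condition imposed on the linearised variable at the center in \eqref{linr1bdcond}, but it should be recorded as part of the admissible class of variations.
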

The proof makes reference to the leading order behaviour of the background solution at the center, which we summarize in the following Lemma:

\begin{lemma}\label{lemma:stars:asymptotics}
For  hard stars of \emph{small} radius $R_0$,
\begin{subequations}
\begin{gather}\label{leadasym}
 n_0,\rho_0=1+O(R_0^2-r_0^2),\qquad m_0=\frac{4\pi}{3}r_0^3[1+O(R_0^2-r_0^2)]\\
\label{leadasymb}\frac{\partial r_0}{\partial\chi}=\frac{1+O(R_0^2)}{4\pi r_0^2}+O(1),\qquad \chi \sim r_0^\frac{1}{3},\\ 
\label{leadasymc}e^{-\omega_0}=4\pi r_0^2[1+O(R_0^2-r_0^2)],\;\;\;
\partial_\chi \rho_0=-\frac{\frac{1}{3}+O(R_0^2-r_0^2)}{r_0}+O(r_0),
\end{gather}
\end{subequations}
where the remainders are analytic functions in $r_0$ and their derivatives satisfy analogous bounds.
\end{lemma}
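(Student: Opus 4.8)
The plan is to extract every assertion of the Lemma from the static system \eqref{eq:static:tilde} for $(\mt,\rhot)$, the crude bounds already proved in Proposition~\ref{prop:exist:stars}, and the algebraic identities \eqref{def:n}, \eqref{eq:mass:static} together with $n_0=\sqrt{2\rho_0-1}$. I would proceed in three steps: (i) bootstrap the $\mathcal{O}(R_0^2)$ bounds of Proposition~\ref{prop:exist:stars} into the sharp leading-order asymptotics \eqref{leadasym}; (ii) read off the comoving geometry \eqref{leadasymb}, \eqref{leadasymc} from those algebraic identities; and (iii) upgrade the regularity of the background to analyticity near the centre $r_0=0$.

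For step (i): Proposition~\ref{prop:exist:stars} shows that on $[0,R_0]$ both the prefactor $\frac{1+2\rhot}{1-\frac{8\pi}{3}r^2-\frac{2\mt}{r}}$ and the bracket $\bigl[3\rhot+1+\frac{3\mt}{4\pi r^3}\bigr]$ in \eqref{eq:static:rho:tilde} are $1+\mathcal{O}(R_0^2)$, so $\frac{\ud\rhot}{\ud r}=-\frac{4\pi r}{3}\bigl(1+\mathcal{O}(R_0^2)\bigr)$; integrating from $r$ to $R_0$ with $\rhot(R_0)=0$ gives $\rhot=\frac{2\pi}{3}(R_0^2-r^2)\bigl(1+\mathcal{O}(R_0^2)\bigr)$, whence $\rho_0,n_0=1+\mathcal{O}(R_0^2-r_0^2)$, and substituting back into $\mt=\int_0^{r}4\pi s^2\rhot(s)\,\ud s$ yields $m_0$ with the remainder of the form claimed in \eqref{leadasym}. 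Differentiating \eqref{eq:rho:ode} once more and reinserting these bounds gives $\partial_{r_0}\rho_0=-\frac{4\pi}{3}r_0\bigl(1+\mathcal{O}(R_0^2-r_0^2)\bigr)$, and iterating the same scheme controls all higher $r_0$-derivatives.

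For step (ii): combining \eqref{def:n} with the static mass equation \eqref{eq:mass:static} gives the two identities $e^{-\omega_0}=4\pi r_0^2 n_0$ and $\frac{\partial r_0}{\partial\chi}=e^{\omega_0}\sqrt{1-\frac{2m_0}{r_0}}=\frac{\sqrt{1-2m_0/r_0}}{4\pi r_0^2 n_0}$; inserting $n_0=1+\mathcal{O}(R_0^2-r_0^2)$ and $1-\frac{2m_0}{r_0}=1+\mathcal{O}(R_0^2)$ then produces \eqref{leadasymc} and \eqref{leadasymb}, and a further integration of $\partial_\chi r_0$ (with $\chi=0$ at $r_0=0$) recovers the stated relation between $\chi$ and $r_0$. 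Finally $\partial_\chi\rho_0=(\partial_{r_0}\rho_0)\,(\partial_\chi r_0)=-\frac{1/3+\mathcal{O}(R_0^2-r_0^2)}{r_0}+\mathcal{O}(r_0)$, completing \eqref{leadasymc}; the derivative bounds on the remainders follow by differentiating these relations and reusing the estimates of step (i).

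The one point I expect to require care beyond this bookkeeping is step (iii): $r_0=0$ is a (regular) singular point of the density equation \eqref{eq:rho:ode}, so analyticity of the background there is not automatic. I would desingularise via the substitution $\mt=r^3\nu$ (equivalently, passing to $m/r^3$), which turns \eqref{eq:static:tilde} into a system with analytic right-hand side and a Fuchsian singularity at $r=0$ for which the physical star is the \emph{unique} solution with $\nu(0)$ finite; standard analytic o.d.e.\ theory---or, alternatively, re-running the contraction mapping of Section~\ref{sec:exist:stars} in a space of real-analytic functions, with analytic dependence on the parameter $R_0$---then shows $\rho_0$ and $m_0$ to be real-analytic in $r_0$, in fact functions of $r_0^2$. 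Since $n_0$, $e^{-\omega_0}$, $\partial_\chi r_0$, the $\chi$--$r_0$ relation and $\partial_\chi\rho_0$ are obtained from $(\rho_0,m_0,r_0)$ by analytic operations---products, positive square roots, and a single integration---they inherit analyticity, and the remaining assertions reduce to the routine bookkeeping indicated above. I expect this analyticity argument at the centre, rather than any of the quantitative estimates, to be the only genuinely non-routine ingredient.
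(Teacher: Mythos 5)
Your proposal is correct and follows essentially the same route as the paper's proof: the paper likewise obtains \eqref{leadasym} from the density expansion $\rhot\simeq\frac{2\pi}{3}(R^2-r^2)$ of Section~\ref{sec:star:approximate} together with \eqref{eq:rho:n}, and then reads off $\partial_\chi r_0$, $e^{-\omega_0}$ and $\partial_\chi\rho_0$ from \eqref{eq:n:chi}, \eqref{def:n} and the TOV equation \eqref{eq:rho:ode}, exactly as in your step~(ii). Two places where you go beyond the paper are worth recording. First, the paper's input \eqref{eq:rho:approx} is derived there only \emph{formally}, by setting $\mt=0$ and separating variables; your step~(i), which integrates the full equation \eqref{eq:static:rho:tilde} from $r$ to $R_0$ and uses the a priori bounds of Proposition~\ref{prop:exist:stars} to control the prefactor and the bracket, is the honest quantitative version of that computation and is what the stated remainders actually require. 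Second, the paper's proof is silent on the analyticity of the remainders at $r_0=0$, even though this is asserted in the statement and invoked later (e.g.\ in Proposition~\ref{prop:highenest}); your desingularisation $\mt=r^3\nu$, with the physical star singled out as the unique solution having $\nu$ finite at the origin, is a sensible way to supply this, and you correctly identify it as the only non-routine ingredient. One small remark: your integration of $\partial_\chi r_0\sim(4\pi r_0^2)^{-1}$ gives $\chi\sim r_0^3$, i.e.\ $r_0\sim\chi^{1/3}$; the relation ``$\chi\sim r_0^{1/3}$'' printed in \eqref{leadasymb} is evidently a typo for this (it is $r_0\sim\chi^{1/3}$ that is used in the proof of Proposition~\ref{prop:linenest}), so your version of the relation is the correct one.
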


\begin{proof}
  The first estimate follows from \eqref{eq:rho:approx} and \eqref{eq:rho:n}.
  Plugging it subsequently into \eqref{eq:m:chi} and \eqref{eq:n:chi} we deduce the behaviours for $m_0$ and $\partial_\chi r_0$ respectively. The rate of $\partial_\chi r_0$ also yields the relation between $\chi$ and $r_0$. The function $e^{-\omega_0}$ is given by \eqref{def:n} in terms of $n_0,r_0$ and the asymptotic behaviour of $\partial_\chi\rho_0$ is computed from the TOV equation \eqref{eq:rho:ode} using the estimates for $m_0,\rho_0,\partial_\chi r_0$.
\end{proof}
\begin{proof}[Proof of the Proposition \ref{ddotMenergy}]
Going back to \eqref{ddotMformula} and replacing the background variables with their leading orders according to the previous lemma, we see from \eqref{mixeddotCauchy} that $\ddot{M}[r_0]$ controls the RHS of \eqref{2ndvaren}. The fact that $\ddot{M}[r_0]$ is also bounded by the aforementioned energy is evident.
\end{proof}
Note that the energy space \eqref{2ndvaren} is quite weak in that it does not even control the $\mathrm{L}^\infty$ norm of $\dot{r}$, as $r_0\rightarrow0$. This is not surprising as energies coming from conserved quantities in the context of the Einstein equations in spherical symmetry are generally fairly weak. An example of such an energy can be written down for the \textsl{AdS scalar field model}, where the $\dot{H}^1$ energy of the scalar field is conserved in the evolution. However, the latter does not provide adequate control of the solution. Indeed it is compatible with black hole formation,\footnote{In this case the mass aspect function $\frac{m}{r}$ becoming larger than $\frac{1}{2}$.} see \cite{Bizon:11} and the recent \cite{Moschidis:18}.

On the other hand, in the present context of small hard stars, we may use \eqref{2ndvaren} together with the first variation \eqref{cor:dot:m} of $m$ to obtain a satisfactory control of $\frac{m}{r}$ to first order. This crucially relies on the fact that we are linearising around a static hard star and it highlights the particularly ``stable structure'' that these equilibria enjoy. \footnote{Indeed, it was shown by Christodoulou \cite{Ch:93} that the quantity $\frac{m}{r}$ serves as a continuation criterion for the Einstein-massless scalar field equations in spherical symmetry, and  it can be expected that owing to the similarity of the equations --- c.f.~\eqref{Tmunu} --- the analogous criterion will be instrumental for the study of the hard phase model.}
\begin{prop}\label{moverrprop}
  Let $m_\lambda(\chi)$, $r_\lambda(\chi)$ be respectively the mass, and radius of a variation through a static hard star $(\lambda=0)$, defined on $\chi\in[0,B]$. Then the following estimate holds true:
\begin{align}\label{moverrest}
\big|(\frac{m}{r^{1+\epsilon}})\dot{}\,\big|\leq Cr^{\frac{1}{2}-\epsilon}_0\ddot{M}[r_0],&&0\leq \epsilon\leq\frac{1}{2},
\end{align}
in the whole domain $[0,B]$.
\end{prop}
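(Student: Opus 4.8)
The plan is to combine the pointwise first-variation formula for $\dot m$ from Corollary~\ref{cor:dot:m} with the energy equivalence of Proposition~\ref{ddotMenergy}, estimating everything in terms of the background asymptotics collected in Lemma~\ref{lemma:stars:asymptotics}. First I would compute the variation of $m/r^{1+\epsilon}$ directly:
\begin{equation*}
  \Bigl(\frac{m}{r^{1+\epsilon}}\Bigr)\dot{} = \frac{\dot m}{r_0^{1+\epsilon}} - (1+\epsilon)\frac{m_0}{r_0^{2+\epsilon}}\dot r .
\end{equation*}
Now insert $\dot m = -4\pi r_0^2(\rho_0-1)\dot r$ from \eqref{dotmformula} and use $m_0 = \tfrac{4\pi}{3}r_0^3(1+O(R_0^2))$ together with $\rho_0 - 1 = O(R_0^2 - r_0^2)$ from \eqref{leadasym}. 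The first term is then of size $O(R_0^2)\,r_0^{1-\epsilon}|\dot r|$ while the second is of size $r_0^{1-\epsilon}|\dot r|$; so the dominant contribution is $\lesssim r_0^{1-\epsilon}\,|\dot r(\chi)|$ pointwise in $\chi$. (Here I am using that $\partial_\chi\dot r$ does not enter $\dot m$, which is the crucial algebraic simplification coming from linearising around a \emph{static} star.)

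Next I would convert the pointwise bound on $|\dot r(\chi)|$ into an energy bound. Since $\dot r(\chi) = \dot r(0) + \int_0^\chi \partial_{\chi'}\dot r\,\ud\chi'$ and one expects $\dot r$ to be regular at the center, write $\dot r(\chi) = \int_0^\chi \partial_{\chi'}\dot r\,\ud \chi'$ (after subtracting off the center value, which is controlled by the $\dot r/r_0$ term in the energy near $\chi=0$). By Cauchy--Schwarz,
\begin{equation*}
  |\dot r(\chi)|^2 \leq \Bigl(\int_0^\chi r_0^{-4}\,\ud\chi'\Bigr)\Bigl(\int_0^\chi r_0^4(\partial_{\chi'}\dot r)^2\,\ud\chi'\Bigr) \leq \Bigl(\int_0^\chi r_0^{-4}\,\ud\chi'\Bigr)\,\ddot M[r_0],
\end{equation*}
using Proposition~\ref{ddotMenergy} for the second factor. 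From \eqref{leadasymb} we have $\chi \sim r_0^{1/3}$, equivalently $r_0 \sim \chi^3$, so $r_0^{-4}\,\ud\chi \sim \chi^{-12}\cdot(\text{something})$ — this is where one must be careful. More precisely $\ud\chi \sim r_0^2\,\ud r_0$ (from $\partial_\chi r_0 \sim (4\pi r_0^2)^{-1}$), so $\int_0^\chi r_0^{-4}\,\ud\chi' \sim \int_0^{r_0} s^{-2}\,\ud s$, which \emph{diverges}. This suggests I should instead carry the weight differently: bound $|\dot r(\chi)| \leq \bigl(\int_0^\chi r_0^{-1}\,\ud\chi'\bigr)^{1/2}\bigl(\int_0^\chi r_0\,(\partial_{\chi'}\dot r)^2\,\ud\chi'\bigr)^{1/2}$ is still not matched to the energy; the right move is to use the $\dot r^2/r_0^2$ term in the energy near the center together with a Hardy-type inequality, or simply to interpolate: $|\dot r(\chi)|^2 \lesssim r_0 \int (\dot r/r_0)^2 + r_0 \int r_0^4 (\partial_\chi\dot r)^2 \lesssim r_0\,\ddot M[r_0]$, after checking the weight $\int_0^\chi r_0^{-2}\ud\chi' \sim r_0^{-1}\cdot(\text{bounded})$... so $|\dot r(\chi)|^2 \lesssim r_0(\chi)\,\ddot M[r_0]$.

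Combining, $|(\tfrac{m}{r^{1+\epsilon}})\dot{}| \lesssim r_0^{1-\epsilon}|\dot r(\chi)| \lesssim r_0^{1-\epsilon}\cdot r_0^{1/2}\,\ddot M[r_0]^{1/2}$, which would give $r_0^{3/2-\epsilon}\,\ddot M[r_0]^{1/2}$ — not quite the stated $r_0^{1/2-\epsilon}\ddot M[r_0]$ (note the claimed bound is linear in $\ddot M$, not its square root, and carries a different power of $r_0$). The discrepancy tells me that the intended reading uses a different normalization of $\ddot M$ or an additional factor; most likely the proposition is stated after noting $\ddot M[r_0] \geq 0$ and is meant up to the background-dependent constant with $\ddot M$ appearing linearly because one reinserts the energy bound $|\dot r(\chi)|^2 \lesssim r_0\,\ddot M$ \emph{without} taking square roots on the left, i.e. one is really bounding the square $|(\tfrac{m}{r^{1+\epsilon}})\dot{}|^2 \lesssim r_0^{2-2\epsilon}|\dot r|^2 \lesssim r_0^{1-2\epsilon}\,\ddot M[r_0]$... still off by a power. \textbf{The main obstacle}, therefore, is pinning down the exact weights: one must track precisely how the center asymptotics $\chi\sim r_0^{1/3}$ and $\partial_\chi r_0 \sim (4\pi r_0^2)^{-1}$ feed into the fundamental-theorem-of-calculus estimate for $\dot r$, handle the (mild) singularity at $\chi=0$ via the $\dot r^2/r_0^2$ term in \eqref{2ndvaren} (a Hardy inequality on the interval, with the $r_0$-weight), and then verify that the resulting power of $r_0$ and power of $\ddot M$ match the statement for all $0\le\epsilon\le\tfrac12$ — with the worst case being the smallest weight, presumably $\epsilon = \tfrac12$, i.e. control of $m/r^{3/2}$, which is exactly the borderline quantity relevant to the continuation criterion of \cite{Ch:93}.
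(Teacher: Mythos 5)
Your opening step coincides with the paper's: insert \eqref{dotmformula} into the variation of $m/r^{1+\epsilon}$ to obtain $(\frac{m}{r^{1+\epsilon}})\dot{}\sim r_0^{1-\epsilon}\dot{r}$ (this is exactly \eqref{varm/r}), and you are right that the stated inequality must be read as a bound on the \emph{square} of the left-hand side, which is what \eqref{varmLinfty} actually establishes. The second half of your plan, however, does not close, and the sticking point is a concrete computational error. Since $\partial_\chi r_0\sim(4\pi r_0^2)^{-1}$ we have $\ud\chi\sim r_0^2\,\ud r_0$, so $\int_0^\chi r_0^{-2}\,\ud\chi'\sim r_0$, not $r_0^{-1}$ as you assert; consequently your claimed pointwise bound $|\dot{r}|^2\lesssim r_0\,\ddot{M}$ has the wrong sign in the exponent. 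The correct radial (Strauss-type) estimate obtainable from the energy \eqref{2ndvaren} is
\begin{equation*}
  r_0\,\dot{r}^2(r_0)=\int_0^{r_0}\partial_s\bigl(s\,\dot{r}^2\bigr)\,\ud s
  \leq 2\int_0^{r_0}\dot{r}^2\,\ud s+\int_0^{r_0}s^2\Bigl(\frac{\ud\dot{r}}{\ud s}\Bigr)^2\ud s
  \lesssim \int_0^B\Bigl(\frac{\dot{r}}{r_0}\Bigr)^2\ud\chi+\int_0^B r_0^4\Bigl(\frac{\partial\dot{r}}{\partial\chi}\Bigr)^2\ud\chi\lesssim\ddot{M}[r_0],
\end{equation*}
i.e.\ $|\dot{r}|^2\lesssim r_0^{-1}\ddot{M}$. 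Had you used this, your route would in fact close: $[(\frac{m}{r^{1+\epsilon}})\dot{}\,]^2\lesssim r_0^{2-2\epsilon}\,|\dot{r}|^2\lesssim r_0^{1-2\epsilon}\,\ddot{M}\leq C\,r_0^{\frac12-\epsilon}\ddot{M}$, since $r_0^{1-2\epsilon}=r_0^{\frac12-\epsilon}\cdot r_0^{\frac12-\epsilon}\leq R_0^{\frac12-\epsilon}\,r_0^{\frac12-\epsilon}$ exactly when $\epsilon\leq\frac12$ --- which also reproduces the stated restriction on $\epsilon$. As written, though, you stop at an unresolved mismatch of exponents and speculate about the normalisation, so the argument is incomplete.

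For comparison, the paper's proof avoids a pointwise bound on $\dot{r}$ altogether. It records two weighted $L^2$ bounds, $\int_0^B[r_0^{\epsilon-2}(\frac{m}{r^{1+\epsilon}})\dot{}\,]^2\,\ud\chi\lesssim\ddot{M}$ and $\int_0^B[r_0^{1+\epsilon}\partial_\chi(\frac{m}{r^{1+\epsilon}})\dot{}\,]^2\,\ud\chi\lesssim\ddot{M}$ --- the latter requires differentiating the composite quantity in $\chi$ and invoking $m_0\sim r_0^3$, $\partial_\chi m_0\sim1$, $\partial_\chi r_0\sim r_0^{-2}$, a step entirely absent from your plan --- and then applies the fundamental theorem of calculus to $r_0^{\epsilon-\frac12}[(\frac{m}{r^{1+\epsilon}})\dot{}\,]^2$ together with Cauchy--Schwarz; the condition $\epsilon\leq\frac12$ emerges there from absorbing the weight generated by $\partial_\chi(r_0^{\epsilon-\frac12})$, and the value at $\chi=0$ is handled by passing to a subsequence $\chi_\nu\to0$ along which the weighted square vanishes. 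Your Strauss-type route, once corrected, is arguably more direct since it only uses the two energy terms already present in \eqref{2ndvaren}; the paper's route has the advantage of producing the $H^1$-type control \eqref{L2vardmdchi} on $\partial_\chi$ of the mass aspect as a by-product.
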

\begin{proof}
Employing (\ref{dotmformula}) we compute
\begin{align}\label{varm/r}
  (\frac{m}{r^{1+\epsilon}})\dot{}=-4\pi r_0^{1-\epsilon}(\rho_0-1)\dot{r}-(1+\epsilon)\frac{m_0}{r_0^{2+\epsilon}}\dot{r}\sim r_0^{1-\epsilon}\dot{r}
\end{align}
which by (\ref{2ndvaren}) implies that
\begin{align}\label{L2varm}
\int^{B}_0\big[r^{\epsilon-2}_0(\frac{m}{r^{1+\epsilon}})\dot{}\big]^2d\chi\leq C\ddot{M}[\rho_0,r_0]\,.
\end{align}
Differentiating in $\chi$ and commuting with the dots we also have
\begin{align}\label{dchivarm}
[\frac{\partial}{\partial\chi}(\frac{m}{r^{1+\epsilon}})]\,\dot{}=&-4\pi (1-\epsilon)r_0^{\epsilon}\frac{\partial r_0}{\partial\chi}(\rho_0-1)\dot{r}-4\pi r_0^{1-\epsilon}\frac{\partial\rho_0}{\partial\chi}\dot{r}\\
\notag&-4\pi r_0^{1-\epsilon}(\rho_0-1)\frac{\partial\dot{r}}{\partial\chi}
-(1+\epsilon)\frac{\partial_\chi m_0}{r_0^{2+\epsilon}}\dot{r}\\
\notag&-(1+\epsilon)(2+\epsilon)\frac{m_0}{r_0^{3+\epsilon}}\frac{\partial r_0}{\partial\chi}\dot{r}-(1+\epsilon)\frac{m_0}{r_0^{2+\epsilon}}\frac{\partial\dot{r}}{\partial\chi}
\end{align}
Using the behaviour at the center $m_0\sim r_0^3,\partial_\chi m_0\sim1$ and $\frac{\partial r_0}{\partial\chi}\sim r_0^{-2}$ we conclude that 
\begin{align}\label{L2vardmdchi}
\int^{B}_0\big(r_0^{1+\epsilon}[\frac{\partial}{\partial\chi}(\frac{m}{r^{1+\epsilon}})]\,\dot{}\big)^2d\chi\lesssim \ddot{M}[\rho_0,r_0]
\end{align}
Finally, we combine \eqref{L2varm}, \eqref{L2vardmdchi} and apply the fundamental theorem of calculus to derive the pointwise bound:
\begin{align}\label{varmLinfty}
r_0^{\epsilon-\frac{1}{2}}\big[(\frac{m}{r^{1+\epsilon}})\dot{}\big]^2=&\,r_0^{\epsilon-\frac{1}{2}}\big[(\frac{m}{r^{1+\epsilon}})\dot{}\big]^2\bigg|_{\chi=0}\\
\notag&+\int^{\chi}_0(\epsilon-\frac{1}{2})r_0^{\epsilon-\frac{3}{2}}\frac{\partial r_0}{\partial\chi}\big[(\frac{m}{r^{1+\epsilon}})\dot{}\big]^2+2r_0^{\epsilon-\frac{1}{2}}(\frac{m}{r^{1+\epsilon}})\dot{}\frac{\partial}{\partial\chi}[(\frac{m}{r^{1+\epsilon}})\dot{}]\,d\chi\\
\tag{by \emph{Cauchy-Schwarz}}\leq&\,C\ddot{M}[\rho_0,r_0]
\end{align}
as long as $\epsilon-\frac{3}{2}-2\ge 2(1+\epsilon)-6$ or \underline{$\epsilon\leq \frac{1}{2}$}. Note that for $\epsilon\leq\frac{1}{2}$, it is immediate from (\ref{L2varm}) that there exists a sequence $\chi_\nu\rightarrow0$ such that $r_0^{\epsilon-\frac{1}{2}}\big[(\frac{m}{r^{1+\epsilon}})\dot{}\big]^2\big|_{\chi_\nu}\rightarrow0$, hence, the preceding calculation can be made rigorous by passing to the limit.
\end{proof}

\section{Linear theory}
\label{sec:linear}

The comoving coordinates  \eqref{eq:metric:com} that have already been introduced in Section~\ref{sec:variation} are particularly useful for the study of the dynamics of hard stars. In coordinates $(\phi,\chi)$, where $\phi$ is the time-function introduced in \eqref{eq:V:phi}, and $\chi$ parametrizes the flow lines of the fluid in $\mathcal{Q}$, the Hessian equations take the form (see (3.33) in \cite{Ch:95:I}):
\begin{subequations}\label{eqs:hessian:comoving}
	\begin{align}
\label{Hessphiphi}		e^{-2\psi}\Bigl(\frac{\partial^2 r}{\partial \phi^2}-\frac{\partial \psi}{\partial \phi}\frac{\partial r}{\partial \phi}\Bigr) -e^{-2\omega}\frac{\partial \psi}{\partial \chi}\frac{\partial r}{\partial \chi}=&-\frac{m}{r^2}-4\pi rp\\
\label{Hessphichi}		\frac{\partial^2 r}{\partial \phi\partial \chi}-\frac{\partial \omega}{\partial \phi}\frac{\partial r}{\partial \chi}-\frac{\partial \psi}{\partial \chi}\frac{\partial r}{\partial \phi}=&0\\
\label{Hesschichi}		e^{-2\omega}\Bigl(\frac{\partial^2 r}{\partial \chi^2}-\frac{\partial \omega}{\partial \chi}\frac{\partial r}{\partial \chi}\Bigr)-e^{-2\psi}\frac{\partial \omega}{\partial \phi}\frac{\partial r}{\partial \phi}=&\frac{m}{r^2}-4\pi r\rho
	\end{align}
\end{subequations}
Recall also the conservation law \eqref{eq:wave:com}, and the relations \eqref{psi:rho},\eqref{eq:rho:n}. In comoving coordinates the Hawking mass $m$ is given by \eqref{eq:mass:comoving} and satisfies the differential equations \eqref{eq:m:phi},\eqref{eq:m:chi}.

\begin{rem}
It is not obvious that this system is locally well-posed, with the free boundary condition $p=0$ on $\chi=\chi_B$.
However, the early work of \textsc{Kind-Ehlers} \cite{Kind:93} can be adapted to derive a \emph{symmetric first order hyperbolic system} for the unknowns
\begin{align}
X=-N\psi, &&Y=\frac{\partial_\chi Tr}{\partial_\chi r}+2\frac{Tr}{r},
\end{align}
coupled to ODE equations in time for the rest of the variables, see \cite[\S3.1]{Kind:93}.
Here we are interested only in the linearisation of the system, and thus in this paper we do not pursue the well-posedness of the non-linear system further in this formulation.\footnote{Recall that local existence for the free-boundary problem of the two phase model was shown in \emph{null coordiantes} in \cite{Ch:95:I}, \cite{Ch:96:II}.}
\end{rem}

In what follows we will linearise the Einstein-Euler equations in comoving coordinates, in the form \eqref{eqs:hessian:comoving}. Alternatively, one could linearise the system in double null coordinates, given in the form \eqref{eq:hessian}, which at first sight seems more natural because it will involve the study of the linear wave equation \eqref{eq:wave} for $\phi$ on a fixed background. In comoving coordinates $\phi$ is eliminated as an unknown, however these coordinates have the decisive advantage of allowing us to identify the boundaries of a family of solutions.

\subsection{Linearized equations}
\label{sec:linear:equations}

We linearise the spherically symmetric equations \eqref{eqs:hessian:comoving} around a static hard star, while keeping the total number of particles $N$ fixed. We denote all variables of the background with a $0$-subscript, i.e., $r_0,\phi_0$ etc. In particular, we set:
\begin{align}\label{pertvar}
	\begin{split} 
		\psi=\psi_0+\psi_1,\qquad r=r_0+r_1,\qquad\rho=\,\rho_0+\rho_1\\
		m=m_0+m_1,\qquad \omega=\omega_0+\omega_1,\qquad n=n_0+n_1.
	\end{split} 
\end{align}
The above definitions make sense after identifying the domain of the two sets of variables $\{r,\rho,\ldots\},\{r_0,\rho_0,\ldots\}$ via the diffeomorphism:
\begin{align}\label{diffeo}
	(\phi,\chi)\leftrightarrow(\phi_0,\chi_0),&&\chi=\int^\chi_0 n\ud \mu_{\phi=\mathrm{const}},\;\chi_0=\int_0^{\chi_0} n_0\ud\mu_{\phi_0=\mathrm{const}},
\end{align} 
where $r(0)=r_0(0)=0$. 

Recall that by keeping the total number of particles fixed, $N=N_0$, the spatial variables $\chi,\chi_0$ are \emph{a priori}
defined on the same interval $[0,B]$, where $\chi=\chi_0=B$ corresponds to the identified boundary, i.e., 
\begin{align}\label{rho1bd}
	r(B)=R,\;r_0(B)=R_0,\qquad\rho(B)=\rho_0(B)=1\;\Longrightarrow\; \rho_1(B)=0.
\end{align}
Also, observe that the conservation law \eqref{eq:wave:com} is equivalent to 
\begin{align}\label{conslaw2}
	\partial_\phi(n \ud\mu_{\phi=\mathrm{const.}})=0,
\end{align}
which implies that the formulas for $\chi,\chi_0$ are valid for all $\phi\equiv\phi_0$. We will use this observation to show that the linearised Einstein-Euler equations around a hard star in spherical symmetry reduce to one master equation for the area radius function $r$, from which  all the other linearised variables may be computed. 
\begin{prop}\label{prop:mastereq}
  Given a fixed hard star solution to the Einstein-Euler system and spherically symmetric perturbations of the form \eqref{pertvar}, which preserve the total number of particles $N$,  the linearised equations for $r_1$, $\rho_1$, $\psi_1$, $n_1$, $m_1$, $\omega_1$ reduce to the following form:
  \begin{enumerate}[label=(\alph*)]
    \item The linearised variable $r_1$ satisfies the decoupled, homogeneous wave equation
	\begin{align}\label{linHessreq}
		&(2\rho_0-1)\partial^2_\phi r_1-[\frac{m_0}{r^2_0}+4\pi r_0(\rho_0-1)]\frac{1}{\partial_\chi\psi_0}\partial_\chi(\frac{\partial_{\chi}r_1}{\partial_\chi r_0})\\
		\notag=&\,\bigg[2\frac{\partial_\chi\psi_0}{\partial_\chi r_0}[\frac{m_0}{r^2_0}+4\pi r_0(\rho_0-1)]
		+2\frac{m_0}{r^3_0}+4\pi r_0(2\rho_0-1)(\frac{2}{r_0}-\frac{\partial_\chi\psi_0}{\partial_\chi r_0})\bigg]r_1 \\
		\notag&+[\frac{m_0}{r^2_0}+4\pi r_0(\rho_0-1)]\frac{1}{\partial_\chi \psi_0}\partial_\chi\big[(\frac{2}{r_0}-\frac{\partial_\chi\psi_0}{\partial_\chi r_0}) r_1\big]+ (4\pi r_0\rho_0-\frac{m_0}{r_0^2})\frac{\partial_\chi r_1}{\partial_\chi r_0}
	\end{align}
	with mixed boundary conditions 
	\begin{align}\label{linr1bdcond}
		r_1=0:\;\text{at $\chi=0$},&&\partial_\chi r_1=(\frac{\partial_\chi\psi_0}{\partial_\chi r_0}-\frac{2}{r_0})r_1:\;\text{at $\chi=B$.}
	\end{align}
	\item The linearised variables $\rho_1$, $\psi_1$, $n_1$, $m_1$, $\omega_1$ are explicit functions of $r_1$, and can be computed directly from the formulas:
	\begin{subequations}\label{restlinvar}
	\begin{align}
			\rho_1=&-\psi_1(2\rho_0-1)=n_1\sqrt{2\rho_0-1},\\
			\omega_1=&\,\frac{\partial_\chi r_1}{\partial_\chi r_0}-\frac{\partial_\chi\psi_0}{\partial_\chi r_0}r_1,\\
			\psi_1=&\,\Bigl(\frac{2}{r_0}-\frac{\partial_\chi\psi_0}{\partial_\chi r_0}\Bigr)r_1+\frac{\partial_\chi r_1}{\partial_\chi r_0}\\
			m_1=&-4\pi r^2_0(\rho_0-1)r_1\label{restlinvar:m}
	\end{align}
      \end{subequations}
    \end{enumerate}
\end{prop}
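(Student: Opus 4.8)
The strategy rests on the $\phi$-independence of the static background: I would first linearise \emph{all} the constraint relations --- the equation of state \eqref{eq:rho:n}, \eqref{psi:rho}, \eqref{n:psi}, the particle-number normalisation \eqref{def:n} of the coordinate $\chi$ (which, by the conservation law \eqref{conslaw2}, persists for all $\phi$), the Hawking mass formula \eqref{eq:mass:comoving}, and the mass equations \eqref{eq:m:phi}, \eqref{eq:m:chi} --- so as to express each of $\rho_1, n_1, \psi_1, \omega_1, m_1$ \emph{algebraically} in terms of $r_1$ and $\partial_\chi r_1$. The only genuinely dynamical equation left is then the $\phi\phi$-Hessian \eqref{Hessphiphi}, into which the formulas \eqref{restlinvar} are substituted to produce \eqref{linHessreq}; the boundary conditions \eqref{linr1bdcond} come from regularity at the centre and from the free-boundary condition $p=0$.

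The formulas \eqref{restlinvar} are obtained in the following order. Differentiating $2\rho_0-1=e^{-2\psi_0}$, $n_0=e^{-\psi_0}$ and $\rho_0=\tfrac12(n_0^2+1)$ in the linearisation parameter gives immediately $\rho_1=-(2\rho_0-1)\psi_1=n_1\sqrt{2\rho_0-1}$. For $m_1$: Corollary~\ref{cor:dot:m}, applied with the linearisation parameter playing the role of the variational one (the static star is a critical point of the mass functional and $\chi$ is the particle-number coordinate), gives $m_1=-4\pi r_0^2(\rho_0-1)r_1$ on the initial slice; linearising \eqref{eq:m:phi} and using $p_0=\rho_0-1$ and $\partial_\phi r_0=0$ yields $\partial_\phi\bigl(m_1+4\pi r_0^2(\rho_0-1)r_1\bigr)=0$, so the identity propagates to every $\phi$. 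For $\omega_1$: since $\partial_\phi r_0=0$, the quadratic term $e^{-2\psi}(\partial_\phi r)^2$ does not contribute to the linearisation of \eqref{eq:mass:comoving}, which --- using $e^{-2\omega_0}(\partial_\chi r_0)^2=1-2m_0/r_0$ --- solves for $\omega_1$ in terms of $m_1$, $r_1$, $\partial_\chi r_1$; inserting the formula for $m_1$ and the background identity $\partial_\chi\psi_0/\partial_\chi r_0=\bigl(m_0/r_0^2+4\pi r_0(\rho_0-1)\bigr)/(1-2m_0/r_0)$, a direct consequence of $2\rho_0-1=e^{-2\psi_0}$ and the TOV equation \eqref{eq:rho:ode}, gives $\omega_1=\partial_\chi r_1/\partial_\chi r_0-(\partial_\chi\psi_0/\partial_\chi r_0)r_1$. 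Finally, linearising \eqref{def:n} together with $n_0=e^{-\psi_0}$ gives $\psi_1=2r_1/r_0+\omega_1$, and combining with the expression for $\omega_1$ yields the stated formula for $\psi_1$; the linearised $\phi\chi$-Hessian \eqref{Hessphichi} then reduces to an identity and serves as a consistency check.

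For part (a) I would substitute \eqref{restlinvar} into the linearisation of \eqref{Hessphiphi}. Because every $\phi$-derivative of a background quantity vanishes, the term $\partial_\phi\psi\,\partial_\phi r$ drops out, $e^{-2\psi_0}=2\rho_0-1$ multiplies $\partial_\phi^2 r_1$, and the remaining terms are linearised directly; subtracting the static $\phi\phi$-Hessian identity and then reorganising with the help of the background equations (the TOV equation, the static Hessian identities, and $2\rho_0-1=e^{-2\psi_0}$) puts the result in the form \eqref{linHessreq}. The condition at $\chi=0$ is regularity at the centre, $r=0$, i.e.\ $r_1=0$; at $\chi=B$ the free-boundary condition $p=0$ reads $\rho_1|_{\chi=B}=0$, equivalently $\psi_1|_{\chi=B}=0$, which upon inserting the formula for $\psi_1$ becomes the Robin-type condition in \eqref{linr1bdcond}. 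The linearised $\chi\chi$-Hessian \eqref{Hesschichi} and mass equation \eqref{eq:m:chi} are then automatically satisfied (propagation of constraints).

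The main obstacle is the algebra in part (a): one must check that after the substitutions the equation \eqref{Hessphiphi} genuinely \emph{closes}, i.e.\ that all residual dependence on $\rho_1$, $\omega_1$ and their $\chi$-derivatives cancels, leaving a second-order-in-$\chi$ operator acting on $r_1$ alone --- a cancellation forced by the background static equations, and where the linearity $2\rho-1=e^{-2\psi}$ of the hard-phase equation of state is essential. A secondary, more conceptual point is the propagation in $\phi$ of the first-variation identity $m_1=-4\pi r_0^2(\rho_0-1)r_1$ off the initial slice; it works precisely because particle number is conserved (so $\chi$ keeps its meaning on every slice) and the background is $\phi$-independent, which together reduce the claim, via the linearised \eqref{eq:m:phi}, to Corollary~\ref{cor:dot:m} at $\phi=0$.
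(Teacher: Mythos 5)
Your proposal is correct and follows essentially the same route as the paper: linearise the algebraic relations and the conservation law $r^2e^{-\psi}e^\omega=1/4\pi$ to express $\rho_1,n_1,\psi_1,\omega_1$ through $r_1$, import $m_1=-4\pi r_0^2(\rho_0-1)r_1$ from Corollary~\ref{cor:dot:m}, substitute into the linearised $\phi\phi$-Hessian, and read off the boundary conditions from regularity at the centre and $\rho_1(B)=0$. The only (welcome) addition is your explicit propagation of the $m_1$ identity off the initial slice via the linearised \eqref{eq:m:phi}, which the paper leaves implicit by invoking the variational formula pointwise.
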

\begin{proof}
	Linearising \eqref{psi:rho} and \eqref{eq:rho:n} we obtain:
	\begin{align}\label{linrhopsi}
		\rho_1=-\psi_1(2\rho_0-1)=n_1\sqrt{2\rho_0-1}
	\end{align}
	Note that by \eqref{conslaw2} and our choice of spatial coordinate $\chi$ it follows that 
	\begin{align}\label{conslaw3}
		r^2e^{-\psi}e^\omega=\frac{1}{4\pi}
	\end{align}
	Therefore, the linearised equation reads:
	\begin{align}\label{linconslaw}
		\frac{2}{r_0} r_1- \psi_1+ \omega_1=0
	\end{align}
	Likewise, linearising the first equation in \eqref{Hessphiphi} we obtain
	\begin{multline}\label{linHessreq2}
		(2\rho_0-1)\partial^2_\phi r_1+(2\omega_1-\frac{\partial_\chi\psi_1}{\partial_\chi\psi_0}-\frac{\partial_\chi r_1}{\partial_\chi r_0})[\frac{m_0}{r_0^2}+4\pi r_0(\rho_0-1)]=\\
		=-\frac{m_1}{r_0^2}+2\frac{m_0}{r^3_0}r_1-4\pi r_1(\rho_0-1)-4\pi r_0\rho_1 
	\end{multline}
	On the other hand, linearising the mass equation \eqref{eq:mass:comoving} gives
	\begin{align}\label{linm}
		-\frac{m_1}{r_0}+\frac{m_0}{r^2_0}r_1=-(1-\frac{2m_0}{r_0})\omega_1+(1-\frac{2m_0}{r_0})\frac{\partial_\chi r_1}{\partial_\chi r_0},
	\end{align}
	which we may use to replace $\omega_1$ from \eqref{linconslaw},\eqref{linHessreq2} above in favour of $m_1$. 
	
	Recall that the variational properties of the static hard stars imply that the following formula is valid pointwise:
	\begin{align}\label{m1}
		m_1=-4\pi r_0^2(\rho_0-1)r_1\,,
	\end{align}
	which gives us in turn an expression for $\omega_1$ in terms of $r_1$ by \eqref{linm}: 
	\begin{align}\label{omega1}
		\omega_1=&\,\frac{\partial_\chi r_1}{\partial_\chi r_0}-\frac{\frac{m_0}{r^2_0}+4\pi r_0(\rho_0-1)}{1-\frac{2m_0}{r_0}}r_1
		=\frac{\partial_\chi r_1}{\partial_\chi r_0}-\frac{\partial_\chi\psi_0}{\partial_\chi r_0}r_1,
	\end{align}
	where in the last equality we used  \eqref{eq:rho:ode} expressed in terms of $\psi_0$, $\psi_0=-\frac{1}{2}\log(2\rho_0-1)$:
	\begin{subequations}
	\begin{align}\label{TOV}
			\partial_\chi\rho_0=&-\frac{2\rho_0-1}{1-\frac{2m_0}{r_0}}\Bigl[\frac{m_0}{r_0^2}+4\pi r_0(\rho_0-1)\Bigr]\partial_\chi r_0\\ \iff \partial_\chi\psi_0=&\,\frac{1}{1-\frac{2m_0}{r_0}}\Bigl[\frac{m_0}{r_0^2}+4\pi r_0(\rho_0-1)\Bigr]\partial_\chi r_0
	\end{align}
      \end{subequations}
      Note that $\partial_\chi\rho_0<0$, $\partial_\chi\psi_0>0$, as long as $r_0>2m_0$, $\rho_0\ge1$.
	
	Thus, substituting \eqref{m1},\eqref{omega1} into \eqref{linconslaw},\eqref{linHessreq2} we obtain:

	\begin{align}\label{psi1}
		\psi_1=\Bigl(\frac{2}{r_0}-\frac{\partial_\chi\psi_0}{\partial_\chi r_0}\Bigr)) r_1+ \frac{\partial_\chi r_1}{\partial_\chi r_0}
	\end{align}
	and
	\begin{align}\label{linHessreq3}
		&(2\rho_0-1)\partial^2_\phi r_1+(\frac{\partial_\chi r_1}{\partial_\chi r_0}-2\frac{\partial_\chi\psi_0}{\partial_\chi r_0}r_1-\frac{\partial_\chi\psi_1}{\partial_\chi \psi_0})[\frac{m_0}{r_0^2}+4\pi r_0(\rho_0-1)]\\
		\notag=&-\frac{m_1}{r_0^2}+2\frac{m_0}{r^3_0}r_1-4\pi r_1(\rho_0-1)-4\pi r_0\rho_1 \\
		\tag{by \eqref{linrhopsi},\eqref{m1}}=&\,[4\pi(\rho_0-1)+2\frac{m_0}{r^3_0}-4\pi (\rho_0-1)]r_1+4\pi r_0(2\rho_0-1)\psi_1\\
		\tag{by \eqref{psi1}}=&\,\big[2\frac{m_0}{r^3_0}+4\pi r_0(2\rho_0-1)(\frac{2}{r_0}-\frac{\partial_\chi\psi_0}{\partial_\chi r_0})\big] r_1+ 4\pi r_0(2\rho_0-1)\frac{\partial_\chi r_1}{\partial_\chi r_0}
	\end{align}
	Using \eqref{psi1} to also eliminate $\psi_1$ from the left hand side of \eqref{linHessreq3} we arrive at the homogeneous wave equation \eqref{linHessreq} for $r_1$.
\end{proof}
\subsection{Linear stability of hard stars}
\label{sec:linear:stability}

In this subsection we exploit the hierarchical form of the linearised equations exhibited by Proposition \ref{prop:mastereq} to derive energy estimates for the linearised variables. More precisely, 
we observe that in the small radius regime, the wave equation \eqref{linHessreq} for $r_1$ possesses a favourable leading order potential term, which along with a careful estimate of the coefficient of the first order term in the RHS of \eqref{linHessreq} enables us to obtain uniform global-in-time energy estimates for $r_1$. Thus, appealing also to \eqref{restlinvar}, we show that the linearised system has a uniformly bounded $H^1$-type energy.
\begin{prop}\label{prop:linenest}
	Let $r_1$ be a solution to \eqref{linHessreq} satisfying the boundary conditions \eqref{bdcondr1}. Then the following energy estimates hold:
	\begin{align}\label{linenest}
		\mathcal{E}(\phi):=\int^B_0\big[\frac{r_1^2}{r_0^2}+(\partial_\phi r_1)^2+r_0^4(\partial_\chi r_1)^2\big]d\chi \leq C\mathcal{E}(0)
	\end{align}
	and
	\begin{align}\label{linenest2}
		\int^B_0\rho_1^2+n_1^2+\psi_1^2+\omega_1^2d\chi\leq C\mathcal{E}(0),
	\end{align}
	for all $\phi\ge0$, where $C>0$ is a constant only depending on the background hard star solution.
\end{prop}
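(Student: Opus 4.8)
The plan is to establish \eqref{linenest} by exhibiting a conserved Hamiltonian energy for the decoupled master equation \eqref{linHessreq} of Proposition~\ref{prop:mastereq}, and then to read off \eqref{linenest2} from the explicit formulas \eqref{restlinvar}. I would first pass to the area-radius variable via $\partial_{r_0}=(\partial_\chi r_0)^{-1}\partial_\chi$ (so that $\tfrac{\partial_\chi r_1}{\partial_\chi r_0}=\partial_{r_0}r_1$), divide \eqref{linHessreq} by $2\rho_0-1\sim1$, and simplify $\bigl[\tfrac{m_0}{r_0^2}+4\pi r_0(\rho_0-1)\bigr]/\partial_\chi\psi_0=(1-\tfrac{2m_0}{r_0})/\partial_\chi r_0$ using the TOV identity \eqref{TOV}; invoking $m_1=-4\pi r_0^2(\rho_0-1)r_1$ of Corollary~\ref{cor:dot:m} for the terms entering through $m_1$, \eqref{linHessreq} becomes a scalar wave equation $-\partial_\phi^2 r_1=a_2\,\partial_{r_0}^2 r_1+a_1\,\partial_{r_0}r_1+a_0\,r_1$ with $a_2\sim-1$. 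A standard integrating factor puts it in Sturm--Liouville form
\begin{equation*}
-\partial_\phi^2 r_1=\tfrac1\mu\bigl(-\partial_{r_0}(p\,\partial_{r_0}r_1)+q\,r_1\bigr),\qquad \mu,p=r_0^2\bigl(1+\mathcal O(R_0^2)\bigr),\quad q=2+\mathcal O(R_0^2),
\end{equation*}
where the asymptotics of $\mu,p,q$ follow from Lemma~\ref{lemma:stars:asymptotics} (using $\partial_\chi r_0\sim r_0^{-2}$, $m_0=\tfrac{4\pi}{3}r_0^3(1+\mathcal O(R_0^2))$, $\rho_0-1=\mathcal O(R_0^2-r_0^2)$); this is a small perturbation of the operator $H_0=-\partial_{r_0}^2-\tfrac2{r_0}\partial_{r_0}+\tfrac2{r_0^2}$, whose positive potential $2/r_0^2$ survives. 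In particular $\mathcal B(r_1):=\int_0^{R_0}\bigl(p\,(\partial_{r_0}r_1)^2+q\,r_1^2\bigr)\,dr_0$ is positive for $R_0$ small, and since $d\chi\sim r_0^2\,dr_0$ it is equivalent to the spatial part $\int_0^B\bigl(\tfrac{r_1^2}{r_0^2}+r_0^4(\partial_\chi r_1)^2\bigr)\,d\chi$ of $\mathcal E$.

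Multiplying the Sturm--Liouville equation by $\mu\,\partial_\phi r_1$ and integrating in $r_0$, one integration by parts gives, with no residual first-order flux,
\begin{equation*}
\tfrac12\tfrac{d}{d\phi}\Bigl(\int_0^{R_0}\mu\,(\partial_\phi r_1)^2\,dr_0+\mathcal B(r_1)\Bigr)=\bigl[\,p\,\partial_{r_0}r_1\,\partial_\phi r_1\,\bigr]_{r_0=0}^{r_0=R_0}.
\end{equation*}
The inner endpoint contributes nothing, since $p(0)=0$ and $r_1\rvert_{\chi=0}=0$ (made rigorous, as in the proof of Proposition~\ref{moverrprop}, by passing to the limit along a sequence $\chi_\nu\to0$ on which the boundary quantity vanishes). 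At $r_0=R_0$ the boundary condition \eqref{linr1bdcond} reads $\partial_{r_0}r_1\rvert_{R_0}=\bigl(\tfrac{\partial_\chi\psi_0}{\partial_\chi r_0}-\tfrac2{r_0}\bigr)r_1\rvert_{R_0}=:-\kappa\,r_1\rvert_{R_0}$ with $\kappa>0$ for small stars, so the boundary term equals $-\tfrac12\,p\rvert_{R_0}\kappa\,\partial_\phi\bigl(r_1\rvert_{R_0}\bigr)^2$, and therefore
\begin{equation*}
\widehat{\mathcal E}(\phi):=\int_0^{R_0}\mu\,(\partial_\phi r_1)^2\,dr_0+\mathcal B(r_1)+p\rvert_{R_0}\,\kappa\,\bigl(r_1\rvert_{R_0}\bigr)^2
\end{equation*}
is conserved. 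Since $\widehat{\mathcal E}\ge\int_0^{R_0}\mu(\partial_\phi r_1)^2\,dr_0+\mathcal B(r_1)\gtrsim\mathcal E(\phi)$, and since at the initial time $\widehat{\mathcal E}(0)\lesssim\mathcal E(0)$ --- the extra boundary term $\bigl(r_1\rvert_{R_0}\bigr)^2$ being controlled because $\mathcal E$ is non-degenerate near $r_0=R_0$, where $\mathcal E\gtrsim\lVert r_1\rVert_{H^1([R_0/2,R_0])}^2\gtrsim\bigl(r_1\rvert_{R_0}\bigr)^2$ --- we conclude $\mathcal E(\phi)\lesssim\widehat{\mathcal E}(\phi)=\widehat{\mathcal E}(0)\lesssim\mathcal E(0)$, i.e.\ \eqref{linenest}.

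Finally \eqref{linenest2} follows directly from \eqref{restlinvar}: since $2\rho_0-1\sim1$, $\tfrac2{r_0}-\tfrac{\partial_\chi\psi_0}{\partial_\chi r_0}\sim\tfrac1{r_0}$ and $(\partial_\chi r_0)^{-1}\sim r_0^2$ by Lemma~\ref{lemma:stars:asymptotics}, each of $\lvert\rho_1\rvert,\lvert n_1\rvert,\lvert\psi_1\rvert,\lvert\omega_1\rvert$ is bounded pointwise by $C\bigl(\tfrac{\lvert r_1\rvert}{r_0}+r_0^2\,\lvert\partial_\chi r_1\rvert\bigr)$, so $\int_0^B(\rho_1^2+n_1^2+\psi_1^2+\omega_1^2)\,d\chi\le C\,\mathcal E(\phi)\le C\,\mathcal E(0)$.

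The main point --- where the small-radius hypothesis is essential --- is the coercivity of $\mathcal B$, i.e.\ the positivity of $q$. This is not automatic: it hinges on the fact that after the reduction the leading operator is genuinely $H_0$, whose repulsive-looking but in fact favourable potential $+2/r_0^2$ dominates the $\mathcal O(R_0^2)$-small corrections to $q$ once $R_0$ is small; for stars with large central density one expects precisely this feature to fail. The identity above also makes transparent the link, announced in the introduction, with the second variation: $\widehat{\mathcal E}$ coincides to leading order with $\ddot M[r_0]$ of \eqref{2ndvaren} (cf.\ Proposition~\ref{ddotMenergy}), so the linear energy sits exactly at the level of the variational norm, and the existence of the time-periodic solutions of the following proposition shows that boundedness --- rather than decay --- is the best possible statement.
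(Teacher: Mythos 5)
Your proof is correct and follows essentially the same route as the paper: reduction of the master equation to divergence (Sturm--Liouville) form whose leading potential $+2/r_0^2$ is inherited from $H_0$, multiplication by $\partial_\phi r_1$, a sign check on the boundary flux at $\chi=B$ using the mixed boundary condition, and the explicit formulas \eqref{restlinvar} for the remaining variables --- the paper merely performs the reduction in the $\chi$ variable via an integrating factor $e^{\int f}$ with $f\sim\chi^{-1/3}$ rather than passing to $r_0$. One small slip: \eqref{linr1bdcond} constrains $\partial_\chi r_1$, so $\partial_{r_0}r_1\rvert_{R_0}$ carries an extra factor $(\partial_\chi r_0)^{-1}\sim r_0^2>0$, which changes the size of your $\kappa$ (to $\sim R_0$, recovering the paper's boundary term $\sim R_0^3\,r_1^2\rvert_{\chi=B}$) but not its sign, so the argument is unaffected.
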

\begin{rem}
	The energy estimates \eqref{linenest}, \eqref{linenest2} are exactly at the level of the energy controlled by the second variation of $M$, cf.~Proposition~\ref{ddotMenergy}. In particular, we may use \eqref{linenest} and \eqref{restlinvar:m} to derive the same pointwise and $H^1$ bounds for $m_1$ as derived for $\dot{m}$ in Proposition \ref{moverrprop}.
\end{rem}
\begin{proof} 
	We rewrite \eqref{linHessreq3} by expressing the second term in the LHS as a pure $\partial_\chi$ derivative and using \eqref{TOV}:
	\begin{align}\label{linHessreq4}
		&(2\rho_0-1)\partial^2_\phi r_1-\partial_\chi\bigg[\frac{1-\frac{2m_0}{r_0}}{\partial_\chi r_0}\frac{\partial_{\chi}r_1}{\partial_\chi r_0}\bigg]\\
		\notag=&\,\bigg[2\frac{\partial_\chi\psi_0}{\partial_\chi r_0}[\frac{m_0}{r^2_0}+4\pi r_0(\rho_0-1)]
		+2\frac{m_0}{r^3_0}+4\pi r_0(2\rho_0-1)(\frac{2}{r_0}-\frac{\partial_\chi\psi_0}{\partial_\chi r_0})\bigg]r_1 \\
		\notag&+\frac{1-\frac{2m_0}{r_0}}{\partial_\chi r_0}\partial_\chi\big[(\frac{2}{r_0}-\frac{\partial_\chi\psi_0}{\partial_\chi r_0}) r_1\big]+ (4\pi r_0\rho_0-\frac{m_0}{r_0^2})\frac{\partial_\chi r_1}{\partial_\chi r_0}
		-\frac{\partial_{\chi}r_1}{\partial_\chi r_0}\partial_\chi\bigg(\frac{1-\frac{2m_0}{r_0}}{\partial_\chi r_0}\bigg)\\
		\notag=&\,\bigg[2\frac{\partial_\chi\psi_0}{\partial_\chi r_0}[\frac{m_0}{r^2_0}+4\pi r_0(\rho_0-1)]
		+2\frac{m_0}{r^3_0}+4\pi r_0(2\rho_0-1)(\frac{2}{r_0}-\frac{\partial_\chi\psi_0}{\partial_\chi r_0})\\
		\notag&+\frac{1-\frac{2m_0}{r_0}}{\partial_\chi r_0}\partial_\chi(\frac{2}{r_0}-\frac{\partial_\chi\psi_0}{\partial_\chi r_0})\bigg]r_1\\ 
		\notag&+\bigg[(1-\frac{2m_0}{r_0})(\frac{2}{r_0}-\frac{\partial_\chi\psi_0}{\partial_\chi r_0})+4\pi r_0\rho_0-\frac{m_0}{r_0^2}
		-\partial_\chi\bigg(\frac{1-\frac{2m_0}{r_0}}{\partial_\chi r_0}\bigg)\bigg]\frac{\partial_{\chi}r_1}{\partial_\chi r_0}
	\end{align}
	Next, we compute the coefficient of the last term by using \eqref{Hesschichi},\eqref{eq:m:chi}:
	\begin{align}\label{statid2}
		&-\partial_\chi\bigg(\frac{1-\frac{2m_0}{r_0}}{\partial_\chi r_0}\bigg)=\frac{1-\frac{2m_0}{r_0}}{(\partial_\chi r_0)^2}\partial_\chi^2r_0+\frac{\frac{2\partial_\chi m_0}{r_0}-\frac{2m_0}{r_0^2}\partial_\chi r_0}{\partial_\chi r_0}\\
		\notag=&\,\frac{1-\frac{2m_0}{r_0}}{(\partial_\chi r_0)^2}\Bigl(\partial_\chi\omega_0\partial_\chi r_0+e^{2\omega_0}\frac{m_0}{r_0^2}-e^{2\omega_0}4\pi r_0\rho_0\Bigr)+8\pi r_0\rho_0-\frac{2m_0}{r_0^2}\\
		\notag=&\,\frac{1-\frac{2m_0}{r_0}}{(\partial_\chi r_0)^2}\partial_\chi\omega_0\partial_\chi r_0+\frac{m_0}{r_0^2}-4\pi r_0\rho_0+8\pi r_0\rho_0-\frac{2m_0}{r_0^2}\\
		\tag{by \eqref{conslaw3}} =&\,\frac{1-\frac{2m_0}{r_0}}{(\partial_\chi r_0)^2}\Bigl(\partial_\chi\psi_0-\frac{2}{r_0}\partial_\chi r_0\Bigr)\partial_\chi r_0+4\pi r_0\rho_0-\frac{m_0}{r_0^2}\\
		\notag=&\,\bigl(1-\frac{2m_0}{r_0}\bigr)\bigl(\frac{\partial_\chi\psi_0}{\partial_\chi r_0}-\frac{2}{r_0}\bigr)+4\pi r_0\rho_0-\frac{m_0}{r_0^2}
	\end{align}
	Hence, the wave equation \eqref{linHessreq4} becomes
	\begin{align}\label{linHessreq5}
		&(2\rho_0-1)\partial^2_\phi r_1-\partial_\chi\bigg[\frac{1-\frac{2m_0}{r_0}}{\partial_\chi r_0}\frac{\partial_{\chi}r_1}{\partial_\chi r_0}\bigg]\\
		\notag=&\,\bigg[2\frac{\partial_\chi\psi_0}{\partial_\chi r_0}[\frac{m_0}{r^2_0}+4\pi r_0(\rho_0-1)]
		+2\frac{m_0}{r^3_0}+4\pi r_0(2\rho_0-1)\Bigl(\frac{2}{r_0}-\frac{\partial_\chi\psi_0}{\partial_\chi r_0}\Bigr)\\
		\notag&+\frac{1-\frac{2m_0}{r_0}}{\partial_\chi r_0}\partial_\chi\Bigl(\frac{2}{r_0}-\frac{\partial_\chi\psi_0}{\partial_\chi r_0}\Bigr)\bigg]r_1 
		+\bigl(8\pi r_0\rho_0-\frac{2m_0}{r_0^2}\bigr)\frac{\partial_{\chi}r_1}{\partial_\chi r_0}
	\end{align}

	We wish to eliminate the last term in the RHS by making a transformation $r_1\to f r_1$. The correct function $f$ is found via integrating factors. It is important however that $f$ is integrable in $[0,B]$, otherwise this procedure might introduce undesirable weights in the energy estimate below. 
        We have
	\begin{align}\label{linHessreq6}
		&e^{\int^\chi_0fd\chi'}(2\rho_0-1)\partial^2_\phi r_1-\partial_\chi\bigg[e^{\int^\chi_0fd\chi'}\frac{1-\frac{2m_0}{r_0}}{\partial_\chi r_0}\frac{\partial_{\chi}r_1}{\partial_\chi r_0}\bigg]\\
		\notag=&\,e^{\int^\chi_0fd\chi'}\bigg[2\frac{\partial_\chi\psi_0}{\partial_\chi r_0}\Bigl(\frac{m_0}{r^2_0}+4\pi r_0(\rho_0-1)\Bigr)
		+2\frac{m_0}{r^3_0}+4\pi r_0(2\rho_0-1)(\frac{2}{r_0}-\frac{\partial_\chi\psi_0}{\partial_\chi r_0})\\
		\notag&+\frac{1-\frac{2m_0}{r_0}}{\partial_\chi r_0}\partial_\chi(\frac{2}{r_0}-\frac{\partial_\chi\psi_0}{\partial_\chi r_0})\bigg]r_1,
	\end{align}
	for $f=(8\pi r_0\rho_0-\frac{2m_0}{r_0^2})(1-\frac{2m_0}{r_0})^{-1}\partial_\chi r_0$.
	
	From Lemma~\ref{lemma:stars:asymptotics} for the leading order behaviour of small stars we see that $f\sim r_0^{-1}\sim \chi^{-\frac{1}{3}}$, which makes $e^{\int^\chi_0fd\chi'}\sim1$. Moreover, we notice that the dominant coefficient of $r_1$ on the r.h.s~of \eqref{linHessreq6} is
	\begin{align}\label{domcoeffr1}
		\frac{1-\frac{2m_0}{r_0}}{\partial_\chi r_0}\partial_\chi(\frac{2}{r_0})\sim -\frac{2}{r_0^2}
\end{align}
Indeed, this follows by noticing that 
\begin{align}\label{lowcoeffr1}
\frac{\partial_\chi\psi_0}{\partial_\chi r_0}\overset{\eqref{TOV}}{=}\frac{1}{1-\frac{2m_0}{r_0}}\Bigl[\frac{m_0}{r_0^2}+4\pi r_0(\rho_0-1)\Bigr]\overset{\eqref{leadasym}}{\sim} r_0,\qquad |\partial_\chi(\frac{\partial_\chi\psi_0}{\partial_\chi r_0})|\lesssim r_0^{-2}
\end{align}
Thus, multiplying \eqref{linHessreq6} with $\partial_\phi r_1$, integrating in $[0,B]$ and integrating by parts we obtain the energy identity:
	\begin{align}\label{enineq}
		\notag&\frac{1}{2}\partial_\phi\int^B_0\bigg[e^{\int^{\chi'}_0f}(2\rho_0-1)(\partial_\phi r_1)^2+e^{\int^{\chi'}_0f}\frac{1-\frac{2m_0}{r_0}}{(\partial_\chi r_0)^2}(\partial_{\chi}r_1)^2\bigg]d\chi\\
		&-e^{\int^B_0f}\frac{1-\frac{2m_0}{r_0}}{(\partial_\chi r_0)^2}\partial_\chi r_1\partial_\phi r_1\bigg|_{\chi=B}
		=\int^B_0[-\frac{2}{r_0^2}+O(1)]r_1\partial_\phi r_1 d\chi
	\end{align}
	The relation \eqref{psi1} evaluated at $\chi=B$ gives
	\begin{align}\label{bdcondr1}
		\partial_\chi r_1=(\frac{\partial_\chi\psi_0}{\partial_\chi r_0}-\frac{2}{r_0})r_1,&&\text{at $\chi=B$}.
	\end{align}
	Moreover we have $\frac{\partial_\chi\psi_0}{\partial_\chi r_0}-\frac{2}{r_0}\sim -\frac{2}{r_0}$. Therefore, the boundary term in \eqref{enineq} satisfies
	\begin{align}\label{bdterm}
		-e^{\int^B_0f}\frac{1-\frac{2m_0}{r_0}}{(\partial_\chi r_0)^2}\partial_\chi r_1\partial_\phi r_1\bigg|_{\chi=B}\sim R_0^3 \,\partial_\phi(r_1^2),
	\end{align}
	provided $R_0$ is sufficiently small. Hence, all terms in \eqref{enineq} have a favourable sign, yielding the energy estimate:
	\begin{align}\label{enineq2}
		&\int^B_0\Bigl[\frac{r_1^2}{r_0^2}+(\partial_\phi r_1)^2+r_0^4(\partial_{\chi}r_1)^2\Bigr]\ud\chi + R_0^3r_1^2\big|_{\chi=B}\\
		\notag\leq &\,C\bigg[\int^B_0\frac{r_1^2}{r_0^2}+(\partial_\phi r_1)^2+r_0^4(\partial_{\chi}r_1)^2d\chi + R_0^3r_1^2\big|_{\chi=B}\bigg]\bigg|_{\phi=0}
	\end{align}
	By the fundamental theorem of calculus we find that the boundary term $R_0^3r_1^2\big|_{\chi=B}$ is in fact controlled by the energy $\mathcal{E}(\phi)$, yielding \eqref{linenest}. More precisely, it holds $r_0^{-1}r_1^2\leq\mathcal{E}(\phi)$ for all $\chi\in[0,B]$. Finally, the energy estimate \eqref{linenest2} for the rest of the linearised variables follows from \eqref{restlinvar} and  \eqref{linenest}.
\end{proof}
We may use \eqref{linenest}, along with the fact that the coefficients in \eqref{linHessreq6} are independent of $\phi$, to obtain higher order estimates  for the linearised variables to any order. 
\begin{prop}\label{prop:highenest}
The following higher order energies of the solution $(r_1$, $\rho_1$, $\psi_1$, $\omega_1$, $m_1)$ to the linearised equations \eqref{linHessreq}-\eqref{restlinvar} are bounded:
\begin{align}\label{highenest}
\mathcal{E}^{(i)}(\phi):=\int^B_0\sum_{j_1+j_2\leq i}r_0^{6j_2-2}(\partial_\phi^{j_1}\partial_\chi^{j_2} r_1) d\chi\leq C_i\mathcal{E}(0)
\end{align}
and
\begin{align}\label{highenest2}
\int_0^B\sum_{j_1+j_2\leq i}r_0^{6j_2}\big[(\partial_\phi^{j_1}\partial_\chi^{j_2}\psi_1)^2+(\partial_\phi^{j_1}\partial_\chi^{j_2}\omega_1)^2\big]d\chi\leq C_i\mathcal{E}(0)
\end{align}
for all $\phi\ge0,i\ge1$, where $C_i>0$ is increasing in $i$.
\end{prop}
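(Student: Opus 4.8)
The plan is to prove \eqref{highenest} by induction on $i$ and then deduce \eqref{highenest2} from \eqref{highenest} via the explicit formulas \eqref{restlinvar}. Two structural features drive the argument. First, the coefficients of the master equation in the divergence form \eqref{linHessreq6} (equivalently \eqref{linHessreq5}), and the coefficient of the Robin-type boundary relation \eqref{bdcondr1}, are \emph{independent of $\phi$}; hence for any $k\ge0$ the quantity $\partial_\phi^k r_1$ solves \emph{exactly the same} initial--boundary value problem as $r_1$, so Proposition~\ref{prop:linenest} applies verbatim to it and gives
\begin{align*}
  \int_0^B\Bigl[\frac{(\partial_\phi^k r_1)^2}{r_0^2}+(\partial_\phi^{k+1}r_1)^2+r_0^4(\partial_\chi\partial_\phi^k r_1)^2\Bigr]\ud\chi\ \le\ C_k\,\mathcal{E}^{(k+1)}(0).
\end{align*}
Applying this for $k=0,\dots,i-1$ already controls all the slices of \eqref{highenest} with $j_2\le1$, and in particular establishes the base case $i=1$.

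Second, to gain the \emph{pure spatial} derivatives one trades $\partial_\chi$ for $\partial_\phi$ using the elliptic structure of the equation. Writing $a_0:=(1-\tfrac{2m_0}{r_0})/\partial_\chi r_0$, which by Lemma~\ref{lemma:stars:asymptotics} satisfies $a_0\sim r_0^4$, the master equation \eqref{linHessreq5} reads schematically
\begin{align*}
  \partial_\chi\!\Bigl(a_0\,\frac{\partial_\chi r_1}{\partial_\chi r_0}\Bigr)\ =\ (2\rho_0-1)\,\partial_\phi^2 r_1\ -\ V_0\, r_1,\qquad V_0\sim\frac{2}{r_0^2},
\end{align*}
the potential being the favourable one of \eqref{domcoeffr1}. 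Given that $\partial_\phi^2 r_1$ and $r_1$ are under control in the weighted norms of the previous step, one recovers $\partial_\chi r_1$ and then $\partial_\chi^2 r_1$ by integrating this degenerate elliptic identity twice outward from the centre: the inner integration uses $r_1(0)=0$ together with the ``vanishing along a sequence $\chi_\nu\to0$'' argument already employed in the proof of Proposition~\ref{moverrprop}, while the outer constant of integration is pinned down by the boundary relation \eqref{bdcondr1} and the pointwise bound $r_0^{-1}r_1^2|_{\chi=B}\lesssim\mathcal{E}(\phi)$ from Proposition~\ref{prop:linenest}. Since $a_0\sim r_0^4$ and $1/\partial_\chi r_0\sim r_0^2$, each such inversion turns the weight $r_0^{6j_2-2}$ into $r_0^{6(j_2+1)-2}$, exactly the weight occurring in \eqref{highenest}. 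Iterating, and commuting in addition with $\partial_\phi^{j_1}$ (which is free, as above), closes the induction: every commutator term generated carries one fewer derivative together with a compensating power of $r_0$ dictated by Lemma~\ref{lemma:stars:asymptotics} (the stated bounds on derivatives of the analytic remainders are used here), and is absorbed by the induction hypothesis; one also checks that the favourable sign of $V_0$ and the boundedness $e^{\int_0^\chi f\,\ud\chi'}\sim1$ of the integrating factor survive commutation, which is immediate since the extra terms are strictly lower order.

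Finally \eqref{highenest2} follows from \eqref{highenest}. By \eqref{restlinvar}, $\psi_1$ and $\omega_1$ are linear combinations of $r_1$ and $\partial_\chi r_1/\partial_\chi r_0$ with coefficients $\tfrac{2}{r_0}-\tfrac{\partial_\chi\psi_0}{\partial_\chi r_0}\sim r_0^{-1}$ and $1/\partial_\chi r_0\sim r_0^2$ (Lemma~\ref{lemma:stars:asymptotics}); applying $\partial_\phi^{j_1}\partial_\chi^{j_2}$ and expanding by Leibniz, $r_0^{6j_2}(\partial_\phi^{j_1}\partial_\chi^{j_2}\psi_1)^2$ is dominated by a sum of the weighted derivatives of $r_1$ in \eqref{highenest} with at most $j_2+1$ spatial derivatives, the top one being $r_0^{6j_2}(1/\partial_\chi r_0)^2(\partial_\phi^{j_1}\partial_\chi^{j_2+1}r_1)^2\sim r_0^{6(j_2+1)-2}(\partial_\phi^{j_1}\partial_\chi^{j_2+1}r_1)^2$, which matches exactly; the same holds for $\omega_1$, and then for $\rho_1=-(2\rho_0-1)\psi_1$, $n_1$ and $m_1$ through \eqref{restlinvar} and \eqref{restlinvar:m}. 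The main obstacle is the middle step: performing the degenerate elliptic inversion near $r_0=0$ with \emph{no loss} of $r_0$-weight at each order, while simultaneously controlling the boundary contributions at $\chi=B$ and the proliferating commutator terms — this is precisely where the precise asymptotics of Lemma~\ref{lemma:stars:asymptotics} and the sign structure exhibited in \eqref{domcoeffr1} are indispensable.
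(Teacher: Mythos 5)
Your skeleton is right in two of its three pieces: commuting with $\partial_\phi$ is indeed free because the coefficients of \eqref{linHessreq6} and of the boundary relation \eqref{bdcondr1} are $\phi$-independent, which settles all summands with $j_2\le 1$; and \eqref{highenest2} does follow from \eqref{highenest} by Leibniz applied to \eqref{restlinvar} with the asymptotics of Lemma~\ref{lemma:stars:asymptotics}, essentially as you say. The gap is in the middle step --- the one you yourself flag as ``the main obstacle'' rather than carry out --- and the mechanism you describe for it would not work. You propose to recover $\partial_\chi r_1$ and then $\partial_\chi^2 r_1$ by ``integrating the degenerate elliptic identity twice outward from the centre,'' pinning down constants of integration at $\chi=0$ and $\chi=B$. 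But integration lowers the order in $\chi$: one integration of $\partial_\chi\bigl(a_0\,\partial_\chi r_1/\partial_\chi r_0\bigr)=\dots$ returns the first-order flux $a_0\,\partial_\chi r_1/\partial_\chi r_0$, which is already controlled by the $r_0^4(\partial_\chi r_1)^2$ term of $\mathcal{E}$, and a second integration returns $r_1$ itself; neither produces $\partial_\chi^2 r_1$, and no constants of integration or boundary contributions enter the problem of gaining spatial regularity.

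The step that actually closes the induction is purely algebraic: expand the divergence in \eqref{linHessreq6} and solve \emph{pointwise} for the top spatial derivative, which with Lemma~\ref{lemma:stars:asymptotics} and \eqref{domcoeffr1}--\eqref{lowcoeffr1} gives $\partial_\chi^2 r_1 = O(r_0^{-4})\partial^2_\phi r_1+O(r_0^{-3})\partial_\chi r_1+O(r^{-6}_0)r_1$, with coefficients analytic in $r_0$, independent of $\phi$, and losing $r_0^{-3}$ under each further $\partial_\chi$. Commuting this identity with $\partial_\chi^{j_2-1}\partial_\phi^{j_1}$ and taking weighted $L^2$ norms bounds $\int_0^B r_0^{6(j_2+1)-2}(\partial_\phi^{j_1}\partial_\chi^{j_2+1}r_1)^2\,d\chi$ by lower-order summands of the energy (indeed with room to spare in the $r_0$-weights of the $\partial_\phi^2$- and zeroth-order terms), completing a finite induction on $j_2$; no sign structure, integrating factor, or boundary term is needed beyond the base case already supplied by Proposition~\ref{prop:linenest}. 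Note also a bookkeeping slip: $a_0=(1-2m_0/r_0)/\partial_\chi r_0\sim r_0^2$, not $r_0^4$; it is the coefficient $a_0/\partial_\chi r_0\sim r_0^4$ of $\partial_\chi^2 r_1$ that produces the $O(r_0^{-4})$ above, and the gain of $r_0^6$ per spatial derivative in \eqref{highenest} comes from this together with the extra $r_0^{-2}$ lost each time a $\partial_\chi$ falls on a coefficient.
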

\begin{proof}
We will only derive \eqref{highenest}. The second estimate follows easily by applying the first one to \eqref{restlinvar}. 

Moreover, the case $j_2\leq1$ follows immediately from the previous proposition by commuting the equation \eqref{linHessreq6} with $\partial_\phi^{i}$, since the coefficients of the wave equation are independent of $\phi$. On the other hand, solving for $\partial_\chi^2r_1$ in \eqref{linHessreq6} and using \eqref{domcoeffr1}-\eqref{lowcoeffr1} we obtain:
\begin{align}\label{d2r/dchi2}
\partial_\chi^2r_1=O(r_0^{-4})\partial^2_\phi r_1+O(r_0^{-3})\partial_\chi r_1+O(r^{-6}_0)r_1,
\end{align}
where the functions $O(r_0^k)$ are analytic and satisfy $\partial_\chi O(r_0^k)=O(\partial_\chi r_0^k)=O(r_0^{k-1}\partial_\chi r_0)=O(r_0^{k-3})$, $\partial_\phi O(r_0^k)=0$, see also Lemma \ref{lemma:stars:asymptotics}. Thus, we can derive the higher order energy estimate \eqref{highenest} for each summand inductively in $j_2\ge2$. Indeed, assume the corresponding estimate holds true for all summands up to a fixed number $j_2$ of $\partial_\chi$ derivatives (and all $j_1$). Commuting \eqref{d2r/dchi2} with $\partial_\chi^{j_2-1}$ we have
\begin{align}\label{commd2r/dchi2}
\partial_\chi^{j_2+1}r_1=\sum_{k=0}^{j_2-1}\bigg[O(r_0^{-4-3k})\partial^2_\phi\partial_{\chi}^{j_2-1-k} r_1+O(r_0^{-3-3k})\partial_\chi^{j_2-k} r_1+O(r^{-6-3k}_0)\partial_{\chi}^{j_2-1}r_1\bigg]
\end{align}
Hence, we obtain 
\begin{multline}\label{highdchienest}
\int^B_0 r^{6j_2+4}_0(\partial_\chi^{j_2+1}r_1)^2d\chi\leq 
\, C_{j_2}\sum_{k=0}^{j_2-1}\bigg[\|r^{3(j_2-1-k)+1}_0\partial_\phi^2\partial_{\chi}^{j_2-1-k}r_1\|_{L^2_\chi([0,B])}\\
+\|r^{3(j_2-k)-1}_0\partial_{\chi}^{j_2-k}r_1\|_{L^2([0,B]_\chi)}+\|r^{3(j_2-1-k)-1}_0\partial_{\chi}^{j_2-1-k}r_1\|_{L^2([0,B]_\chi)}\bigg]
\leq\,C_{j_2}\mathcal{E}(0)
\end{multline}
Note that only the second term  is at the level of \eqref{highenest}, whereas the first and third terms have smaller weights in $r_0$ than they could afford. Finally, commuting \eqref{commd2r/dchi2} with $\partial_\phi^{j_1}$ and repeating the above argument, we complete the finite induction proof of \eqref{highenest}. 
\end{proof}
In the following section we exhibit the existence of time-periodic solutions to the linearised system.
This confirms that the above energy \emph{boundedness} is indeed the best estimate for the linearised sytem that one can hope for in this setting.

\subsection{Periodic solutions}

In order to construct periodic solutions to the linearised equations (\ref{linHessreq},\ref{restlinvar}), it suffices to examine the wave equation satisfied by $r_1$, which as we showed above can be reduced to \eqref{linHessreq6}. We rewrite the latter using the spatial derivative $\partial_{r_0}$ instead of $\partial_\chi$:
\begin{subequations}\label{linHessreq7}
  \begin{equation}
    -\partial_\phi^2r_1 = H\,r_1
  \end{equation}
\begin{align}\label{eq:H:r1}
\notag H\,r_1:=&-\frac{e^{-\int^\chi_0fd\chi'}}{2\rho_0-1}\partial_\chi r_0\frac{\partial}{\partial{r_0}}\bigg[e^{\int^\chi_0fd\chi'}\frac{1-\frac{2m_0}{r_0}}{\partial_\chi r_0}\frac{\partial r_1}{\partial r_0} \bigg]\\
&-\frac{1}{2\rho_0-1}\bigg[2\frac{\partial_\chi\psi_0}{\partial_\chi r_0}\Bigl(\frac{m_0}{r^2_0}+4\pi r_0(\rho_0-1)\Bigr)+2\frac{m_0}{r^3_0}\\
\notag&+4\pi r_0(2\rho_0-1)\Bigl(\frac{2}{r_0}-\frac{\partial_\chi\psi_0}{\partial_\chi r_0}\Bigr)
+\frac{1-\frac{2m_0}{r_0}}{\partial_\chi r_0}\partial_\chi\Bigl(\frac{2}{r_0}-\frac{\partial_\chi\psi_0}{\partial_\chi r_0}\Bigr)\bigg]r_1
\end{align}
\end{subequations}
We first observe that in the small star regime, $H$ is a perturbation of
\begin{equation}\label{H_0}
H_0:=-\frac{1}{4\pi r_0^2}\partial_{r_0}(4\pi r_0^2\partial_{r_0})+\frac{2}{r_0^2}=-\partial_{r_0}^2-\frac{2}{r_0}\partial_{r_0}+\frac{2}{r_0^2}
\end{equation}

\begin{lemma}\label{lemma:H}
For small stars, we can decompose
\begin{subequations}
\begin{equation}\label{Hdec}
Hr_1=H_0r_1+H_1r_1,
\end{equation}
where $H_0$ is given by \eqref{H_0}, and 
\begin{equation}\label{H_1}
H_1=O(R_0^2)\partial_{r_0}^2+\frac{O(R_0^2)}{r_0}\partial_{r_0}+O(1)\,.
\end{equation}
\end{subequations}
\end{lemma}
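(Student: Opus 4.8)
The plan is to verify the claimed decomposition $H = H_0 + H_1$ by isolating, term by term in the expression \eqref{eq:H:r1} for $H r_1$, the leading contribution in the small-radius regime and showing the remainder has the structure \eqref{H_1}. The tool throughout is Lemma~\ref{lemma:stars:asymptotics}: for small stars $2\rho_0 - 1 = 1 + O(R_0^2 - r_0^2)$, $1 - 2m_0/r_0 = 1 + O(R_0^2)$, $\partial_\chi r_0 = (4\pi r_0^2)^{-1}(1 + O(R_0^2)) + O(1)$, and $\partial_\chi \psi_0 / \partial_\chi r_0 \sim r_0$ with $|\partial_\chi(\partial_\chi\psi_0/\partial_\chi r_0)| \lesssim r_0^{-2}$, together with the corresponding bounds on $\chi$-derivatives of these remainders, which let us freely pass between $\partial_\chi$ and $\partial_{r_0} = (\partial_\chi r_0)^{-1}\partial_\chi$.

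First I would treat the principal (second-order) term
$$-\frac{e^{-\int_0^\chi f}}{2\rho_0-1}\,\partial_\chi r_0\,\frac{\partial}{\partial r_0}\!\left[e^{\int_0^\chi f}\,\frac{1-\frac{2m_0}{r_0}}{\partial_\chi r_0}\,\frac{\partial r_1}{\partial r_0}\right].$$
As noted already in the proof of Proposition~\ref{prop:linenest}, $f \sim r_0^{-1}$ is integrable and hence $e^{\int_0^\chi f} = 1 + O(R_0^{?})$ — more precisely bounded above and below by constants close to $1$, with $\partial_{r_0}$ of the exponent of size $O(1)$ relative to the leading scale. Using $\partial_\chi r_0 \cdot \partial_{r_0} = \partial_\chi$, the bracket equals $e^{\int f}(1 - 2m_0/r_0)(\partial_\chi r_0)^{-1}\partial_{r_0} r_1$, and since $(\partial_\chi r_0)^{-1} = 4\pi r_0^2 (1 + O(R_0^2)) + \dots$ one recognizes $4\pi r_0^2 \partial_{r_0} r_1$ as the leading factor, with all prefactors $e^{\int f}$, $(2\rho_0-1)^{-1}$, $(1 - 2m_0/r_0)$ contributing $1 + O(R_0^2)$. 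Carrying out the outer $\partial_{r_0}$ and multiplying by $\partial_\chi r_0 = (4\pi r_0^2)^{-1}(1 + O(R_0^2))$ then reproduces $-(4\pi r_0^2)^{-1}\partial_{r_0}(4\pi r_0^2 \partial_{r_0} r_1)$ up to terms where a derivative lands on one of the $O(R_0^2)$ factors (producing $O(R_0^2)\partial_{r_0}^2 r_1$ or $O(R_0^2) r_0^{-1}\partial_{r_0} r_1$) or where a lower-order factor is retained (producing at worst $O(1)\partial_{r_0} r_1$, absorbed into the $O(1)$ of \eqref{H_1} once one notes $\partial_{r_0} r_1$ is itself lower order — or more carefully, tracking that the genuinely $r_0^{-1}$-singular part of the coefficient of $\partial_{r_0}r_1$ is exactly $2/r_0$ with the correction being $O(R_0^2)/r_0$). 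This is the step I expect to require the most bookkeeping and is the main obstacle: one must confirm that no term of size $r_0^{-1}\partial_{r_0} r_1$ with an $O(1)$ (rather than $O(R_0^2)$) coefficient survives beyond the $2/r_0$ already present in $H_0$, which is what forces the middle coefficient in \eqref{H_1} to be $O(R_0^2)/r_0$ rather than merely $O(1)/r_0$.

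For the zeroth-order term, the bracket multiplying $r_1$ in \eqref{eq:H:r1} is $2(\partial_\chi\psi_0/\partial_\chi r_0)(m_0/r_0^2 + 4\pi r_0(\rho_0-1)) + 2m_0/r_0^3 + 4\pi r_0(2\rho_0-1)(2/r_0 - \partial_\chi\psi_0/\partial_\chi r_0) + \frac{1-2m_0/r_0}{\partial_\chi r_0}\partial_\chi(2/r_0 - \partial_\chi\psi_0/\partial_\chi r_0)$, all divided by $2\rho_0 - 1$. Using $m_0 \sim r_0^3$, $\rho_0 - 1 = O(R_0^2 - r_0^2)$, $\partial_\chi\psi_0/\partial_\chi r_0 \sim r_0$, the first two summands are $O(1)$; in the third, $4\pi r_0 \cdot 2/r_0 = 8\pi = O(1)$ while $4\pi r_0 \cdot \partial_\chi\psi_0/\partial_\chi r_0 = O(r_0^2) = O(R_0^2)$; and in the fourth, $\frac{1-2m_0/r_0}{\partial_\chi r_0}\partial_\chi(2/r_0) \sim -2/r_0^2$ is the dominant piece already identified in \eqref{domcoeffr1}, while $\frac{1-2m_0/r_0}{\partial_\chi r_0}\partial_\chi(\partial_\chi\psi_0/\partial_\chi r_0) = O(r_0^{-2})\cdot O(1)\cdot\dots$ — here one uses $(\partial_\chi r_0)^{-1}\sim r_0^2$ and $|\partial_\chi(\partial_\chi\psi_0/\partial_\chi r_0)|\lesssim r_0^{-2}$ from \eqref{lowcoeffr1} to get a bound, and must check this product is $O(1)$ (not $O(r_0^{-2})$) so that it contributes only to the $O(1)$ part of $H_1$. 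But $H_0$ contributes $+2/r_0^2$, so the $-2/r_0^2$ from $\frac{1-2m_0/r_0}{\partial_\chi r_0}\partial_\chi(2/r_0)$ does not cancel it — rather one must recompute: $H_0 = -\partial_{r_0}^2 - (2/r_0)\partial_{r_0} + 2/r_0^2$, and the sign of the $2/r_0^2$ potential in $H_0$ is positive, so I would double-check the claim \eqref{domcoeffr1} against \eqref{H_0} and conclude the remaining zeroth-order coefficient, after subtracting $2/r_0^2$, is $O(1)$ — consistent with \eqref{H_1}. Finally, the prefactor $(2\rho_0-1)^{-1} = 1 + O(R_0^2 - r_0^2)$ distributes over all of this, turning the leading $\partial_{r_0}^2$, $r_0^{-1}\partial_{r_0}$, $r_0^{-2}$ terms into themselves plus $O(R_0^2)$-weighted copies, which is exactly the asserted form. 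Assembling the principal-term analysis and the zeroth-order analysis, and collecting all error contributions into the three bins $O(R_0^2)\partial_{r_0}^2$, $O(R_0^2)r_0^{-1}\partial_{r_0}$, $O(1)$, completes the proof of Lemma~\ref{lemma:H}.
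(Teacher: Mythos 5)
Your proposal is correct and follows essentially the same route as the paper, whose proof of Lemma~\ref{lemma:H} consists precisely of substituting the asymptotics of Lemma~\ref{lemma:stars:asymptotics} (together with \eqref{domcoeffr1}--\eqref{lowcoeffr1}) into \eqref{eq:H:r1} and collecting terms; your write-up just makes the bookkeeping explicit, including the key points that $e^{\int_0^\chi f}=1+O(R_0^2)$ and that derivatives landing on the $1+O(R_0^2)$ prefactors produce only $O(R_0^2)\partial_{r_0}^2$ and $O(R_0^2)r_0^{-1}\partial_{r_0}$ corrections. The one residual looseness --- that the factor $(2\rho_0-1)^{-1}=1+O(R_0^2-r_0^2)$ acting on the $2/r_0^2$ potential leaves a term of size $O(R_0^2)/r_0^2$ rather than literally $O(1)$ --- is shared with the lemma's own statement and is acknowledged by the remark following it, so it is not a defect of your argument relative to the paper's.
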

\begin{proof}
  This follows by substituting the values from  Lemma~\ref{lemma:stars:asymptotics} in \eqref{eq:H:r1}, see also \eqref{domcoeffr1}-\eqref{lowcoeffr1}.
\end{proof}
\begin{rem}
  Note that for small stars, the leading order potential term is included in $H_0$, and the sign of the corresponding term in $H_1$ is irrelevant here. The former arises from \eqref{eq:H:r1} as $-(\partial_\chi r_0)^{-1}\partial_\chi(2/r_0)=2/r_0^2$.
\end{rem}

We begin by making the ansatz $r_1=e^{i\sqrt{\lambda} \phi}h(r_0)$. Then \eqref{linHessreq7} yields the eigenvalue problem:
\begin{align}\label{linperiodeq}
Hh=\lambda h
\end{align}
In the following we restrict ourselves to \emph{regular} solutions satisfying
\begin{equation}\label{linreg}
h(0)=0,\qquad \partial_{r_0}h(0)=1\,.
\end{equation}
\begin{rem}
This is justified by the requirement that $r(r_0)=r_0+r_1(r_0)$ gives rise to a spherically symmetric metric \eqref{eq:metric:com} which is $\mathrm{C}^1$ regular at the center, which implies $r(0)=0$, and $\partial_{r_0} r(0)=1$. 
\end{rem}

Next we observe that $H$ is a positive operator, which tells us that all eigenvalues are positive and real.
We omit here a formal definition of $H$ between appropriate Sobolev spaces, but the boundary conditions \eqref{bdcondr1} are of course essential here.
\begin{lemma}\label{lemma:lambda:pos}
  Let $h$ be a solution to \eqref{linperiodeq} satisfying the boundary conditions \eqref{linreg} and \eqref{bdcondh}. Then $\lambda\in(0,+\infty)$.
\end{lemma}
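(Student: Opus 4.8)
The plan is to realise $H$ --- with domain restricted by the regularity condition \eqref{linreg} at the centre and the mixed boundary condition \eqref{bdcondh} at $\chi=B$ --- as a positive, symmetric Sturm--Liouville operator relative to a suitable weighted $\mathrm{L}^2$ inner product, and then to read off the sign and reality of $\lambda$ from the energy identity obtained by pairing \eqref{linperiodeq} with $h$.

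First I would put the eigenvalue equation in divergence form. Writing $r_1=e^{i\sqrt{\lambda}\phi}h(r_0)$ in \eqref{linHessreq6}, so that $\partial_\phi^2r_1=-\lambda h$, the equation $Hh=\lambda h$ is equivalent to
\begin{equation*}
  -\partial_\chi\!\Bigl[e^{\int_0^\chi f\,\ud\chi'}\,\frac{1-\tfrac{2m_0}{r_0}}{(\partial_\chi r_0)^2}\,\partial_\chi h\Bigr]-e^{\int_0^\chi f\,\ud\chi'}\,V_0\,h=\lambda\,w\,h,\qquad w:=e^{\int_0^\chi f\,\ud\chi'}\bigl(2\rho_0-1\bigr),
\end{equation*}
where $V_0$ denotes the coefficient of $r_1$ on the right-hand side of \eqref{linHessreq6}. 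Multiplying by $h$, integrating over $[0,B]$ and integrating by parts yields
\begin{equation*}
  \int_0^B\Bigl[e^{\int_0^\chi f\,\ud\chi'}\frac{1-\tfrac{2m_0}{r_0}}{(\partial_\chi r_0)^2}(\partial_\chi h)^2-e^{\int_0^\chi f\,\ud\chi'}V_0\,h^2\Bigr]\ud\chi-\Bigl[e^{\int_0^\chi f\,\ud\chi'}\frac{1-\tfrac{2m_0}{r_0}}{(\partial_\chi r_0)^2}\,h\,\partial_\chi h\Bigr]_{\chi=0}^{\chi=B}=\lambda\int_0^B w\,h^2\,\ud\chi.
\end{equation*}
Since $\rho_0\ge1$ forces $w\ge1$, and Lemma~\ref{lemma:stars:asymptotics} gives $e^{\int_0^\chi f\,\ud\chi'}\sim1$, the right-hand side equals $\lambda$ times a strictly positive quantity, so it suffices to show the left-hand side is strictly positive whenever $h\not\equiv0$.

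For the bulk integral I would use Lemma~\ref{lemma:H} together with \eqref{domcoeffr1}--\eqref{lowcoeffr1}: the dominant part of $-V_0$ is $-\tfrac{1-2m_0/r_0}{\partial_\chi r_0}\partial_\chi(2/r_0)\sim2/r_0^2>0$, all remaining contributions to $V_0$ being $O(1)$, whereas $e^{\int_0^\chi f\,\ud\chi'}\tfrac{1-2m_0/r_0}{(\partial_\chi r_0)^2}\sim r_0^4>0$; hence for $R_0$ small the bulk quadratic form is bounded below by $c\int_0^B\bigl(h^2/r_0^2+r_0^4(\partial_\chi h)^2\bigr)\ud\chi$ with $c>0$ --- the very coercivity already exploited in Proposition~\ref{prop:linenest}. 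For the boundary terms: at $\chi=0$ the regularity \eqref{linreg} gives $h\to0$, $\partial_\chi h/\partial_\chi r_0=\partial_{r_0}h\to1$ and $(\partial_\chi r_0)^{-1}\sim r_0^2\to0$, so this endpoint contributes nothing; at $\chi=B$ the condition \eqref{bdcondh} (the same relation as \eqref{bdcondr1}, $\partial_\chi h=(\partial_\chi\psi_0/\partial_\chi r_0-2/r_0)h$) converts the term into $-e^{\int_0^Bf\,\ud\chi'}\tfrac{1-2m_0/r_0}{(\partial_\chi r_0)^2}\bigl(\tfrac{\partial_\chi\psi_0}{\partial_\chi r_0}-\tfrac{2}{r_0}\bigr)h^2\big|_{\chi=B}$, which is $\ge0$ because $1-2m_0/r_0>0$ and $\tfrac{\partial_\chi\psi_0}{\partial_\chi r_0}-\tfrac{2}{r_0}\sim-2/r_0<0$ by \eqref{lowcoeffr1}. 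Consequently the left-hand side is $\ge c\int_0^B\bigl(h^2/r_0^2+r_0^4(\partial_\chi h)^2\bigr)\ud\chi$, which is $>0$ unless $h\equiv0$. Since the displayed identity then exhibits $\lambda$ as the quotient of this real, strictly positive quantity by the real, strictly positive number $\int_0^B w\,h^2\,\ud\chi$, and an eigenfunction is non-trivial by \eqref{linreg}, we conclude $\lambda\in(0,+\infty)$ (reality being automatic from the Rayleigh quotient, or equivalently from the symmetry of $H$ relative to $\langle f,g\rangle_w:=\int_0^B w\,f\,g\,\ud\chi$ under \eqref{linreg}, \eqref{bdcondh}).

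The main obstacle is not the manipulation of this energy identity but the uniform bookkeeping of signs and weights near the two endpoints: one must check that all the lower-order corrections in $V_0$ --- equivalently those collected in the operator $H_1$ of Lemma~\ref{lemma:H} --- are genuinely dominated, uniformly on $[0,B]$, by the $2/r_0^2$ potential and the $r_0^4(\partial_\chi h)^2$ gradient term, so that no spurious negative contribution survives, and that the boundary term at $\chi=B$ truly carries the favourable sign dictated by \eqref{bdcondh}. This is the same mechanism, under the same smallness hypothesis on $R_0$, that underlies the energy estimate of Proposition~\ref{prop:linenest}.
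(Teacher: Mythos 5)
Your argument is correct and is essentially the paper's own proof: both establish that the quadratic form $\int h\,Hh$ is strictly positive by integrating by parts, using the dominant $2/r_0^2$ potential together with the favourable sign of the boundary term produced by \eqref{bdcondh} (the term $\sim R_0^3 h^2(R_0)\ge 0$), and absorbing the $O(1)$ lower-order corrections for $R_0$ small. The only difference is cosmetic --- you work in the $\chi$-variable divergence form \eqref{linHessreq6}, whereas the paper integrates $hH_0h\,4\pi r_0^2\,\ud r_0$ in the $r_0$-variable and then invokes the $H=H_0+H_1$ splitting of Lemma~\ref{lemma:H} --- and your version is, if anything, slightly more explicit about why the perturbative terms are dominated.
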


\begin{proof}
  We show that $H$ given by \eqref{eq:H:r1} is a positive operator.
First, integrating $h H_0h$ by parts on $[0,R_0]$, where $H_0$ is given by \eqref{H_0}, we find
\begin{align}
&\int^{R_0}_0h H_0h \,4\pi r_0^2\ud r_0= 4\pi\int_0^{R_0}\big[-h\partial_{r_0}(r_0^2\partial_{r_0}h)+2h^2\big]dr_0\\
\notag=&\,
4\pi\int_0^{R_0}\big[r_0^2(\partial_{r_0}h)^2+2h^2\big]dr_0-4\pi (r_0^2h\partial_{r_0}h)\big|^{R_0}_0\\
\tag{by \eqref{linreg},\eqref{bdcondh}}=&4\pi\int_0^{R_0}\big[r_0^2(\partial_{r_0}h)^2+2h^2\big]dr_0+4\pi R_0^2\big[8\pi R_0+O(R_0^3)\big]h^2(R_0)>0
\end{align}
  and this positivity property remains true  when we add the perturbation $H_1$. Thus, for small $R_0$, $\int \lambda h^2 >0$ and the conclusion follows.
\end{proof}


The possible values of $\lambda$ are then fixed by the boundary condition \eqref{bdcondr1} which reduces to
\begin{equation}\label{bdcondh}
\partial_{r_0}h=\frac{1}{\partial_{\chi}r_0}\Bigl(\frac{\partial_\chi\psi_0}{\partial_\chi r_0}-\frac{2}{r_0}\Bigr)h,\qquad\text{at $r_0=R_0$.}
\end{equation}
First, we study the leading order part of \eqref{linperiodeq}.
\begin{prop}\label{prop:periodsol}
The regular solutions to $H_0h=\lambda h$ are of the form:
\begin{equation}\label{hj}
h_j=\frac{3}{\sqrt{\lambda_j}}j_1(\sqrt{\lambda_j} r_0)\,,\qquad h(0)=0\,,\quad \partial_{r_0}h(0)=1,
\end{equation}
where $j_1(\sqrt{\lambda} r_0)$ denotes the spherical Bessel function \eqref{j1}, and $\{\lambda_j\}^\infty_1$, $\lambda_j\rightarrow+\infty$, is an increasing sequence of positive real numbers determined by the boundary condition \eqref{bdcondh}. Moreover, the smallest frequency is of the order $\sqrt{\lambda_1}\sim R_0^{-1}$, as $R_0\rightarrow0$. 
\end{prop}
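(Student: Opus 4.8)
The plan is to reduce $H_0 h = \lambda h$ to a classical ODE whose regular solution is a spherical Bessel function, and then show that the boundary condition \eqref{bdcondh} selects a discrete increasing sequence $\lambda_j \to \infty$. First I would write out the eigenvalue equation explicitly: $-\partial_{r_0}^2 h - \tfrac{2}{r_0}\partial_{r_0} h + \tfrac{2}{r_0^2} h = \lambda h$. The potential $2/r_0^2$ is exactly the centrifugal term for angular momentum $\ell = 1$, so the substitution $h(r_0) = r_0\, w(r_0)$ (or equivalently recognizing $h/r_0$ as the relevant radial function) converts this into the spherical Bessel equation of order one: $w'' + \tfrac{2}{r_0} w' + (\lambda - \tfrac{2}{r_0^2}) w = 0$, i.e. $w(r_0) = j_1(\sqrt{\lambda}\, r_0)$ up to normalization. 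One then checks that the regularity conditions \eqref{linreg}, namely $h(0) = 0$ and $\partial_{r_0} h(0) = 1$, are met precisely by the stated normalization $h_j = \tfrac{3}{\sqrt{\lambda_j}} j_1(\sqrt{\lambda_j}\, r_0)$, using the small-argument expansion $j_1(x) = \tfrac{x}{3} + O(x^3)$, which gives $h_j(0) = 0$ and $h_j'(0) = 1$ as required.

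Second, I would impose the boundary condition \eqref{bdcondh} at $r_0 = R_0$. Writing $k = \sqrt{\lambda}$, the left side involves $\partial_{r_0} h_j = 3 j_1'(k r_0)$ and the right side involves the background coefficient $\tfrac{1}{\partial_\chi r_0}\bigl(\tfrac{\partial_\chi \psi_0}{\partial_\chi r_0} - \tfrac{2}{r_0}\bigr)$, which by Lemma~\ref{lemma:stars:asymptotics} and the estimates \eqref{domcoeffr1}--\eqref{lowcoeffr1} behaves like $4\pi r_0^2 \cdot (-2/r_0)(1 + O(R_0^2)) \sim -8\pi r_0$ at $r_0 = R_0$. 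Thus the boundary condition becomes a transcendental equation of the form $j_1'(k R_0) = \bigl(-\tfrac{8\pi}{3} R_0^2 + O(R_0^4)\bigr) j_1(k R_0) / (k R_0) \cdot (\text{const})$ — schematically $F(k R_0) = 0$ for a smooth function $F$ built from $j_1$ and $j_1'$. Because $j_1(x)$ and $j_1'(x)$ are oscillatory with interlacing zeros accumulating at infinity with asymptotic spacing $\pi$, the equation $F(x) = 0$ has an increasing sequence of positive roots $x_j \to \infty$; setting $\lambda_j = (x_j/R_0)^2$ gives the claimed sequence, and $\lambda_j \to \infty$. For the smallest root: since the right-hand side coefficient is $O(R_0^2) \to 0$ as $R_0 \to 0$, the equation degenerates in the limit to $j_1'(x) = 0$ (or $j_1(x)/x = 0$ depending on how the leading behavior is packaged), whose smallest positive root is an order-one constant $x_1^\ast$; hence $x_1 = x_1^\ast + O(R_0^2)$ is bounded away from $0$ and $\infty$, giving $\sqrt{\lambda_1} = x_1/R_0 \sim R_0^{-1}$. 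The reality and positivity of the $\lambda_j$ is already guaranteed by Lemma~\ref{lemma:lambda:pos}, so no separate argument is needed there.

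The main obstacle I anticipate is making the boundary-condition analysis rigorous and uniform in $R_0$: one must verify that the coefficient $\tfrac{1}{\partial_\chi r_0}(\tfrac{\partial_\chi\psi_0}{\partial_\chi r_0} - \tfrac{2}{r_0})\big|_{r_0 = R_0}$ has the claimed magnitude with the error terms genuinely controlled by the asymptotics of Lemma~\ref{lemma:stars:asymptotics} — in particular that the subleading $O(1)$ contribution to $\partial_\chi r_0$ and the $O(r_0)$ contribution to $\partial_\chi \psi_0 / \partial_\chi r_0$ do not spoil the scaling — and then invoke a continuity/implicit-function-type argument (or a direct zero-counting via the oscillation of $j_1$) to conclude both the existence of the full sequence and the precise order of $\lambda_1$. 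A clean way to organize this is to treat $x \mapsto j_1'(x) - g(x; R_0) j_1(x)$, where $g(\cdot; R_0) \to 0$ uniformly on compact sets, observe that its zeros depend continuously on $R_0$, and use the known location and simplicity of the zeros of $j_1'$ (respectively $j_1$) to transfer the conclusion. The extension from $H_0$ to the full operator $H$ — needed for the claim that the linearised system itself admits periodic solutions — would then follow from Lemma~\ref{lemma:H} by a perturbation argument, but as stated the Proposition concerns $H_0 h = \lambda h$, so that step can be deferred.
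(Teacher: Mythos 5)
Your proposal follows essentially the same route as the paper: identify the regular solution of $H_0h=\lambda h$ as the normalized spherical Bessel function $\tfrac{3}{\sqrt{\lambda}}j_1(\sqrt{\lambda}\,r_0)$, turn the boundary condition \eqref{bdcondh} into a transcendental equation in $x=\sqrt{\lambda}\,R_0$ whose right-hand coefficient degenerates as $R_0\to0$, read off the discrete sequence $\lambda_j\to\infty$ from the oscillation of the trigonometric terms, and obtain $\sqrt{\lambda_1}\sim R_0^{-1}$ from the limiting equation $j_1'(x)=0$; this is precisely the content of \eqref{lambdaeq}--\eqref{lambdaeq3}, except that the paper excludes roots with $\sqrt{\lambda}\,R_0\to0$ by the explicit small-$\lambda$ expansion \eqref{lambdaeq2} and an exponent-matching argument rather than by your continuity argument --- and that is the one place where your sketch needs care, since the perturbation of $j_1'(x)=0$ is \emph{not} uniformly small as $x\to0$ (it behaves like $R_0^3/x$), so continuity of roots on compact sets alone does not rule out low-lying eigenvalues. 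One internal slip: the substitution $h=r_0w$ is both unnecessary and, as written, inconsistent with your own conclusion; the equation $-h''-\tfrac{2}{r_0}h'+\tfrac{2}{r_0^2}h=\lambda h$ is already the order-one spherical Bessel equation for $h$ itself in the variable $\sqrt{\lambda}\,r_0$, so $h\propto j_1(\sqrt{\lambda}\,r_0)$ directly, whereas taking your substitution at face value would yield $h=r_0\,j_1(\sqrt{\lambda}\,r_0)$, contradicting \eqref{hj} (in fact $w=h/r_0$ satisfies $w''+\tfrac{4}{r_0}w'+\lambda w=0$, not the equation you wrote). Since you verify the normalization of \eqref{hj} independently from $j_1(x)=\tfrac{x}{3}+O(x^3)$, the conclusion is unaffected.
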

\begin{proof}
Observe that the function
\begin{equation}\label{j1}
j_1(\sqrt{\lambda} r_0)=\frac{\sin(\sqrt{\lambda} r_0)}{(\sqrt{\lambda} r_0)^2}-\frac{\cos(\sqrt{\lambda} r_0)}{\sqrt{\lambda} r_0}
\end{equation}
satisfies the o.d.e.~$H_0j_1=\lambda j_1$ and $j_1(0)=0$, $\partial_{r_0}j_1(\sqrt{\lambda} r_0)\big|_{r_0=0}=\frac{\sqrt{\lambda}}{3}$. Hence, the solution $h$ that we seek equals:
\begin{equation}\label{leadhsol}
h(r_0)=\frac{3}{\sqrt{\lambda}}\bigg(\frac{\sin(\sqrt{\lambda} r_0)}{(\sqrt{\lambda} r_0)^2}-\frac{\cos(\sqrt{\lambda} r_0)}{\sqrt{\lambda} r_0}\bigg)
\end{equation}
According to the leading order expressions \eqref{leadasymb},\eqref{lowcoeffr1}, the boundary condition \eqref{bdcondh} yields the following equation for $\lambda$:
\begin{equation}\label{lambdaeq}
\bigl[2-8\pi R_0^2+O(R^4_0)\bigr]\bigg[\frac{\cos(\sqrt{\lambda} R_0)}{\sqrt{\lambda} R_0^2}
-\frac{\sin(\sqrt{\lambda} R_0)}{\lambda R_0^3}\bigg]
+\frac{\sin(\sqrt{\lambda} R_0)}{ R_0}=0
\end{equation}
It is evident that for large values of $\lambda\in(0,+\infty)$, holding $R_0$ fixed, the first two terms in \eqref{lambdaeq} become negligible compared to the last one. 
Thus, there exists a discrete set of zeros $\{\lambda_j\}^\infty_0$ of \eqref{lambdaeq} tending to infinity. Also, for small values of $\lambda\ge0$, $R_0$ being fixed and small, the equation \eqref{lambdaeq} to leading order reads:
\begin{equation}\label{lambdaeq2}
\bigl[2-8\pi R_0^2+O(R^4_0)\bigr]\Bigl(-\frac{1}{3}\sqrt{\lambda}+O(\lambda^\frac{3}{2}R_0^2)\Bigr)+\sqrt{\lambda}+O(\lambda R_0^3)=0
\end{equation}
or simply $\sqrt{\lambda}/3+O(\sqrt{\lambda} R_0^2)=0$, which cannot be satisfied for $R_0$ sufficiently small.

The last part of the proposition is a question of computing the first term in the expansion of $\lambda_1:=\lambda_1(R_0)$.
We search for the leading order exponent $a$ of $\sqrt{\lambda_1} R_0\sim R_0^{a}$, as $R_0\rightarrow0$. For $a=0$, all terms in \eqref{lambdaeq} are of the same order and hence a solution $\lambda$ with that leading order exists. In order to show that it corresponds to  $\lambda_1$, it suffices to argue that a lower order with $a>0$ is not possible. Indeed, this is the case, since for $a>0$ \eqref{lambdaeq2} is still valid and it yields:
\begin{align}\label{lambdaeq3}
\frac{\sqrt{\lambda_1}}{3}+O(R_0^{-1+3a})+O(R_0^{1+a})+O(R_0^{1+2a})=0,
\end{align}
which also does not admit a solution of the form  $\sqrt{\lambda_1}\sim R_0^{-1+a}$, $a>0$, as $R_0\rightarrow0$.
\end{proof}  
The existence of periodic solutions to the original wave equation \eqref{linHessreq7} for $r_1$ now follows by perturbation theory; see e.g.~\S13~in~\cite{Courant:31}, and recall also Lemma~\ref{lemma:lambda:pos}. 
\begin{cor}\label{cor:periodsol}
There exist periodic solutions to \eqref{linHessreq7}, $r_{1j}=e^{i\sqrt{\tilde{\lambda}_j}\phi}\tilde{h}_j(r_0)$, for
a discrete set $\{\tilde{\lambda}_j\}^\infty_0\subset(0,+\infty)$, and $\tilde{h}_j$ satisfying $\tilde{h}_j(0)=0$, $\partial_{r_0}\tilde{h}_j(0)=1$, and \eqref{bdcondh}. Moreover, $\tilde{\lambda}_j=\lambda_j+\mathcal{O}(R_0^2)$, where the $\lambda_j$'s are given by Proposition \ref{prop:periodsol}.
\end{cor}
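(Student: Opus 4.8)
The plan is to realise the operator $H$ of \eqref{eq:H:r1} as a self-adjoint positive operator with discrete spectrum, and then to locate its eigenvalues near those of the model operator $H_0$ exactly as the zeros of \eqref{lambdaeq} were analysed, only with the background coefficients no longer frozen at their leading order. Concretely, on $L^2\bigl([0,R_0],4\pi r_0^2\,\ud r_0\bigr)$ I would consider the quadratic form obtained by the integration by parts carried out in the proof of Lemma~\ref{lemma:lambda:pos}, with form domain prescribed by the regularity condition $h(0)=0$ at the centre and the Robin condition \eqref{bdcondh} at $r_0=R_0$. That computation shows the form is positive-definite for $R_0$ small; since the interval is bounded and $H$ is uniformly elliptic on it, the associated self-adjoint operator has compact resolvent, hence purely discrete spectrum $0<\tilde\lambda_0<\tilde\lambda_1<\cdots\to+\infty$ with one-dimensional eigenspaces by Sturm--Liouville theory. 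For each $j$, setting $r_{1j}=e^{i\sqrt{\tilde\lambda_j}\phi}\tilde h_j(r_0)$ with $\tilde h_j$ the corresponding eigenfunction solves \eqref{linHessreq7} and satisfies \eqref{bdcondh}; it then remains to verify the regularity at the centre and to pin down the $\tilde\lambda_j$.

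For the regularity, note that by Lemma~\ref{lemma:H} the eigenvalue equation $Hh=\lambda h$ is a second-order linear ODE with a regular singular point at $r_0=0$ whose indicial equation coincides with that of $H_0$, namely $s^2+s-2=(s-1)(s+2)=0$; hence, up to scaling, there is a unique solution regular at the origin, with leading behaviour $h\sim r_0$, and normalising it gives $\tilde h_j(0)=0$, $\partial_{r_0}\tilde h_j(0)=1$ as required by \eqref{linreg}. Since $\tilde\lambda_j$ is real by Lemma~\ref{lemma:lambda:pos} and the ODE has real coefficients, $\tilde h_j$ may be taken real-valued, so $r(r_0)=r_0+\tilde h_j(r_0)$ indeed defines a metric \eqref{eq:metric:com} that is $\mathrm C^1$ at $\Gamma$.

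To locate the eigenvalues I would run a homotopy. By the decomposition \eqref{Hdec}--\eqref{H_1}, the second- and first-order parts of $H_1$ are $O(R_0^2)$ relative to the corresponding parts of $H_0$, while the zeroth-order part is only $O(1)$; but since all relevant model frequencies satisfy $\sqrt{\lambda_j}\gtrsim R_0^{-1}$ by Proposition~\ref{prop:periodsol}, this term too is $H_0$-form-bounded with relative bound $O(R_0^2)$, and likewise the difference between \eqref{bdcondh} and its leading-order version \eqref{lambdaeq} is an $O(R_0^2)$ perturbation of the boundary form. Thus $\{H_0+tH_1\}_{t\in[0,1]}$ is a holomorphic family of type (B) with uniformly small perturbation; the $j$-th eigenvalue depends real-analytically on $t$ and, by the min--max principle, stays within a factor $1+O(R_0^2)$ of $\lambda_j$ throughout, and since the $\lambda_j$ are simple no crossings occur, so at $t=1$ one obtains $\tilde\lambda_j=\lambda_j+O(R_0^2)$ with $\tilde h_j\to h_j$. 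Equivalently, and more elementarily, one can argue by shooting: writing $\mathcal F(\lambda)$ for the left- minus the right-hand side of \eqref{bdcondh} evaluated at $r_0=R_0$ on the regular solution of $Hh=\lambda h$, the zeros of $\mathcal F$ are precisely the $\tilde\lambda_j$; continuous dependence of ODE solutions on their coefficients makes $\mathcal F$ an $O(R_0^2)$ perturbation of the model function in \eqref{lambdaeq} on compact $\lambda$-intervals, and since the latter's zeros are simple, Rouch\'e's theorem (or the implicit function theorem) perturbs each $\lambda_j$ to a unique nearby zero $\tilde\lambda_j=\lambda_j+O(R_0^2)$.

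The main obstacle is the bookkeeping that makes ``$H_1$ is a small perturbation of $H_0$'' precise: one must use that the relevant frequencies live at the scale $\sqrt{\lambda_j}\gtrsim R_0^{-1}$ so that the $O(1)$ zeroth-order term in \eqref{H_1} is genuinely subordinate, and one must check that the background-dependent boundary condition \eqref{bdcondh} does not spoil the stability of the form domain. Controlling the perturbation uniformly as $j\to\infty$ (the rapidly oscillating regime) would require WKB-type estimates, but this is not needed here: the abstract self-adjoint framework already supplies discreteness and $\tilde\lambda_j\to+\infty$, and the assertion only concerns the proximity of the family $\{\tilde\lambda_j\}$ to the explicitly computed frequencies $\{\lambda_j\}$ of Proposition~\ref{prop:periodsol}.
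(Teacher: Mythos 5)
Your proposal is correct and follows the same route as the paper, which at this point simply invokes eigenvalue perturbation theory (citing Courant--Hilbert, \S13) together with the positivity of $H$ from Lemma~\ref{lemma:lambda:pos} and the explicit leading-order spectrum of Proposition~\ref{prop:periodsol}; you have merely supplied the details (the self-adjoint realization via the quadratic form, the Frobenius analysis at the centre, and the homotopy/shooting localisation of the eigenvalues) that the paper delegates to the reference. Your reading of the error as relative, $\tilde{\lambda}_j=\lambda_j\bigl(1+\mathcal{O}(R_0^2)\bigr)$, is the natural one, since the coefficients of $H_1$ are only $\mathcal{O}(R_0^2)$ relative to those of $H_0$ while $\lambda_j\gtrsim R_0^{-2}$.
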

\begin{rem}
  In view of \eqref{lambdaeq}, the eigenvalues $\tilde{\lambda}_j$ are \emph{simple} for large $j\in\mathbb{N}$, 
and the eigenfunctions $\tilde{h}_j$ of the operator $H$ are $O(R_0^2)$ perturbations of the $h_j$'s given by \eqref{hj}.
\end{rem}
%
%

%
%





\providecommand{\bysame}{\leavevmode\hbox to3em{\hrulefill}\thinspace}
\providecommand{\MR}{\relax\ifhmode\unskip\space\fi MR }
\providecommand{\MRhref}[2]{%
  \href{http://www.ams.org/mathscinet-getitem?mr=#1}{#2}
}
\providecommand{\href}[2]{#2}

\bigskip
Grigorios Fournodavlos\\
{\itshape Address:} {\scshape\footnotesize LJLL, Sorbonne Universit\'e, 4 place Jussieu, 75005 Paris, France }\\
{\itshape Email:} {\ttfamily grigorios.fournodavlos@sorbonne-universite.fr}

\smallskip
Volker Schlue\\
{\itshape Address:} {\scshape\footnotesize University of Melbourne, Parkville, VIC 3010, Australia}\\
{\itshape Email:} {\ttfamily volker.schlue@unimelb.edu.au}

\end{document}